\numberwithin{equation}{section}
  \newtheorem{thm}{Theorem}[section]
  \newtheorem{lem}[thm]{Lemma}
  \newtheorem{prop}[thm]{Proposition}
  \newtheorem{cor}[thm]{Corollary}
  \theoremstyle{definition}
  \newtheorem{defn}[thm]{Definition}
  \newtheorem{exm}[thm]{Example}
  \newtheorem{rmk}[thm]{Remark}
  \newtheorem{open}[thm]{Problem}
 \newcommand\ra{\rightarrow}
\newcommand{\lex}{\,\overrightarrow{\times}\,}
 \newcommand\s{\subseteq}
 \newcommand\B{\mathrm{B}}
\newcommand{\id}{\mbox{\rm Id}}
\newcommand\spec{\mathrm{Spec}}
\newcommand\C{\mathrm{C}}
 \numberwithin{equation}{section}
 \def\iff{if and only if }
\def\im{ \mathrm{Im} }
\let\Right\right
\let\Left\left
\def\right#1{\Right#1\@ifnextchar){\!\right}{}}
\def\left#1{\Left#1\@ifnextchar({\!\left}{}}
\begin{document}
\title[Representation and Embedding of Pseudo MV-algebras with Square Roots I]{Representation and Embedding of Pseudo MV-algebras with Square Roots I. Strict Square Roots}

\author[Anatolij Dvure\v{c}enskij and Omid Zahiri]{Anatolij Dvure\v{c}enskij$^{^{1,2,3}}$, Omid Zahiri$^{^{1,*}}$}

\date{}%
\thanks{The paper acknowledges the support by the grant of
the Slovak Research and Development Agency under contract APVV-20-0069  and the grant VEGA No. 2/0142/20 SAV,  A.D}
\thanks{The project was also funded by the European Union's Horizon 2020 Research and Innovation Programme on the basis of the Grant Agreement under the Marie Sk\l odowska-Curie funding scheme No. 945478 - SASPRO 2, project 1048/01/01,  O.Z}
\thanks{* Corresponding Author: Omid Zahiri}
\address{$^1$Mathematical Institute, Slovak Academy of Sciences, \v{S}tef\'anikova 49, SK-814 73 Bratislava, Slovakia}
\address{$^2$Palack\' y University Olomouc, Faculty of Sciences, t\v r. 17. listopadu 12, CZ-771 46 Olomouc, Czech Republic}
\address{$^3$Depart. Math., Constantine the Philosopher University in Nitra, Tr. A. Hlinku 1, SK-949 01 Nitra, Slovakia}
\email{dvurecen@mat.savba.sk, zahiri@protonmail.com}
\thanks{}
%\email{\href{mailto:dvurecen@mat.savba.sk}{dvurecen@mat.savba.sk},
%\href{mailto:zahiri@protonmail.com}{zahiri@protonmail.com}}

\keywords{Pseudo MV-algebra, unital $\ell$-group, symmetric pseudo MV-algebra, square root, strict square root, square root closure, two-divisibility, divisibility, embedding}
\subjclass[2020]{06C15, 06D35}

%\maketitle

\begin{abstract}
In \cite{DvZa3}, we started the investigation of pseudo MV-algebras with square roots. In the present paper, we continue to study the structure of pseudo MV-algebras with square roots focusing on their new characterizations. The paper is divided into two parts. In the present first part, we investigate the relationship between a pseudo MV-algebra with square root and its corresponding unital $\ell$-group in the scene of two-divisibility.

In the second part, we find some conditions under which a particular class of pseudo MV-algebras can be embedded into pseudo MV-algebras with square roots. We introduce and investigate the concepts of a strict square root of a pseudo MV-algebra and a square root closure, and we compare both notions. We show that each MV-algebra has a square root closure. Finally, using the square root of individual elements of a pseudo MV-algebra, we find the greatest subalgebra of a special pseudo MV-algebra with weak square root.
\end{abstract}

\date{}

\maketitle

\section{Introduction}%1

Chang \cite{Cha1,Cha2} introduced MV-algebras as the algebraic semantics of the $[0,1]$-valued \L ukasiewicz logic.
In the same sense, Boolean algebras do algebraic semantics of the classical two-valued logic.
Since then, the theory of MV-algebras has been deeply investigated and studied in the different aspects of mathematics and logic.
MV-algebras form a category that is categorically equivalent
to the category of Abelian unital $\ell$-groups. This principle result of the theory was presented by Mundici, \cite{Mun}.

More than twenty years ago, Georgescu and Iorgulescu, \cite{georgescu}, introduced a non-commutative generalization of
MV-algebras called pseudo MV-algebras.  They form an algebraic counterpart of the non-commutative \L ukasiewicz logic; see, for example, \cite{Haj}.
This concept also was independently defined by Rach\r{u}nek \cite{Rac} as generalized MV-algebras. Pseudo MV-algebras have a disjunction that is not necessarily commutative and two negations, the left and right ones, that could coincide even in non-commutative pseudo MV-algebras. We note that other generalizations of pseudo MV-algebras are, for example, pseudo EMV-algebras introduced in \cite{DvZa0,DvZa1} and generalized MV-algebras from \cite{GaTs}.

Dvure\v{c}enskij, \cite{Dvu1}, using Bosbach's notion of a semiclan, presented a categorical equivalence between the category of pseudo MV-algebras and the category of unital $\ell$-groups (not necessarily Abelian ones) that generalizes a similar result by Mundici for MV-algebras. He showed that every pseudo MV-algebra is always an interval in a unital $\ell$-group. Due to Komori \cite{Kom}, it is well-known that the lattice of subvarieties of MV-algebras is countable. In contrast, the lattice of subvarieties of pseudo MV-algebras is uncountable; see \cite{DvHo}. Hence, the structure of pseudo MV-algebras is much richer than that of MV-algebras. Important varieties of pseudo MV-algebras are the variety of symmetric pseudo MV-algebras, where the left and right negations coincide, and the variety of representable pseudo MV-algebras; they will be the playground for us where we will mainly work on the present paper.

Root operators are useful tools in studying algebraic structures with non-idempotent binary operations.
The study of the existence and uniqueness of roots (in a particular case, square root) of an element or all elements of an algebraic structure with binary operations has a long history.
Mal'cev's well-known result, \cite[Thm 7.3.2]{KoMe}, showed that the extraction of roots is unique in a torsion-free locally nilpotent group. So, the equation $x^n=y^n$ has at most one solution. Kontorovi\v{c}, \cite{Kon1,Kon2}, also studied groups with unique extraction of roots called R-groups.
Baumslag in \cite{Bau} developed the theory of groups with unique roots.

In \cite{Hol}, H{\"o}hle studied square roots on integral, commutative, residuated $\ell$-monoids, and especially for MV-algebras. He also introduced a strict square root and proposed a classification of MV-algebras with square roots. In particular, he showed that each MV-algebra with square root is exactly a Boolean algebra (the square root is an identity map), or a strict MV-algebra (the square root is strict), or isomorphic to the direct product of the first two cases. He also investigated $\sigma$-complete, complete,
divisible, locally finite, and injective MV-algebras with square roots and provided their relations.
B\v{e}lohl\'avek, \cite{Bel}, continued in the study of the square root of residuated lattices, and he
proved that the class of all residuated lattices with square roots is a variety. In \cite{Amb}, Ambrosio provided a study about 2-atomless MV-algebras and strict MV-algebras and proved that
on this structure, the concepts of strict and 2-atomless are equivalent. She also proved that each strict MV-algebra contains a copy of the MV-algebra of all rational dyadic numbers. We refer to \cite{Hol,Amb,NPM} for more details about square roots on MV-algebras.

In \cite{DvZa2}, we recently studied square roots on EMV-algebras which generalize MV-algebras. We used square roots to characterize EMV-algebras and provided a representation of EMV-algebras with square roots. Then we investigated divisible and locally complete EMV-algebras with square roots.
It was shown that each strict EMV-algebra has a top element, so it is an MV-algebra.

In the next step, \cite{DvZa3}, we initiated a deed study of square roots on a class of pseudo MV-algebras.
We introduced and investigated the notion of a square root and a weak square root on a pseudo MV-algebra.
The class of pseudo MV-algebras with square roots is equational, so it is a subvariety of
pseudo MV-algebras. We found that for each square root $r$ on a pseudo MV-algebra $M$, the element $r(0)$
plays a significant role. It helped us classify the pseudo MV-algebras class with square roots and proposed several examples. We found a relationship between
two-divisible $\ell$-groups and representable symmetric pseudo MV-algebras with strict square roots.

The present work focuses on investigating square roots, and we extend our research on pseudo MV-algebras, which was initiated in \cite{DvZa3}. The main aims of the present paper, which is divided into two parts, are:

Part I.
\begin{itemize}
\item Present new characterizations and presentations of pseudo MV-algebras with strict and non-strict square roots.
\item Show how two-divisibility is connected with the existence of square roots.
\item Investigate when the two-divisibility of a pseudo MV-algebra $M=\Gamma(G,u)$ entails the two-divisibility of $G$.
\item Study the possibility of embedding a pseudo MV-algebra into a pseudo MV-algebra with square root.
\item Characterize square square roots on strongly $(H,1)$-perfect pseudo MV-algebras.
\end{itemize}

Part II, \cite{DvZa5}.
\begin{itemize}
\item Define and study the strict square root closure of a pseudo MV-algebra.
\item Define and study the square root closure of a pseudo MV-algebra and compare it with the strict square root closure.
\item Investigate the square root of not all elements of a pseudo MV-algebra.
\item Find conditions when a maximal subalgebra exists in a pseudo MV-algebra with weak square root.
\end{itemize}

The paper is organized as follows. Part I. Section 2 gathers basic definitions, properties, and results about pseudo MV-algebras and square roots that will be used in the next sections. Section 3 presents sufficient and necessary conditions under which
a pseudo MV-algebra has a strict or non-strict square root. In Section 4, the relation between a pseudo MV-algebra
$M=\Gamma(G,u)$ with a square root and the two-divisibility of the unital $\ell$-group $G$ is investigated.
If $M$ is linearly ordered or an MV-algebra with square root, then the $\ell$-group $G$ is two-divisible.
We find a sufficient and necessary condition under which a square root on a pseudo MV-algebra $\Gamma(G,u)$ implies the two-divisibility of the $\ell$-group $G$. We also characterize the lexicographic product of MV-algebras with square roots.

Part II.
In Section 5, we find an answer to the question, ``Is it possible to embed a pseudo MV-algebra in a pseudo-MV-algebra with square root?'' To answer the problem, we study a class of pseudo MV-algebras that can be embedded into a pseudo MV-algebra with strict square root. We define the concept of the strict square root closure and prove that each MV-algebra has a strict square root closure. Section 6 introduces a square root closure of an MV-algebra and compares it with the strict square root. Section 7 describes a square root of an individual element of a pseudo MV-algebra and finds the greatest subalgebra of special pseudo MV-algebras with the weak square root property. The paper contains interesting examples illustrating our results and some questions are formulated.

\section{Preliminaries}%2

In the section, we gather basic elements of $\ell$-groups, pseudo MV-algebras, and square roots.

We will use groups $(G;+,0)$ written additively. A group $G$ is {\it partially ordered} if there is a partial order $\le$ on $G$ such that $f\le g$ implies $h_1+f+h_2\le h_1+g+h_2$ for all $h_1,h_2\in G$. If $\le$ is a lattice order, $G$ is said to be a {\it lattice ordered} or an $\ell$-group. An element $u\ge 0$ is said to be a {\it strong unit} of $G$ if given $g\in G$, there is an integer $n\ge 1$ such that $g\le nu$. A couple $(G,u)$, where $u$ is a fixed strong unit of $G$, is said to be a {\it unital} $\ell$-{\it group}. If $G$ is an $\ell$-group, $\C(G)=\{g\in G\colon g+h=h+g,\ \forall h \in G\}$ is the {\it commutative center} of $G$.

For more information about$\ell$-groups, we recommend to consult with e.g., \cite{Dar,Gla,Anderson}.

\begin{defn} \cite{georgescu}
A {\it pseudo MV-algebra} is an algebra $(M;\oplus,^-,^\sim,0,1)$ of type $(2,1,1,0,0)$ such that the following axioms hold
for all $x,y,z\in M$,

\begin{itemize}
\item[{\rm (A1)}] $x\oplus(y\oplus z)=(x\oplus y)\oplus z$,

\item[{\rm (A2)}] $x\oplus 0=0\oplus x=x$,

\item[{\rm (A3)}] $x\oplus 1=1\oplus x=1$,

\item[{\rm (A4)}] $1^{-}=1^{\sim}=0$,

\item[{\rm (A5)}] $(x^{-}\oplus y^{-})^{\sim}=(x^{\sim}\oplus y^{\sim})^{-}$,

\item[{\rm (A6)}] $x\oplus (x^{\sim}\odot y)=y\oplus (y^{\sim}\odot x)=(x\odot y^{-})\oplus y=(y\odot x^{-})\oplus x$,

\item[{\rm (A7)}] $x\odot (x^{-}\oplus y)=(x\oplus y^{\sim})\odot y$,

\item[{\rm (A8)}] $(x^{-})^{\sim}=x$,
\end{itemize}
where $x\odot y=(x^{-}\oplus y^{-})^{\sim}$. If the operation $\oplus$ is commutative, equivalently $\odot$ is commutative, then $M$ is an MV-algebra. (A6) defines $x\vee y$ and (A7) $x\wedge y$.  We note that if $x^\sim=x^-$ for each $x\in M$, then $\oplus$ is not necessarily commutative. If $x^-=x^\sim$ for each $x\in M$, $M$ is said to be {\it symmetric}.
\end{defn}

We note that it can happen that $0=1$; in this case, $M$ is said to be {\it degenerate}.

For example, if $(G,u)$ is a unital $\ell$-group, then $\Gamma(G,u)=([0,u];\oplus, ^-,^\sim,0,u)$ is a pseudo MV-algebra, where $x\oplus y:=(x+y)\wedge u$, $x^-:=u-x$, and $x^\sim:= -x+u$. Moreover, due to a basic representation of pseudo MV-algebras, see \cite{Dvu1}, every pseudo MV-algebra is isomorphic to some $\Gamma(G,u)$ for a unique (up to isomorphism) unital $\ell$-group $(G,u)$. In addition, the functor $\Gamma: (G,u)\mapsto \Gamma(G,u)$ defines a categorical equivalence between the category of unital $\ell$-groups and the category of pseudo MV-algebras. For more information about the functor $\Gamma$ and its inverse $\Psi$, see \cite{Dvu1}.

According to \cite{Dvu1}, we introduce a partial operation $+$ on any pseudo MV-algebra $M$: Given $x,y\in M$, $x+y$ is defined if and only if $y\odot x=0$, and in such a case, we set $x+y:=x\oplus y$. The operation $+$ is associative, and using the $\ell$-group representation, it corresponds to the group addition in the representing unital $\ell$-group.

For any integer $n\ge 0$ and any $x\in M$, we can define
\begin{align*}
0.x &=0, \quad  \text{and}\quad n.x=(n-1).x\oplus x,\quad n\ge 1,\\
0x&=0, \quad \text{and}\quad nx= (n-1)x+x,\quad n\ge 1,
\end{align*}
assuming $(n-1)x$ and $(n-1)x+x$ are defined in $M$.
An element $x\in M$ is a {\em Boolean element} if $x\oplus x=x$. The set $\B(M)$ denotes the set of Boolean elements of $M$, which is a  subalgebra of $M$ and a Boolean algebra; it is a so-called Boolean skeleton of $M$.

In each pseudo MV-algebra $(M;\oplus,^-,^\sim,0,1)$, we can define two additional binary operations $\to$ and $\rightsquigarrow$ by
$$
x\to y:=x^-\oplus y,\quad x\rightsquigarrow y:=y\oplus x^\sim.
$$
The properties of the operations $\to$ and $\rightsquigarrow$ can be found e.g. in \cite[Prop 2.3]{DvZa3}.

Let $n\ge 2$ be an integer. A pseudo MV-algebra $M$ is said to be $n$-{\it divisible} if, given an element $y\in M$, there is an element $x\in M$ such that $nx$ exists in $M$ and $nx=y$. If $M$ is $n$-divisible for each $n\ge 1$, we say that $M$ is {\it divisible}. We note that $M$ is $n$-divisible iff given $x\in M$, there is $y\in M$ such that $n.y=x$ and $(n-1).y\odot y^-=0$.
Analogously, an $\ell$-group $G$ is $n$-{\it divisible} if given $g\in G$, there is $h\in G$ such that $nh=g$, and $G$ is {\it divisible} if it is $n$-divisible for each $n\ge 2$.
If $M=\Gamma(G,u)$ is an MV-algebra, then $M$ is $n$-divisible iff $G$ is $n$-divisible. For pseudo MV-algebras $\Gamma(G,u)$, $n$-divisibility of $G$ trivially implies $n$-divisibility of $M=\Gamma(G,u)$. The converse also holds if, e.g., $G$ is linearly ordered and $u/2\in \C(G)$; see Corollary \ref{co:strict}.

An $\ell$-group $G$ enjoys {\it unique extraction of roots} if for every integer $n\ge 1$ and  $g,h\in G$, $ng=nh$ implies $g=h$. In the same way, we say that a pseudo MV-algebra enjoys unique extraction of roots. We note that every linearly ordered group, \cite[Lem 2.1.4]{Gla}, (linearly ordered pseudo MV-algebra) enjoys unique extraction of roots. The same applies to  each representable $\ell$-group, see \cite[Page 26]{Anderson}.

\begin{rmk}\label{interval-rmk}
If $(M;\oplus,^-,^\sim,0,1)$ is a pseudo MV-algebra and $a\in M$,
then $([0,a];\oplus_a,^{-_a},^{\sim_a},0,a)$  is a pseudo MV-algebra,
where $x\oplus_a y=(x\oplus y)\wedge a$, $x^{-a}=a\odot x^-$ and $x^{\sim a}=x^\sim\odot a$.
Indeed, by \cite{Dvu1}, we can assume that $M=\Gamma(G,u)$, where $(G,u)$ is a unital $\ell$-group.
For each $a\in M$ and each $x,y\in [0,a]$, $a\odot x^-=a-u+(u-x)\odot a=a-x=x^{-a}$,
$x^\sim\odot a=-x+u-u+a=-x+a=x^{\sim a}$ and
$(x\oplus y)\wedge a=(x+y)\wedge u\wedge a=(x\oplus_a y)$.
Therefore, by \cite[Exm 1.3]{georgescu}, $([0,a];\oplus_a,^{-a},^{\sim a},0,a)$ is a pseudo MV-algebra.
\end{rmk}

A non-empty subset $I$ of a pseudo MV-algebra $M$ is called an {\it ideal} of $M$ if (1) for each $y\in M$, $y\leq x\in I$ implies that $y\in I$; (2) $I$ is closed under $\oplus$. An ideal $I$ of $M$ is said to be (i) {\it prime} if $x\wedge y \in I$ implies $x \in I$ or $y \in I$; (ii) {\it normal} if $x\oplus I=I\oplus x$ for any $x \in M$, where $x\oplus I:=\{x\oplus i \mid i \in I\}$ and $I\oplus x =\{i\oplus x \mid i \in I\}$, (iii) {\it proper} if $I\ne M$,  (iv) {\it maximal} if $I$ is a proper ideal of $M$ and it is not properly contained in any proper ideal of $M$. We denote by $\text{MaxI}(M)$ and $\text{NormI}(M)$ the set of maximal ideals and normal ideals, respectively, of $M$. Of course, $\text{MaxI}(M)\ne \emptyset$ and $\text{NormI}(M)\ne\emptyset$, but their intersection may be empty, see, e.g., \cite{DvHo}, what for MV-algebras is impossible.

We recall that an ideal $I$ is normal \iff given $x,y \in M$,  $x\odot y^-\in I$ \iff $y^\sim \odot x \in I$, \cite[Lem 3.2]{georgescu}. For each subset $I$ of $M$, we denote $I^- =\{x^- \mid  x\in I\}$ and $I^\sim =\{x^\sim \mid  x\in I\}$.

The set of the prime ideals of $M$ is denoted by $\spec(M)$. Equivalent conditions, \cite[Thm 2.17]{georgescu}, for an ideal $I$ to be prime are as follows:
\begin{itemize}[nolistsep]
\item[{\rm (P1)}]  $x\odot y^- \in I$ or $y\odot x^-$ for all $x,y \in M$.
\item[{\rm (P2)}] $x\odot y^\sim \in I$ or $y\odot x^\sim$ for all $x,y \in M$.
\end{itemize}
A one-to-one relationship exists between congruences and normal ideals of a pseudo MV-algebra, \cite[Cor. 3.10]{georgescu}: If $I$ is a normal ideal of a pseudo MV-algebra, then the relation $\sim_I$, defined by $x\sim_I y$ \iff $x\odot y^-, y\odot x^-\in I$, is a congruence,
and $M/I$ with the following operations induced from $M$ is a pseudo MV-algebra, where $M/I=\{x/I\mid  x\in M\}$ and $x/I$ is the equivalence class containing $x$.
\begin{eqnarray*}
x/I\oplus y/I=(x\oplus y)/I,\quad (x/I)^-=x^-/I,\quad (x/I)^\sim=x^\sim/I,\quad 0/I,\quad 1/I.
\end{eqnarray*}
Conversely, if $\theta$ is a congruence on $M$, then $I_\theta =\{x \in M \mid x\sim 0\}$ is a normal ideal such that $\sim_{I_\theta}=\sim$.

%An element $x$ of a pseudo MV-algebra $M$ is said to be {\it infinitesimal} if $nx$ exists in $M$ for each integer $n\ge 1$, and let $\Infinit(M)$ denote the set of infinitesimals of $M$. We note that $x$ is an infinitesimal iff $n.x\le x^-$ for each $n\ge 1$.  Clearly, $0\in \Infinit(M)$, $1\notin \Infinit(M)$, and if $x\le y\in \Infinit(M)$, then $x\in \Infinit(M)$. A pseudo MV-algebra $M$ is {\it Archimedean} if $\Infinit(M)=\{0\}$. Then $M$ is an MV-algebra, \cite{Dvu1}.

A pseudo MV-algebra $M$ is {\it representable} if $M$ is a subdirect product of a linearly ordered pseudo MV-algebras system.
By \cite[Prop. 6.9]{Dvu2}, $M$ is representable \iff $a^\bot=\{x \in M\mid x\wedge a =0\}$ is a normal ideal of $M$ for each $a \in M$. Moreover, the class of representable pseudo MV-algebras forms a variety \cite[Thm 6.11]{Dvu2}.

We will also use pseudo MV-algebras of the form $\Gamma(H\lex G,(u,0))$, where $(H,u)$ is a linearly ordered unital $\ell$-group, $G$ is an $\ell$-group, and $\lex$ denotes the lexicographic product of $H$ and $G$. We note that $\Gamma(H,u)\cong \Gamma(H\lex O,(u,0))$, where $O$ is the one-element zero group, i.e. $O=\{0\}$.

An important class of pseudo MV-algebras is the class of $(H,1)$-perfect pseudo MV-algebras, where $(H,1)$ is a unital $\ell$-subgroup of the unital group of real numbers $(\mathbb R,1)$. They are roughly speaking isomorphic to $\Gamma(H\lex G,(1,0))$, where $G$ is an $\ell$-group, see \cite[Thm 4.3]{Dvu4}, see also \cite{Dvu3}. We recall that if $A,B$ are two subsets of $M$, then $A\le B$ means $a\le b$ for each $a\in A$ and each $b \in B$.
We say a pseudo MV-algebra $M$ is $H$-{\it perfect}, if there is a decomposition $(M_h\mid  h\in H)$ of $M$ such that
\begin{itemize}
\item[{\rm (i)}] $M_s\ne \emptyset$ for each $s \in H$,
\item[{\rm (ii)}] $M_s\le M_t$ if $s\le t$, $s,t \in H$,
\item[{\rm (iii)}] $M_s^-= M_{1-s}= M^\sim_s$ for each $s\in H$,
\item[{\rm (iv)}] if $x\in M_s$ and $y\in M_t$, then $x\oplus y\in M_{s\oplus t}$, where $s\oplus t= \min\{s+y,1\}$, $s,t \in H$.
\end{itemize}

An $(H,1)$-perfect pseudo MV-algebra $M=\Gamma(K,v)$ is {\it strongly} $(H,1)$-{\it perfect}
if there is a system of elements $(c_t\mid  t \in H)$ such that
\begin{itemize}
\item[{\rm (i)}] $c_t \in H\cap \C(K)$, $t \in H$,
\item[{\rm (ii)}] if $s,t \in H$ and $s+t\in H$, then $c_s+c_t=c_{s+t}$,
\item[{\rm (iii)}] $c_1=1$.
\end{itemize}
According to \cite[Thm 4.3]{Dvu4}, a pseudo MV-algebra $M$ is strongly $(H,1)$-perfect iff there is an $\ell$-group $G$ such that $M\cong \Gamma(H\lex G,(1,0))$.

A notion of a square root on MV-algebras was introduced in \cite{Hol}. For pseudo MV-algebras, it was introduced and studied in \cite{DvZa3}.

\begin{defn}
A mapping $r:M\to M$ is said to be (i) a {\em square root} if it satisfies the following conditions:
\begin{itemize}
\item[{\rm (Sq1)}] for all $x\in M$, $r(x)\odot r(x)=x$,

\item[{\rm (Sq2)}] for each $x,y\in M$, $y\odot y\leq x$ implies $y\leq r(x)$,

\item[{\rm (Sq3)}] for each $x\in M$, $r(x^-)= r(x)\to r(0)$ and $r(x^\sim)=r(x)\rightsquigarrow r(0)$,
\end{itemize}
and (ii) a {\it weak square root} if it satisfies only (Sq1) and (Sq2).
A pseudo MV-algebra $(M;\oplus,^-,^\sim,0,1)$ has {\em square roots} ({\it weak square roots}) if there exists a square root (weak square root) $r$ on $M$. If $M$ is an MV-algebras, then both notions of a square root coincide. A square root $r$ is {\it strict} if $r(0)=r(0)^-$, equivalently, $r(0)=r(0)^\sim$. If $M$ has a square root (weak square root), it is unique.
\end{defn}

We note that it can happen that a pseudo MV-algebra has a weak square root but no square root; see \cite[Sec 6]{DvZa3} for such examples.

The basic properties of square roots on pseudo MV-algebras can be found in the following result:

\begin{prop}\label{3.2.0}\cite{DvZa3}
Let $r$ be a square root on a pseudo MV-algebra $(M;\oplus,^-,^\sim,0,1)$. For each $x,y\in M$, we have:
\begin{itemize}[nolistsep]
\item[{\rm (1)}] $x\leq x\vee r(0)\leq r(x)$, $r(1)=1$, $(r(x)\odot r(0))\vee(r(0)\odot r(x))\leq x$ and $r(x)\odot x=x\odot r(x)$.
\item[{\rm (2)}] $x\leq y$ implies that $r(x)\leq r(y)$.
\item[{\rm (3)}] $x\wedge y\leq r(x)\odot r(y), r(y)\odot r(x)$ and if $a\in \B(M)$ such that $a\leq r(0)$, then $a=0$.
\item[{\rm (4)}] $x\leq r(x\odot x)$ and $r(x\odot x)\odot r(x\odot x)=r(x)\odot r(x)\odot r(x)\odot r(x)=x\odot x$.
\item[{\rm (5)}] $(x\wedge x^-)\vee (x\wedge x^\sim)\leq r(0)$.
\item[{\rm (6)}] $r(x)\in \B(M)$ \iff $r(x)=x$.
\item[{\rm (7)}] $r(x)\wedge r(y)=r(x\wedge y)$.
\item[{\rm (8)}] $r(x)\ra r(y)\le r(x\ra y)$ and $r(x)\rightsquigarrow r(y)\le r(x\rightsquigarrow y)$. Moreover, $r(x)\odot r(y)\leq r(x\odot y)$ for all $x,y\in M$ if and only if $r(x)\ra r(y)= r(x\ra y)$ and $r(x)\rightsquigarrow r(y)= r(x\rightsquigarrow y)$.

\item[{\rm (9)}] $r(x\vee y)=r(x)\vee r(y)$.

\item[{\rm (10)}]
$r(x\odot y)\leq (r(x)\odot r(y))\vee r(0)$ and $r(x\odot x)=(r(x)\odot r(x))\vee r(0)$. Consequently, if $r(0)\leq x$, then $r(x\odot x)=x$.

\item[{\rm (11)}] {\rm (a)} $x\in \B(M)$ \iff $r(x)=x\oplus r(0)$ \iff $r(x)=r(0)\oplus x$.

\noindent
{\rm (b)} $(r(0)\ra 0)\odot (r(0)\ra 0)=(r(0)\rightsquigarrow 0)\odot (r(0)\rightsquigarrow 0)\in \B(M)$.

\noindent
{\rm(c)} If $a,b \in M$, $a\le b$, then $r([a,b])=[r(a),r(b)]$. In particular, $r(M)=[r(0),1]$.

\end{itemize}
Properties {\rm (1)--(8)} hold also for each weak square root on $M$.
\end{prop}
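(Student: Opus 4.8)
The plan is to reduce everything to a single observation: (Sq1) and (Sq2) together say that $r(x)$ is the \emph{greatest} element whose $\odot$-square lies below $x$. Indeed $r(x)\odot r(x)=x\le x$ by (Sq1), while (Sq2) makes $r(x)$ an upper bound of $\{y:y\odot y\le x\}$; call this the extremal property. Since the statement asserts that (1)--(8) survive for weak square roots, the proofs of (1)--(8) must use only this extremal property together with the ambient facts that $\odot$ is associative and isotone in both arguments, that $a\odot a\le a\le a\oplus a$, and that $a\in\B(M)$ exactly when $a\odot a=a$. I would record these first.

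With extremality in hand, most of (1)--(7) become one-line verifications of the shape ``square a candidate, compare with the target, invoke extremality''. Monotonicity (2) is immediate: $r(x)\odot r(x)=x\le y$ gives $r(x)\le r(y)$. For (1), $x\odot x\le x$ and $r(0)\odot r(0)=0\le x$ yield $x\vee r(0)\le r(x)$, while $r(1)\odot r(1)=1$ forces $r(1)=1$; the identity $r(x)\odot x=x\odot r(x)$ is pure associativity, both sides equalling $r(x)\odot r(x)\odot r(x)$, and $(r(x)\odot r(0))\vee(r(0)\odot r(x))\le x$ follows from $r(0)\le r(x)$ by isotonicity. In the same spirit: (3) uses $x\wedge y\le x,y$ to get $(x\wedge y)\odot(x\wedge y)\le r(x)\odot r(y),\,r(y)\odot r(x)$, plus $a=a\odot a\le r(0)\odot r(0)=0$ for Boolean $a\le r(0)$; (5) uses $(x\wedge x^-)\odot(x\wedge x^-)\le x\odot x^-=0$ and its $\sim$-twin; (7) notes that $r(x)\wedge r(y)$ squared lies below both $x$ and $y$; (4) is (Sq2) applied to $x\odot x$ together with associativity; and (6) is the remark that a Boolean $r(x)$ is $\odot$-idempotent, so $r(x)=r(x)\odot r(x)=x$. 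The only noncommutative subtlety is that each left/right pair (e.g. $r(x)\odot r(y)$ versus $r(y)\odot r(x)$, or $x\wedge x^-$ versus $x\wedge x^\sim$) must be argued separately, which isotonicity supplies symmetrically.

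The inequalities in (8) I would again deduce from extremality after isolating a square-root-free residuation lemma: $(a\to b)\odot(a\to b)\le(a\odot a)\to(b\odot b)$ and its mirror $(a\rightsquigarrow b)\odot(a\rightsquigarrow b)\le(a\odot a)\rightsquigarrow(b\odot b)$, valid in every pseudo MV-algebra. Taking $a=r(x)$, $b=r(y)$ and using (Sq1) turns the right-hand sides into $x\to y$ and $x\rightsquigarrow y$, so (Sq2) yields the two displayed inequalities; the ``moreover'' equivalence then comes from the adjunction between $\odot$ and the two residua, which transposes $r(x)\odot r(y)\le r(x\odot y)$ into the reverse inequalities, upgrading them to equalities, and conversely. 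The remaining items (9)--(11) are precisely the ones requiring (Sq3). For (9) I would not square a candidate (extremality, via monotonicity, only gives the easy half $r(x)\vee r(y)\le r(x\vee y)$); instead I compute $r(x\vee y)$ outright through the De Morgan law $x\vee y=(x^-\wedge y^-)^\sim$, rewriting $r((x^-\wedge y^-)^\sim)$ as $r(x^-\wedge y^-)\rightsquigarrow r(0)$ by (Sq3), then applying (7) and (Sq3) again to reach $((r(x)\to r(0))\wedge(r(y)\to r(0)))\rightsquigarrow r(0)$, which collapses to $r(x)\vee r(y)$. Property (10) runs along the dual identity $x\odot y=(x^-\oplus y^-)^\sim$, feeding the lower bounds of (1) for $r(x^-\oplus y^-)$ back through (Sq3); (11)(a) characterizes Boolean $x$ via (Sq3) and (3), (11)(b) is a direct computation with $r(0)^-=r(0)\to0$ and $r(0)^\sim=r(0)\rightsquigarrow0$, and (11)(c) is a clean corollary of (10) and (2), since $t=c\odot c$ realizes any $c\in[r(a),r(b)]$ as $r(t)=(c\odot c)\vee r(0)=c$.

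I expect the genuine obstacle to be the \emph{upper}-bound direction for square roots that pervades (9) and (10): (Sq2) only ever produces lower bounds $y\le r(x)$, so every assertion of the form $r(\cdot)\le\cdots$ must be routed through the negations via (Sq3), and in the noncommutative setting one must carry the left/right asymmetry ($\to$ versus $\rightsquigarrow$, and the two orders of $\odot$) correctly through each De Morgan passage rather than appealing to commutativity as in H\"ohle's MV-algebra treatment. Verifying the two residuation lemmas behind (8) in the noncommutative $\Gamma(G,u)$ picture, where $a\odot b=(a+b-u)\vee0$ with $a+b\ne b+a$ in general, is the other place where care is genuinely required.
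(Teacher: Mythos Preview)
The paper does not prove Proposition~\ref{3.2.0}: it is stated with the citation \cite{DvZa3} and no proof environment follows; it serves purely as an imported list of facts from the earlier paper. So there is no proof in this paper to compare your attempt against. That said, a few remarks on your sketch are in order.

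Your extremal reformulation of (Sq1)+(Sq2) is exactly the right engine for (1)--(7), and your outline for (9)--(11) via (Sq3) and De~Morgan is the standard route. One slip: in (3) you write ``$(x\wedge y)\odot(x\wedge y)\le r(x)\odot r(y)$'', but the claim is $x\wedge y\le r(x)\odot r(y)$, which is stronger. The fix is immediate once you use (Sq1) rather than (1): $x\wedge y = r(x\wedge y)\odot r(x\wedge y)\le r(x)\odot r(y)$ by monotonicity of $r$ and isotonicity of $\odot$. Similarly, in (11)(c) you want $r(c\odot c)=c\vee r(0)=c$ for $c\ge r(a)\ge r(0)$, not ``$(c\odot c)\vee r(0)$''.

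The genuine soft spot is (8). Your plan reduces it to the lemma $(a\to b)\odot(a\to b)\le (a\odot a)\to(b\odot b)$ in an arbitrary pseudo MV-algebra, but the obvious attack ---rewrite the target as $a\odot a\odot(a\to b)\odot(a\to b)\le b\odot b$ and use $a\odot(a\to b)=a\wedge b$--- only yields $a\odot(a\wedge b)\odot(a\to b)\le a\odot b$, and there is no way to trade the leading $a$ for a $b$ without commutativity. You flag this as ``the other place where care is genuinely required'', which is honest, but as written the lemma is not established and the noncommutative case does not go through by the grouping you suggest; you should either supply a proof of that lemma (e.g.\ via a careful $\Gamma(G,u)$ computation or a different residuation manoeuvre) or find an alternative route to $r(x)\to r(y)\le r(x\to y)$ that does not pass through it.
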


\section{Characterizations of pseudo MV-algebras with square roots}%3

Recently, in \cite{DvZa3}, we presented some characterizations of pseudo MV-algebras with square roots using unital $\ell$-groups.
In this section, we study pseudo MV-algebras and find new conditions under which they have square roots (strict or non-strict).

First, we gather some useful properties that will be used in the sequel.

\begin{defn}\label{7.0}
Let $(M;\oplus,^-,^\sim,0,1)$ be a pseudo MV-algebra and $a$ be an element of $M$. We say that $M$ is

(i) {\it $f$-isomorphic to $[0,a]$}, if there is a bijective and order-preserving map $f:M\to [0,a]$ such that $[0,a]$
with, $\oplus_f$ (the binary operation is inherited from $M$ by $f$, that is, $f(x)\oplus_f f(y):=f(x\oplus y)$), $^{-a}$ and $^{\sim a}$ form a pseudo MV-algebra such that $M\cong [0,a]$ and for all $x\in M$, $f(x)\oplus_f f(x)=(f(x)\oplus f(x))\wedge a$.

(ii) isomorphic to $[0,a]$ if $(M;\oplus,^-,^\sim,0,1)$ is isomorphic to $([0,a];\oplus_a,^{-a},^{\sim a},0,a)$ (see Remark \ref{interval-rmk}).

Note that in (ii), $\oplus_a$ is induced from $\oplus$, i.e. $x\oplus_a y= (x\oplus y)\wedge a$.
Clearly, if $M$ is isomorphic to $[0,b]$ for some $b\in M$, then there is an isomorphism $g:M\to [0,b]$ of pseudo MV-algebras and so $M$ is $g$-isomorphic to $[0,b]$.
\end{defn}

\begin{prop}\label{7.1}
Let $(M;\oplus,^-,^\sim,0,1)$ be a pseudo MV-algebra with a square root $r:M\to M$.
\begin{itemize}[nolistsep]
\item[{\rm (i)}] $[r(0),r(0)^-]\s r(\B(M))$.
\item[{\rm (ii)}] $M$ is a Boolean algebra if $[r(0),r(0)^-]=M$, and $r$ is strict if $[r(0),r(0)^-]=\{r(0)\}$.
\item[{\rm (iii)}] For each $x\in M$, there exists a unique element $R(x)\leq r(0)^-$ such that $R(x)\oplus r(0)=r(x)$.
\item[{\rm (iv)}] $M$ is $f$-isomorphic to $[0,r(0)^-]$ for some bijection $f:M\to [0,r(0)^-]$.
\item[{\rm (v)}] For each $x\in M$, there exists a unique element $R(x)\leq r(0)^-$ such that $R(x)\oplus R(x)=x$.
\item[{\rm (vi)}] If $r(x)\odot r(y)\leq r(x\odot y)$ for all $x,y\in M$, then $M$ is isomorphic to $[0,r(0)^-]$.
\item[{\rm (vii)}] For each $x\in M$, $r(x)=\max\{y\wedge (y\ra x)\mid y\in M\}=\max\{y\wedge (y\rightsquigarrow x)\mid y\in M\}$.
\item[{\rm (viii)}] $x\odot r(0)\leq x\odot x$ for all $x\in M$.
\end{itemize}
\end{prop}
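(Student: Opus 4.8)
The plan is to pass to the $\ell$-group representation $M=\Gamma(G,u)$ of \cite{Dvu1} and to organize everything around the element $s:=r(0)$. Two facts drive the argument. First, applying (Sq2) with $x=0$ shows that $s$ is the \emph{largest} element of $M$ with $s\odot s=0$, i.e. $2s\le u$ in $G$, and that no $w\in[s,u]$ other than $s$ satisfies $w\odot w=0$. Second, by Proposition \ref{3.2.0}(11)(b) the element $e:=(r(0)\ra 0)\odot(r(0)\ra 0)=r(0)^-\odot r(0)^-$ is a Boolean (hence central) idempotent, and in $G$ it equals $(u-2s)\vee 0=u-2s$. Throughout I use that $r$ is injective (from $x=r(x)\odot r(x)$) and that, by Proposition \ref{3.2.0}(11)(c), $r$ is an order isomorphism of $M$ onto $[r(0),1]=[s,u]$ with inverse $w\mapsto w\odot w$.

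The arithmetic items (iii) and (vii) I would do directly. For (iii) set $R(x):=r(x)\odot r(0)^-$; since $r(x)\ge r(0)$ (Proposition \ref{3.2.0}(1)) this is $r(x)-s$ in $G$, so $0\le R(x)\le u-s=r(0)^-$ and $R(x)\oplus r(0)=(r(x)-s+s)\wedge u=r(x)$, while the bound $R(x)\le r(0)^-$ deactivates the truncation in $\oplus$ and forces uniqueness. For (vii), the value $r(x)$ is attained at $y=r(x)$ because $r(x)\le r(x)^-\oplus x=r(x)\ra x$, which reduces to $2r(x)-u\le x=(2r(x)-u)\vee 0$; and for an arbitrary $y$, putting $z:=y\wedge(y\ra x)$ we have $z\le y$ and $z\le y^-\oplus x$, so by monotonicity of $\odot$ and the meet identity (A7), $z\odot z\le y\odot(y^-\oplus x)=y\wedge x\le x$, whence $z\le r(x)$ by (Sq2). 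The $\rightsquigarrow$ statement is symmetric, using the dual form $(x\oplus y^\sim)\odot y=x\wedge y$ of (A7).

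For the remaining items I would import the structural decomposition of a pseudo MV-algebra with square root from the classification of \cite{DvZa3} (the pseudo analogue of H\"ohle's trichotomy): the central idempotent $e$ splits $M\cong[0,e]\times[0,e^-]$, where $[0,e]$ is a Boolean algebra on which $r=\mathrm{id}$ and $r(0)=0$, while $[0,e^-]$ is a \emph{strict} factor with $r(0)=r(0)^-=:\sigma$ that is two-divisible, on which $R(x)=r(x)\odot r(0)^-=r(x)-\sigma$ is the genuine half $x/2$. Writing elements as $(x_1,x_2)\in[0,e]\times[0,e^-]$, item (v) is checked coordinatewise: on the Boolean factor $\oplus$ is idempotent so $R=\mathrm{id}$ solves $R\oplus R=x$, on the strict factor $R=x/2$ does (and $2R\le u$ there forces uniqueness), and $R\le r(0)^-$ in each coordinate. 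Item (viii) collapses to $0\le(2x_1)\wedge u$ on the Boolean side and $(x_2-\sigma)^+\le 2(x_2-\sigma)^+$ on the strict side. For (iv) the same $f:=R$ is the required order bijection onto $[0,r(0)^-]$, and the compatibility $f(x\oplus x)=(f(x)\oplus f(x))\wedge r(0)^-$ of Definition \ref{7.0}(i) is verified coordinatewise. For (i), given $y\in[r(0),r(0)^-]=[0,e]\times\{\sigma\}$ its unique $r$-preimage $b=y\odot y=(y_1,0)$ lies in the Boolean factor $[0,e]\subseteq\B(M)$, so $y=r(b)\in r(\B(M))$; then (ii) is immediate, since a singleton interval means $r(0)=r(0)^-$ (strictness), while $[r(0),r(0)^-]=M$ forces $r(0)=0$, whence (i) gives $r(\B(M))=M=r(M)$ and injectivity of $r$ yields $\B(M)=M$. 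Finally, for (vi), Proposition \ref{3.2.0}(8) and (10) together with the hypothesis pin down $r(x\odot y)=(r(x)\odot r(y))\vee r(0)$, which upgrades the bijection $f=R$ of (iv) to a homomorphism, hence to an isomorphism $M\cong([0,r(0)^-];\oplus_{r(0)^-},{}^{-a},{}^{\sim a},0,r(0)^-)$.

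The main obstacle is precisely the step hidden in the third paragraph: establishing that the square-root axioms force two-divisibility on the strict factor, i.e. that $R(x)=r(x)\odot r(0)^-$ really acts as a half there. A naive $\ell$-group verification of identities such as $R(x)\oplus R(x)=x$ or $x\odot r(0)\le x\odot x$ fails on an arbitrary chain; what rescues them is exactly the maximality of $r(0)$ (no $w\in(s,u]$ has $w\odot w=0$) together with the splitting off of the Boolean idempotent $e$, which rules out the offending non-two-divisible chains. Making this decomposition self-contained, rather than quoting \cite{DvZa3}, is where the genuine work lies.
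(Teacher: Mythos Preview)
Your direct arguments for (iii) and (vii) are correct and essentially match the paper's. Your treatment of (i), (ii), (vi) via the Boolean\,$\times$\,strict decomposition of \cite[Thm~4.3]{DvZa3} is also sound, and the paper likewise leans on that reference for these items.

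The genuine gap is exactly the one you flag in your last paragraph: for (v) and (viii) you need that on the strict factor $R(x)=r(x)-\sigma$ is really $x/2$, equivalently that the strict factor is two-divisible, and you do not prove this. You suggest it follows from ``maximality of $r(0)$ together with splitting off $e$'', but that is not an argument; in fact two-divisibility of $G$ for a general strict pseudo MV-algebra is still open even in the representable case (this is Problem~\ref{op:1} of the present paper). Your coordinatewise computation for (viii) also tacitly assumes $\sigma$ commutes with $x_2$ in the representing group of the strict factor, which is only known for representable $M$ (\cite[Thm~5.6]{DvZa3}).

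The paper sidesteps all of this. For (v) it simply quotes the identity $r(x^\sim)^-\oplus r(x^\sim)^-=x$ from \cite[Prop~3.5(i)]{DvZa3}, which holds in every pseudo MV-algebra with square root; this immediately gives existence, and uniqueness comes from the bijection in (iv). For (viii) the paper gives a two-line direct proof: since $r(x\odot x)=x\vee r(0)$ (Proposition~\ref{3.2.0}(10)), squaring both sides and distributing $\odot$ over $\vee$ yields $(x\odot x)\vee(x\odot r(0))\vee(r(0)\odot x)=x\odot x$. For (iv) the paper does not pass through the decomposition at all; it takes $f(x)=r(x^\sim)^-$ and verifies by hand that $f(x^-)=f(x)^{-r}$, $f(x^\sim)=f(x)^{\sim r}$, and the remaining pseudo MV-axioms hold on $[0,r(0)^-]$ with the transported $\oplus$. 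Your sketch of (iv) omits the negation compatibilities $f(x^-)=f(x)^-\odot r(0)^-$ and $f(x^\sim)=f(x)^\sim\odot r(0)^-$, which are the substantive checks in Definition~\ref{7.0}(i).

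In short: the identity $r(x^\sim)^-\oplus r(x^\sim)^-=x$ from \cite{DvZa3} is the missing ingredient that dissolves your ``main obstacle'' and makes the decomposition-free route both shorter and fully general.
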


\begin{proof}
Set $v:=r(0)^-\odot r(0)^-$. By \cite[Prop 3.3]{DvZa3} or Proposition \ref{3.2.0}(11), $v\in\B(M)$. Then $r(0)^\sim=r(0)^-$, so that $v=r(0)^\sim\odot r(0)^\sim$.

(i) Let $x\in [r(0),r(0)^-]$. Then $r(0)\leq x$ and \cite[Prop 3.5(i)]{DvZa3} implies
$x=x\vee r(0)=r(x\odot x)$, and so $r(0)^-=r(r(0)^-\odot r(0)^-)=r(v)$.
Also, $x\leq r(0)^-$ implies $x\odot x\leq r(0)^-\odot r(0)^-=v$.
Consider the subset $[0,v]\s M$ which is a subset of $\B(M)$ (\cite[Thm 4.3]{DvZa3}).
By Proposition \ref{3.2.0}(11)(c), $r([0,v])=[r(0),r(v)]=[r(0),r(0)^-]$.
In addition, $[0,v]=\{x\odot x\mid x\in [r(0),r(0)^-]\}$.

(ii) It follows from (i) and \cite[Thm 4.3]{DvZa3}.

(iii) Without loss of generality, we can assume that $M=\Gamma(G,u)$, where $(G,u)$ is a unital $\ell$-group. For each $x\in M$ set $R(x):=r(x^\sim)^-$. By Proposition \ref{3.2.0}(2),  $r(0)\leq r(x^\sim)$ and so $R(x)\leq r(0)^-$. By \cite[Prop 3.5(v)]{DvZa3}, $R(x)\oplus r(0)=r(x)$. Now, let $r(x)=y\oplus r(0)$ for some $y\leq r(0)^-$.
Then $y+r(0)\leq r(0)^-+r(0)=u-r(0)+r(0)=u$ and so $y+r(0)=(y+r(0))\wedge u=y\oplus r(0)$. Similarly,
$R(x)+r(0)=R(x)\oplus r(0)$.
It follows that $R(x)+r(0)=R(x)\oplus r(0)=y\oplus r(0)=y+r(0)$ entails that $R(x)=y$. In addition, from \cite[Prop 3.5(v)]{DvZa3}, we get that $r(x^-)^\sim=R(x)=r(x^\sim)^-$ since $r(x^-)^\sim\leq r(0)^\sim=r(0)^-$ and $r(x^-)^\sim\oplus r(0)=r(0)\oplus r(x^-)^\sim=r(x)$.

(iv) Consider the mapping $f:M\to [0,r(0)^-]$ defined by $f(x)=r(x^\sim)^-$.
From part (iii), we conclude that $f$ is well-defined, one-to-one, and $f(x)=r(x^-)^\sim$ for all $x \in M$.
Also, if $y\leq r(0)^-$, then $r(0)\leq y^\sim$ whence by
\cite[Prop 3.5(i)]{DvZa3}, $y^\sim=y^\sim\vee r(0)=r(y^\sim\odot y^\sim)$. It follows that
$y=r(y^\sim\odot y^\sim)^-=r(z^\sim)^-\in\im(f)$, where $z=y\oplus y$. So, $f$ is a bijection map.
Note that, for each $x\in M$, by \cite[Prop 3.5(v)]{DvZa3}, $r(0)\oplus x=x\oplus r(0)$, consequently
\begin{eqnarray}
\label{E7.1.1} r(0)^-\odot x=x\odot r(0)^-,\quad \forall x\in M.
\end{eqnarray}

Consider the following operations of $[0,r(0)^-]$. If $x,y\in [0,r(0)^-]$, there exist $a,b \in M$ such that $x=f(a)$ and
$y=f(b)$. We set
\begin{eqnarray*}
x\oplus_r y:=f(a\oplus b)=r((a\oplus b)^\sim)^-,\quad \  x^{-r}=x^-\odot r(0)^-,\quad \  x^{\sim r}=x^\sim\odot r(0)^-.
\end{eqnarray*}
Let $x,y,z\in M$.

(1) $f(0)=r(0^\sim)^-=r(1)^-=1^-=0$ and $f(1)=r(1^\sim)^-=r(0)^-$. For each $x,y\in M$,
$f(x)\oplus_r f(y)=r((x\oplus y)^\sim)^-=f(x\oplus y)$.

(2) By (Sq3), \eqref{E7.1.1}, and (iii), we have (recall that in (iii), we have proved that $r(x^-)^\sim=r(x^\sim)^-$ for all $x\in M$)
\begin{align*}
f(x^-)&= r(x^{-\sim})^-=r(x)^-=r(x)^-\wedge r(0)^-=(r(x)^-\oplus r(0))\odot r(0)^-= (r(x)\to r(0))\odot r(0)^-\\
&= r(x^-)\odot r(0)^-=r(x^-)^{\sim-}\odot r(0)^-=r(x^\sim)^{--}\odot r(0)^-=f(x)^-\odot r(0)^-=f(x)^{-r},\\
f(x^\sim)&= r(x)^\sim=r(x)^\sim\wedge r(0)^\sim = r(0)^\sim\odot (r(0)\oplus r(x)^\sim)=r(0)^\sim\odot (r(x)\rightsquigarrow r(0))
=r(0)^\sim \odot r(x\rightsquigarrow 0)\\
&= r(0)^\sim\odot r(x^\sim)=r(0)^\sim \odot r(x^\sim)^{-\sim}=r(0)^\sim\odot f(x)^\sim=f(x)^\sim\odot r(0)^-=f(x)^{\sim r}.
\end{align*}

(3) $f(0)\oplus_r f(x)=r((0\oplus x)^\sim)^-=r(x^\sim)^-=f(x)$. Similarly, $f(x)\oplus_r f(0)=f(x)$.
We have $f(x)\oplus_r f(1)=r((1\oplus x)^\sim)^-=r(1^\sim)^-=r(0)^-=r((x\oplus 1)^\sim)^-=f(1)\oplus_r f(x)$ and
$(r(0)^-)^{\sim r}=(r(0)^-)^{\sim}\odot r(0)^-=r(0)\odot r(0)^-=0$ and
$(r(0)^-)^{- r}=(r(0)^\sim)^{-}\odot r(0)^-=r(0)\odot r(0)^-=0$ (by \cite[Lem 3.15]{DvZa3}).

(4) By the definition of $\oplus_r$, we have $f(x)\oplus_r (f(y)\oplus_r f(z))=f(x)\oplus_r (r((y\oplus z)^\sim)^-)
=f(x)\oplus_r f(y\oplus z)=f(x\oplus (y\oplus z))=f((x\oplus y)\oplus z)$. Similarly,
$(f(x)\oplus_r f(y))\oplus_r f(z)=f((x\oplus y)\oplus z)$. Hence, $\oplus_r$ is associative.

(5) By \cite[Prop 3.5(v)]{DvZa3}, $(f(x)^{-r})^{\sim r}=(f(x)^-\odot r(0)^-)^\sim \odot r(0)^-=(r(0)\oplus f(x))\odot r(0)^-=
(f(x)\oplus r(0))\odot r(0)^-=f(x)\wedge r(0)^-=f(x)$.

(6) By (1) and (2), $(f(x)^{-r}\oplus_r f(y)^{-r})^{\sim r}=(f(x^-)\oplus_r f(y^-))^{\sim r}=f((x^-\oplus y^-)^\sim)$ and
$(f(x)^{\sim r}\oplus_r f(y)^{\sim r})^{-r}=(f(x^\sim)\oplus_r f(y^\sim))^{-r}=f((x^\sim\oplus y^\sim)^-)$.

(7) In a similar way, using (1) and (2), we can show that identities (A7) and (A8) hold.

(8) By Proposition \ref{3.2.0}(10),
$f(x\oplus x)=r((x\oplus x)^\sim)^-=r(x^\sim\odot x^\sim)^-=((r(x^\sim)\odot r(x^\sim))\vee r(0))^-=
(r(x^\sim)^-\oplus r(x^\sim)^-)\wedge r(0)^-=(f(x)\oplus f(x))\wedge r(0)^-$.

Therefore, $([0,r(0)^-];\oplus_r,^{-r},^{\sim r},0,r(0)^-)$ is a pseudo MV-algebras, $f$ is an isomorphism, and $M$ is isomorphic to $[0,r(0)^-]$.

(v) By \cite[Prop 3.5(i)]{DvZa3}, $r(x^\sim)^-\oplus r(x^\sim)^-=x$ and by (iii),
$r(x^\sim)^-\leq r(0)^-$. If $y\leq r(0)^-$ is an element of $M$ such that $y\oplus y=x$, then due to (iv), there exists $z\in M$ such that $r(z^\sim)^-=f(z)=y$.
We have $x=y\oplus y=r(z^\sim)^-\oplus r(z^\sim)^-=z$, and so $y=r(x^\sim)^-$.

(vi) Set $b:=r(0)^-$. Consider the bijection map $f$ defined in the proof of (iv).
By part (iv) and the definition, it suffices to show that $f(x)\oplus_b f(y)=f(x\oplus y)$ for all $x,y\in M$.
Since $r(x)\odot r(y)\leq r(x\odot y)$ and $r(0)\leq r(x\odot y)$, by Proposition \ref{3.2.0}(10), we get
$(r(x)\odot r(y))\vee r(0)= r(x\odot y)$ for all $x,y\in M$. It follows that
\begin{eqnarray*}
f(x\oplus y)&=& r((x\oplus y)^\sim)^-=r((y^\sim\odot x^\sim))^-=\left(\left(r(y^\sim)\odot r(x^\sim)\right)\vee r(0) \right)^-\\
&=& \left(r(x^\sim)^-\oplus r(y^\sim)^-\right)\wedge r(0)^-=(f(x)\oplus f(y))\wedge r(0)^-.
\end{eqnarray*}
Similarly, we can show that if the mapping $f$ is a homomorphism, then $r(x)\odot r(y)\leq r(x\odot y)$ for all $x,y\in M$.

(vii) Let $x\in M$ and $\Omega_x:=\{y\wedge (y\ra x)\mid  y\in M\}$. Then $r(x)\odot r(x)=x$ implies that $r(x)\leq r(x)\ra x$, so $r(x)\wedge (r(x)\ra x)=r(x)$ and $r(x)\in\Omega_x$.

Now, for each $y\in M$, by \cite[Prop 2.3(i)]{DvZa3}, we have
\begin{eqnarray*}
\big(y\wedge (y\ra x) \big)\odot \big(y\wedge (y\ra x) \big)\leq y\odot (y\ra x)=y\wedge x\leq x,
\end{eqnarray*}
which means $y\wedge (y\ra x)\leq r(x)$. Therefore, $r(x)=\max\Omega_x=\max\{y\wedge (y\ra x)\mid  y\in M\}$.

In a similar way, we can show that $r(x)=\max\{y\wedge (y\rightsquigarrow x)\mid  y\in M\}$.

(viii) For each $x\in M$, we have $r(x\odot x)=x\vee r(0)$, by \cite[Prop 3.3(10)]{DvZa3}.
It follows that $(x\odot x)\vee (x\odot r(0))\vee (r(0)\odot x)=(x\vee r(0))\odot (x\vee r(0))=x\odot x$, so
$x\odot r(0)\leq x\odot x$.
\end{proof}

According to Proposition \ref{7.1}(iv), if $M$ is a pseudo MV-algebra with a square root $r$, then $M$ is $f$-isomorphic to $[0,r(0)^-]$,
where $f:M\to [0,r(0)^-]$ is defined by $f(x)=r(x^\sim)^-$ for all $x\in M$.

\begin{thm}\label{7.2}
Let $(M;\oplus,^-,^\sim,0,1)$ be a pseudo MV-algebra. Then $M$ has a square root \iff there exists $b\in M$ satisfying the following conditions:
\begin{itemize}[nolistsep]
\item[{\rm (i)}] $b^-\leq b$ and $b\odot x=x\odot b$ for all $x\in M$,
\item[{\rm (ii)}] $f(x)\oplus f(x)=x$,
\item[{\rm (iii)}] $M$ is $f$-isomorphic to $[0,b]$ for some bijection $f:M\to [0,b]$.
\end{itemize}
In addition, if in condition {\rm (iii)}, $M$ is isomorphic to the $[0,b]$, then $r(x)\odot r(y)\leq r(x\odot y)$ for all $x,y\in M$.
\end{thm}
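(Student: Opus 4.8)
The plan is to read the equivalence as two mutually inverse constructions, both organized around the pair $b=r(0)^-$ and the map $f(x)=r(x^\sim)^-$ already isolated in Proposition \ref{7.1}. For the forward implication, assuming $M$ carries a square root $r$, I would simply set $b:=r(0)^-$, $f(x):=r(x^\sim)^-$, and read off (i)--(iii). Applying (Sq1) at $0$ gives $r(0)\odot r(0)=0$, i.e. $r(0)\le r(0)^-=b$; since $r(0)^\sim=r(0)^-$ one has $b^-=r(0)\le b$, which is the first half of (i), while the second half $b\odot x=x\odot b$ is exactly \eqref{E7.1.1}. Condition (ii) is the identity $r(x^\sim)^-\oplus r(x^\sim)^-=x$ recorded in Proposition \ref{7.1}(v), and (iii) is Proposition \ref{7.1}(iv).

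For the converse, given $b$ and $f$ satisfying (i)--(iii), I would define $r(x):=f(x^\sim)^-$ (equivalently $f(x^-)^\sim$, once the two negations are seen to coincide) and verify the three axioms. Axiom (Sq1) is immediate: using $(a\oplus a)^-=a^-\odot a^-$ together with (ii), $r(x)\odot r(x)=\bigl(f(x^\sim)\oplus f(x^\sim)\bigr)^-=(x^\sim)^-=x$. Along the way I would record that $r(0)=f(1)^-=b^-$, and that (i) applied at $u$ yields $b\odot u=u\odot b$, hence $b+u=u+b$, so that $b^-=b^\sim$ and $r(0)^-=b$; this makes the data match the setup of Proposition \ref{7.1}.

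The real work is (Sq2). Applying $^\sim$ turns the goal $y\le r(x)$ into $f(x^\sim)\le y^\sim$ and the hypothesis $y\odot y\le x$ into $y^\sim\oplus y^\sim\ge x^\sim$; writing $q=x^\sim$, $p=y^\sim$, it suffices to prove that $p\oplus p\ge q$ forces $f(q)\le p$. Here I would exploit that, by (ii), the map $g(w):=w\oplus w$ is the two-sided inverse of $f$ on $[0,b]$, hence an order isomorphism $[0,b]\to M$. Since $f(q)\le b$, the inequality $f(q)\le p$ is equivalent to $f(q)\le p\wedge b$, and, applying the order isomorphism $g$, to $q\le(p\wedge b)\oplus(p\wedge b)$. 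Passing to $M=\Gamma(G,u)$ and using that $b$ is central (from (i)) together with $2b\ge u$ (from $b^-\le b$), I would rewrite $(p\wedge b)\oplus(p\wedge b)=2p\wedge(p+b)\wedge u$, reducing the claim to $q\le 2p$, $q\le u$, and $q\le p+b$. The first two are clear (from $p\oplus p\ge q$ one gets $2p\ge q$, and $q\le u$ since $q\in M$), and the last follows from $q\le 2p\wedge 2b\le p+b$, where the $\ell$-group identity $(p+b)-(2p\wedge 2b)=(b-p)\vee(p-b)\ge 0$ again uses the centrality of $b$. I expect this to be the main obstacle: in a genuinely non-commutative $\ell$-group one cannot halve inequalities, and it is the centrality of $b$ furnished by (i) (not merely $b^-\le b$) that rescues both the rewriting of $(p\wedge b)\oplus(p\wedge b)$ and the bound $2p\wedge 2b\le p+b$.

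Finally, (Sq3) I would verify by the computation mirroring step~(2) in the proof of Proposition \ref{7.1}(iv), run in reverse: being an $f$-isomorphism makes $f$ respect the interval negations $^{-b},{}^{\sim b}$ of Remark \ref{interval-rmk}, and combining this with $b\odot x=x\odot b$ and $r(0)=b^-=b^\sim$ yields $r(x^-)=r(x)\to r(0)$ and $r(x^\sim)=r(x)\rightsquigarrow r(0)$. For the additional assertion, once $r$ is known to be a square root, Proposition \ref{7.1} applies with this very $b$ and $f$; if moreover $M$ is isomorphic (not merely $f$-isomorphic) to $[0,b]$, then $f$ is a homomorphism for $\oplus_b$, and the ``Similarly\ldots'' remark closing the proof of Proposition \ref{7.1}(vi) gives $r(x)\odot r(y)\le r(x\odot y)$ for all $x,y\in M$.
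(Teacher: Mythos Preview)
Your forward direction (square root $\Rightarrow$ (i)--(iii)) and your verifications of (Sq1) and (Sq3) are essentially the paper's argument, and the additional assertion is handled the same way. There are, however, two problems in your converse.

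First, a small one: applying (i) at $x=u$ is vacuous, since $b\odot u=b=u\odot b$ holds in any pseudo MV-algebra, so it does not give $b+u=u+b$. The correct route to $b^-=b^\sim$ is to apply (i) at $x=b^\sim$ and $x=b^-$: from $b\odot b^\sim=0$ and (i) one gets $b^\sim\odot b=0$, hence $b^\sim\le b^-$; from $b^-\odot b=0$ and (i) one gets $b\odot b^-=0$, hence $b^-\le b^\sim$.

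Second, and more seriously, your (Sq2) computation in $G$ assumes $b+p=p+b$ for all $p\in M$ (``$b$ is central''), both in rewriting $(p\wedge b)\oplus(p\wedge b)=2p\wedge(p+b)\wedge u$ and in the identity $(p+b)-(2p\wedge 2b)=(b-p)\vee(p-b)$. Condition (i) only gives $b\odot x=x\odot b$ in $M$; unpacking this yields $b^-=b^\sim$ (so $b$ commutes with $u$) and $b^-+z=z+b^-$ for $z\in[0,b]$, but not $b+p=p+b$ for arbitrary $p\in[0,u]$, since that would additionally require $u+p=p+u$. In a genuinely non-commutative $G$ your $\ell$-group identities can fail. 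Your high-level reduction is nonetheless correct and can be completed without any centrality, using the very definition of $f$-isomorphism: from $f(a)\oplus_f f(a)=(f(a)\oplus f(a))\wedge b$ and (ii) one gets $f(a\oplus a)=a\wedge b$, hence $(p\wedge b)\oplus(p\wedge b)=g(p\wedge b)=p\oplus p\ge q$, which is exactly what you needed. The paper avoids the issue altogether by a different computation staying inside $M$: it shows $f(y\odot y)=y\odot b$ via the $f$-isomorphism and then concludes $y\le (y\odot b)\oplus b^-\le f(x)\oplus b^-=r(x)$.
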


\begin{proof}
Let $M=\Gamma(G,u)$, where $(G,u)$ is a unital $\ell$-group.

First, assume that there exists $b\in M$ satisfying the conditions (i)--(iii). Define $r:M\to M$ by
\begin{eqnarray*}
r(x)=b^-+f(x), \quad x\in M.
\end{eqnarray*}
Since for each $x\in M$, $b^-+f(x)\leq b^-+b=u$, we have $b^-+f(x)=b^-\oplus f(x)\in M$. In a similar way, $f(x)+b^-=f(x)\oplus b^-$.  Thus, $r$ is one-to-one.
Also, for all $x\in M$, $b^-+f(x)=b^-\oplus f(x)=(f(x)^\sim\odot b)^-=(b\odot f(x)^\sim)^-=f(x)\oplus b^-=f(x)+b^-$.
We claim that $r$ is a square root.

(1) From (i), it follows that $b^\sim=b^-$. Hence, $r(0)=b^-\oplus f(0)=b^-=b^\sim$.

(2) Since $M$ is $f$-isomorphic to $[0,b]$, $f(x^-)^\sim=(f(x)^-\odot b)^\sim=b^\sim\oplus f(x)=b^-\oplus f(x)=r(x)$ for each $x\in M$.

(3) By (ii) and (2), we get that $r(x)\odot r(x)=f(x^-)^\sim\odot f(x^-)^\sim=(f(x^-)\oplus f(x^-))^\sim=(x^-)^\sim=x$.

(4) Since $M$ is $f$-isomorphic to $[0,b]$, property (2) implies that $r(x\ra 0)=r(x^-)=f(x^{--})^\sim=(f(x^-)^-\odot b)^\sim=
b^\sim\oplus f(x^-)=b^-\oplus f(x^-)^{\sim-}=b^-\oplus (f(x^-)^{\sim})^-=r(x)^-\oplus b^-=r(x)\ra r(0)$.

(5) Let $y,x\in M$ such that $y\odot y\leq x$. Then $f(y\odot y)\leq f(x)$.
\begin{eqnarray*}
f(y\odot y)&=& f((y^-\oplus y^-)^\sim)=f(y^-\oplus y^-)^\sim\odot b=\big((f(y^-)\oplus f(y^-))\wedge b\big)^\sim\odot b \\
&=& \left(\left(f(y)^-\odot b \right)\oplus \left(f(y)^-\odot b \right)\right)^\sim\odot b=
\left(f(y)^-\odot b \right)^\sim\odot \left(f(y)^-\odot b \right)^\sim\odot b\\
&=& (r(y)\odot r(y))\odot b=y\odot b \mbox{ by (1), (2), and (3)}.
\end{eqnarray*}
From (2), we conclude that $y\leq y\vee b^-=(y\odot b)\oplus b^-=f(y\odot y)\oplus b^-\leq f(x)\oplus b^-=r(x)$.

(3)--(5) imply $r$ is a square root on $M$.

Conversely, if $M$ has a square root $r$, then
it suffices to set $b:=r(0)^-$ and $f(x)=r(x^\sim)^-$ for all $x\in M$. By Proposition \ref{7.2}(iv),
conditions (i)--(iii) hold.

Now, let $M$ be isomorphic to $[0,b]$. Consider the isomorphism $g:M\to [0,b]$. By the first part and the note right after Definition \ref{7.0},
$r(x)=g(x^-)^\sim=g(x)\oplus b^-$ is a square root on $M$. Let $x,y\in M$.
\begin{eqnarray*}
r(x\odot y)&=& f(y^-\oplus x^-)^\sim=f(y^-\oplus x^-)^\sim=\big((f(y^-)\oplus f(x^-))\wedge b\big)^\sim \mbox{ by the assumption}\\
&=& \big((f(y)^-\odot b )\oplus (f(x)^-\odot b )\big)^\sim\vee b^\sim=
\left(f(x)^-\odot b \right)^\sim\odot \left(f(y)^-\odot b \right)^\sim\vee b^\sim\\
&=& (r(y)\odot r(y))\vee b \mbox{ by (2) and (3)}.
\end{eqnarray*}
Therefore, $M$ satisfies the condition $r(x)\odot r(y)\leq r(x\odot y)$ for all $x,y\in M$.
\end{proof}

\begin{cor}\label{7.3}
Let $M$ be an MV-algebra. Then $M$ has a square root \iff there exists $b\in M$ such that $b^-\leq b$,
$M$ is $f$-isomorphic to the MV-algebra $[0,b]$ for some isomorphism $f:M\to[0,b]$, and for all $x\in M$, $f(x)\oplus f(x)=x$.
\end{cor}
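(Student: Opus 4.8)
The statement is the specialization of Theorem \ref{7.2} to the commutative case, so the plan is to derive it from that theorem while exploiting two features of MV-algebras. First, since $\oplus$ (hence $\odot$) is commutative, the centrality clause $b\odot x=x\odot b$ in Theorem \ref{7.2}(i) holds automatically, so (i) collapses to $b^-\le b$. Second, the conclusion here is sharper than Theorem \ref{7.2}(iii): the bijection $f$ is asserted to be a genuine isomorphism of $M$ onto the interval MV-algebra $[0,b]$, not merely an $f$-isomorphism (compare the word ``bijection'' in Theorem \ref{7.2}). Thus the real work lies in upgrading the $f$-isomorphism supplied by Theorem \ref{7.2} to an honest isomorphism.

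For ``$\Leftarrow$'', suppose such a $b$ and such an isomorphism $f$ exist. An isomorphism onto $[0,b]$ is in particular an $f$-isomorphism (the note following Definition \ref{7.0}), commutativity supplies the centrality part of (i), and $f(x)\oplus f(x)=x$ is exactly (ii); hence all hypotheses of Theorem \ref{7.2} are met and $M$ has a square root. For ``$\Rightarrow$'', let $r$ be the square root and set $b:=r(0)^-$ and $f(x):=r(x^\sim)^-=r(x^-)^-$ (the two expressions agree since $x^\sim=x^-$). The converse direction of Theorem \ref{7.2} already delivers $b^-\le b$, the doubling identity (ii), and the fact that $M$ is $f$-isomorphic to $[0,b]$. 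The remaining and decisive point is to show that this $f$ is a genuine isomorphism, that is, $f(x\oplus y)=(f(x)\oplus f(y))\wedge b$ for all $x,y$, not just on the diagonal. By Proposition \ref{7.1}(vi) (and, for the converse implication, the addendum of Theorem \ref{7.2}), this is equivalent to the multiplicativity inequality $r(x)\odot r(y)\le r(x\odot y)$ for all $x,y\in M$.

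The main obstacle is therefore to verify this multiplicativity for MV-algebras. Since $\oplus$ is commutative we have $x\to y=x\rightsquigarrow y$, so by Proposition \ref{3.2.0}(8) it suffices to prove $r(x\to y)=r(x)\to r(y)$; the inequality $r(x)\to r(y)\le r(x\to y)$ is already given there, and I would obtain the reverse one from the maximality description of $r$ in Proposition \ref{7.1}(vii). Concretely, take any $z$ with $z\odot z\le x\to y$. Using commutativity, associativity and $r(x)\odot r(x)=x$, one computes $(z\odot r(x))\odot(z\odot r(x))=(z\odot z)\odot x\le (x\to y)\odot x=x\wedge y\le y$, so (Sq2) gives $z\odot r(x)\le r(y)$, and the MV-adjunction $a\odot c\le d\Leftrightarrow a\le c\to d$ yields $z\le r(x)\to r(y)$. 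Choosing $z=r(x\to y)$, which satisfies $z\odot z=x\to y$, gives $r(x\to y)\le r(x)\to r(y)$, hence equality and the desired multiplicativity. Feeding this back through Proposition \ref{7.1}(vi) makes $f$ an isomorphism, which completes the ``$\Rightarrow$'' direction.
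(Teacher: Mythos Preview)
Your proof is correct and follows the same overall strategy as the paper: reduce to Theorem~\ref{7.2}, observe that commutativity makes the centrality hypothesis automatic, and upgrade the $f$-isomorphism to a genuine isomorphism by establishing the multiplicativity inequality $r(x)\odot r(y)\le r(x\odot y)$ (then invoke Proposition~\ref{7.1}(vi)/the addendum to Theorem~\ref{7.2}). The only difference is in how you reach that inequality: you route through the equivalent formulation $r(x\to y)=r(x)\to r(y)$ via Proposition~\ref{3.2.0}(8) and an adjunction argument, whereas the paper dispatches it in one line by computing directly $(r(x)\odot r(y))\odot(r(x)\odot r(y))=r(x)\odot r(x)\odot r(y)\odot r(y)=x\odot y$ and applying (Sq2). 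Your argument is perfectly valid---indeed the same commutativity-plus-(Sq2) idea is embedded in your computation $(z\odot r(x))^2=(z\odot z)\odot x$---but the detour through $\to$ is unnecessary here.
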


\begin{proof}
It follows from Theorem \ref{7.2}. Note that if $r$ is a square root on $M$, then for each $x,y\in M$,
$r(x)\odot r(y)\odot r(x)\odot r(y)=r(x)\odot r(x)\odot r(y)\odot r(y)=x\odot y$ and so by (Sq2),
$r(x)\odot r(y)\leq r(x\odot y)$.
\end{proof}

\section{Square roots and two-divisibility}%4

In \cite{DvZa3}, we studied the existence of a square root on a pseudo MV-algebra $M=\Gamma(G,u)$, where the unital $\ell$-group $(G,u)$ is two-divisible. In this section, we study this question in more detail, in particular, we characterize square roots on strongly $(H,1)$-perfect pseudo MV-algebras.

Note that if $(G;+,0)$ is an $\ell$-group and $x,y\in G$ such that $(x+y)/2=x/2+y/2$, then
$(x/2+y/2)+(x/2+y/2)=x+y=(x/2+x/2)+(y/2+y/2)$ and so $y/2+x/2=x/2+y/2$. It yields that
$x+y=y+x$. Hence, if $G$ is two-divisible that enjoys unique extraction of roots, then $G$ satisfies the identity
$(x+y)/2=x/2+y/2$ \iff $G$ is Abelian.

More generally, if $G$ is a two-divisible $\ell$-group that enjoys unique extraction of roots and satisfies the following inequality
\[
x/2+y/2\leq (x+y)/2,\quad  x,y\in G,
\]
then $x/2=((x-y)+y)/2\geq (x-y)/2+y/2$ and so $x/2-y/2\geq (x-y)/2$ for all $x,y\in G$.
Substituting $y$ by $-y$ in the last inequality, we get $x/2+y/2=x/2-(-y)/2\geq (x+y)/2$ for all $x,y\in M$. That is,
\[
x/2+y/2= (x+y)/2,\quad  x,y\in G.
\]
In a similar way, $x/2+y/2\geq (x+y)/2$ for all $x,y\in G$ implies that
$x/2+y/2= (x+y)/2$ for all $x,y\in G$.

\begin{thm}\label{8.1}
Let a pseudo MV-algebra $M=\Gamma(G,u)$ be the direct product of linearly ordered pseudo MV-algebras and let $M$ be with strict square root. Then $G$ is two-divisible.
\end{thm}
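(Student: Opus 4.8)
The plan is to work inside the representing unital $\ell$-group, writing $M=\Gamma(G,u)$ and recalling that $x\odot y=(x+y-u)\vee 0$ in $G$. Set $c:=r(0)$. Strictness means $r(0)=r(0)^-$, i.e. $c=u-c$ in $G$, so that $u=2c$ and $c=u/2$; in particular $u$ is already two-divisible. First I would record an affine description of $r$: by Proposition \ref{3.2.0}(1) we have $r(x)\ge r(0)=c$ for every $x$, hence $2r(x)\ge u$ and the truncation in $r(x)\odot r(x)=(2r(x)-u)\vee 0$ is inactive; combined with (Sq1) this gives $2r(x)=x+u$ for all $x\in[0,u]$. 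Thus every element of the translated interval $[u,2u]=\{x+u\mid x\in[0,u]\}$ is two-divisible in $G$, with half $r(x)$.

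The key step, and the one I expect to be the main obstacle, is to prove that $c=u/2$ lies in the commutative center $\C(G)$. The two identities available are $r(0)\oplus x=x\oplus r(0)$ and, from \eqref{E7.1.1}, $r(0)^-\odot x=x\odot r(0)^-$, both valid for every $x\in M$ (see the proof of Proposition \ref{7.1}(iv)); under strictness these read $c\oplus x=x\oplus c$ and $c\odot x=x\odot c$. The difficulty is that each asserts commutativity only ``up to truncation'' (a $\wedge u$ in the first, a $\vee 0$ in the second), so neither alone yields $c+x=x+c$ in $G$. I would remove the truncations by a case split: for $x\le c$ both $c+x$ and $x+c$ are $\le u$, so $c\oplus x=x\oplus c$ collapses to $c+x=x+c$; for $x>c$ one checks $c+x>u$ and $x+c>u$, so the $\vee 0$ in $c\odot x=x\odot c$ is inactive and gives $c+x-u=x+c-u$, again $c+x=x+c$. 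Hence $c$ commutes with every element of $[0,u]$. Since $c$ plainly commutes with $u$ (from $u=2c$), writing an arbitrary $g\in G$ as $g=x+ku$ with $x\in[0,u)$ and $k\in\mathbb{Z}$ (possible because $u$ is a strong unit) reduces $c+g=g+c$ to $c+x=x+c$, so $c\in\C(G)$.

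With centrality in hand the conclusion is routine. Given $g\in G$, write $g=x+ku$ with $x\in[0,u)$ and $k\in\mathbb{Z}$. Then $g+u=(x+u)+ku$; halving $x+u$ by the affine formula (so $2w=x+u$ with $w=r(x)$) and using $ku=2(kc)$, which is legitimate since $c$, hence $kc$, is central, shows $g+u=2(w+kc)$. Subtracting $u=2c$ and moving it past the central element $c$ gives $g=2\big(w+(k-1)c\big)$, so $g$ is two-divisible. As $g$ was arbitrary, $G$ is two-divisible.

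I would also remark that the hypothesis that $M$ is a direct product of linearly ordered pseudo MV-algebras enters this argument only indirectly: via representability it furnishes unique extraction of roots in $G$, which supports an alternative route in which $r$ is decomposed over the factors and each linearly ordered component is treated separately. The self-contained global argument above, resting on the two commutativity-up-to-truncation identities for $r(0)$ together with $u=2r(0)$, is the one I would carry out, as it isolates the single genuine obstacle, namely the centrality of $u/2$.
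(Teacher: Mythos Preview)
Your argument is sound when $M$ (hence $G$) is linearly ordered, and that part essentially matches part (I) of the paper's proof. The difficulties arise precisely where you claim the direct-product hypothesis is unnecessary.

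\textbf{The case split for centrality is not exhaustive.} When $M$ is a nontrivial direct product of chains, an element $x\in M$ need not satisfy $x\le c$ or $x>c$; the two can be incomparable. So your dichotomy ``$x\le c$ uses $\oplus$, $x>c$ uses $\odot$'' does not cover all cases. This gap is repairable without linear order: from $c\oplus x=x\oplus c$ and $c\odot x=x\odot c$ you get $(c+x)\wedge u=(x+c)\wedge u$ and, after adding $u$ on the right, $(c+x)\vee u=(x+c)\vee u$. Since the lattice underlying any $\ell$-group is distributive, $a\wedge u=b\wedge u$ together with $a\vee u=b\vee u$ forces $a=b$; hence $c+x=x+c$ for every $x\in M$. (The paper simply cites \cite[Thm 5.6]{DvZa3} for $u/2\in\C(G)$.)

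\textbf{The decomposition $g=x+ku$ fails outside the linear case.} Writing an arbitrary $g\in G$ as $x+ku$ with $x\in[0,u)$ and $k\in\mathbb Z$ requires that for each $g$ there is a single integer $k$ with $ku\le g<(k+1)u$. This is available in a linearly ordered group but not in a product of them: for $G=G_1\times G_2$ with $u=(u_1,u_2)$, the element $(0,u_2)$ already has no such $k$. Nor can you rescue the step by writing $g\in G^+$ as $x_1+\cdots+x_n$ with $x_i\in M$, since in a non-commutative group $2\big((x_1/2)+\cdots+(x_n/2)\big)$ is generally not $x_1+\cdots+x_n$. This is exactly why the paper splits the proof into part (I) (linear case) and part (II), where it uses the product structure explicitly: each factor $G_i$ is two-divisible by (I), so $g/2=(g_i/2)_i$ exists in $\prod_i G_i$, and one checks it lies in the bounded subgroup $G$. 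Your ``global'' route does not supply a substitute for this step, and indeed the paper leaves the subdirect-product case as an open problem (Problem \ref{op:1}), which suggests no purely global argument of the kind you sketch is presently known.
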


\begin{proof}
Let $r$ be a strict root on $M$. We note that by \cite[Thm 5.6]{DvZa3}, $M$ is symmetric, $u/2$ exists and belongs to the center of $\C(G)$ of $G$, and by \cite[Thm 5.2]{DvZa3}, $r(x)=(x+u)/2$ for each $x\in M$. Since $M$ is representable, so is $G$, and by \cite[Page 26]{Anderson}, $G$ enjoys unique extraction of roots.

For each $x\in G$ if $x/2$ exists, then $(x/2+u/2)+(x/2+u/2)=x/2+x/2+u/2+u/2=x+u$ which implies that $(x+u)/2=x/2+u/2$.
Similarly, $(x+nu)/2=x/2+nu/2$.
Also, $-(x/2)-(x/2)=-x$ implies that $(-x)/2$ exists and is equal to $-x/2$. Consequently,
$(x-u)/2=x/2-u/2$.

(I) Let us first assume $M$ is linearly ordered; then so is $G$.
We show that for every $g\in G$, the element $g/2$ is defined in $G$.

(1) If $g\in [0,u]$, then $g\in M$, $r(g)=(g+u)/2$ is defined in $[0,u]$, and $(g+u)/2-u/2= (g+u-u)/2=g/2\in G$.

(2)  Assume $g\in G^+$ is such that $nu\le g \le (n+1)u$ for some integer $n\ge 0$. From
$0\leq g-nu\leq u$ and (1), it follows that $(g-nu)/2$ exists. Hence, $(g-nu)/2+n(u/2)=(g-nu)/2+nu/2=g/2$.

(3) If $g \in G^-$, then $-g\in G^+$ and $(-g)/2=-(g/2)$ is defined in $G$, consequently $g/2$ exists in $G$ and is unique.

Summarizing (1)--(3), $G$ is two-divisible.

(II) Now, let $M=\prod_{i\in I} M_i$, where each $M_i$ is a linearly ordered pseudo MV-algebra. Then, for each $i\in I$, $M_i\cong \Gamma(G_i,u_i)$ with a linearly ordered unital $\ell$-group $(G_i,u_i)$. Without loss of generality, we can assume $M_i=\Gamma(G_i,u_i)$. Define $r_i:M_i\to M_i$ by $r_i(x_i)=\pi_i(r(x))$, $x_i\in M_i$ and $x=(x_j)_j$, where $\pi_i$ is the $i$-th projection from $M$ onto $M_i$. By \cite[Prop 3.9]{DvZa3}, $r_i$ is a strict square root on $M_i$. Due to \cite[Thm 5.2]{DvZa3}, $u_i/2,u_i\in \C(G_i)$ for each $i\in I$. By part (I), every $G_i$ is two-divisible.

We describe the unital $\ell$-group $(G,u)$: Put
$u=(u_i)_i$, then
$$
G=\{g=(g_i)_i\in \prod_{i\in I}G_i\mid  \exists n\in \mathbb N: -nu\le g \le nu\}.
$$
Let $g\in G^+$, then $g=(g_i)_i$, where each $g_i\ge 0$. Therefore, $g_i/2\ge 0$ exists in $G_i$ for each $i\in I$, and  $g/2=(g_i/2)_i$ exists in $G$ while $0\le g/2\le g\le nu$. Now take an arbitrary $g\in G$. There is an integer $n\in \mathbb N$ such that $-nu\le g\le nu$. Then $g+nu\ge 0$ so that $(g+nu)/2$ is defined. Therefore, $(g+nu)/2-n(u/2)=(g+nu-nu)/2=g/2$ is defined in $G$.
\end{proof}

\begin{cor}\label{18-1}
Let $M=\Gamma(G,u)$ be a representable pseudo MV-algebra with strict square root. Then $(G,u)$ can be embedded into a two-divisible unital $\ell$-group.
\end{cor}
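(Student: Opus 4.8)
The plan is to reduce the statement to Theorem \ref{8.1} by passing from a subdirect to a full direct product. Since $M$ is representable, it embeds as a subdirect product $\iota\colon M\hookrightarrow \prod_{i\in I}M_i$, where each $M_i=M/P_i$ is a linearly ordered pseudo MV-algebra and each projection $\pi_i\colon M\to M_i$ is surjective. Writing $M_i=\Gamma(G_i,u_i)$ with $(G_i,u_i)$ a linearly ordered unital $\ell$-group, my first task is to equip $N:=\prod_{i\in I}M_i$ with a strict square root; then Theorem \ref{8.1} applies to $N$ directly.

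To put a strict square root on $N$ it suffices to do so componentwise, so I would first show that each quotient $M_i$ inherits a strict square root from $r$. Let $r$ be the strict square root on $M=\Gamma(G,u)$. By \cite[Thm 5.6]{DvZa3} the algebra $M$ is symmetric and $u/2\in\C(G)$, and by \cite[Thm 5.2]{DvZa3} we have $r(x)=(x+u)/2$, that is $2r(x)=x+u$ in $G$, for every $x\in M$. Applying $\pi_i$ and using that the central element $u/2$ maps to $u_i/2\in\C(G_i)$, I obtain $2\pi_i(r(x))=x_i+u_i$ in $G_i$, where $x_i=\pi_i(x)$. Since $G_i$ is linearly ordered it enjoys unique extraction of roots, so $\pi_i(r(x))$ depends only on $x_i$ and not on the chosen preimage $x$; hence $r_i(x_i):=\pi_i(r(x))=(x_i+u_i)/2$ is a well-defined map $M_i\to M_i$. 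It is a strict square root on $M_i$: this follows from \cite[Prop 3.9]{DvZa3}, and equivalently from the characterization \cite[Thm 5.2]{DvZa3} applied to the symmetric algebra $M_i$ (a quotient of the symmetric $M$), since $u_i/2\in\C(G_i)$ and $(x_i+u_i)/2$ exists for every $x_i\in M_i$ by surjectivity of $\pi_i$. Strictness is immediate, because $r_i(0)=u_i/2=u_i-u_i/2=r_i(0)^-$.

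Consequently the componentwise map $r'((x_i)_i):=(r_i(x_i))_i$ is a square root on $N=\prod_{i\in I}M_i$: the operations and the order of $N$ are defined componentwise, so (Sq1)--(Sq3) hold in $N$ because they hold in each factor. Moreover $r'(0)=(u_i/2)_i=r'(0)^-$, so $r'$ is strict. Thus $N$ is a direct product of linearly ordered pseudo MV-algebras carrying a strict square root, and Theorem \ref{8.1} yields that the unital $\ell$-group $(G',u'):=\Psi(N)$ (with $\Gamma(G',u')\cong N$) is two-divisible.

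Finally I transport the embedding to the $\ell$-group level. The subdirect map $\iota\colon M\hookrightarrow N$ is an injective homomorphism of pseudo MV-algebras; since $\Gamma$, with inverse $\Psi$, is a categorical equivalence \cite{Dvu1}, applying $\Psi$ to $\iota$ produces an injective homomorphism of unital $\ell$-groups $(G,u)\hookrightarrow (G',u')$. As $(G',u')$ is two-divisible, $(G,u)$ embeds into a two-divisible unital $\ell$-group, as claimed. The one genuinely delicate point is the middle step, namely verifying that the induced map $r_i$ on each linearly ordered quotient is well defined and strict; this is exactly where linear order (unique extraction of roots) together with the explicit formula $r(x)=(x+u)/2$ are indispensable, since a square root need not respect an arbitrary pseudo MV-congruence.
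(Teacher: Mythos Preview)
Your proof is correct and follows the same strategy as the paper: embed $M$ subdirectly into a product of chains, push the strict square root down to each factor via \cite[Prop 3.9]{DvZa3}, and invoke Theorem~\ref{8.1}. The only differences are cosmetic---you verify strictness of each $r_i$ directly from $r_i(0)=u_i/2$ whereas the paper rules out the case $|M_i|=2$ by a short contradiction argument, and you apply Theorem~\ref{8.1} to the full product $N$ rather than to each $G_i$ separately before assembling the bounded product group by hand; one small correction, though: your closing remark that ``a square root need not respect an arbitrary pseudo MV-congruence'' is inaccurate, since by \cite[Cor 3.10]{DvZa3} (and the very \cite[Prop 3.9]{DvZa3} you cite) square roots always descend along quotients by normal ideals, so the well-definedness of $r_i$ is not a delicate point at all.
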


\begin{proof}
Let $r$ be a strict square root on $M$.
Let $M_i=\Gamma(G_i,u_i)$ for each $i\in I$ be a linearly ordered pseudo MV-algebra, and $f:M\to M_0:=\prod_{i\in I}M_i$ be a subdirect embedding. Without loss of generality, we can assume that each $M_i$ is non-degenerate. By \cite[Prop 3.9]{DvZa3}, $r_i:M_i\to M_i$, defined by $r_i(\pi_i\circ f(x))= \pi_i\circ f(r(x))$ for all $x\in M$, is a square root on $M_i$.
Since $M_i$ is a chain,  by \cite[Cor 4.5 and 5.7(3)]{DvZa3}, $r_i$ is strict or $|M_i|=2$.
If $|M_i|=2$, then $M_i=\{0,1\}$, so by \cite[Thm 3.8]{DvZa3}, $r_i(0_i)=0_i$. On the other hand, since $r$ is strict we have $
r_i(0_i)=r_i(\pi_i\circ f(0))=\pi_i\circ f(r(0))=\pi_i\circ f(r(0)^-)=(\pi_i\circ f(r(0)))^-=r_i(0_i)^-=1_i$, which means $M_i$ is degenerate, that is absurd. So, $|M_i|\neq 2$ for all
$i\in I$.
It follows from Theorem \ref{8.1}, that for each $i\in I$, $G_i$ is two-divisible. If we define $G_0$ by
\begin{equation}\label{eq:G}
G_0=\{g=(g_i)_i\in \prod_{i\in I}G_i\mid  \exists n\in \mathbb N: -nu_0\le g \le nu_0\},
\end{equation}
where $u_0=(u_i)_i$, then $(G_0,u_0)$ is a two-divisible unital $\ell$-group in which $(G,u)$ can be embedded.
\end{proof}

Theorem \ref{8.1} entails the following question:

\begin{open}\label{op:1}
Does Theorem {\rm \ref{8.1}} hold if $M=\Gamma(G,u)$ is a subdirect product of linearly ordered pseudo MV-algebras?
\end{open}

Now, we show that Theorem \ref{8.1} holds for every MV-algebra with strict square root.

\begin{thm}\label{8.20}
Let $M=\Gamma(G,u)$ be an MV-algebra with a strict square root $r$, where
$(G,u)$ is a unital $\ell$-group. Then $G$ is two-divisible.
\end{thm}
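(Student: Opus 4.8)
The plan is to reduce to the explicit description of the strict square root and then to exploit the commutativity of $G$ (available because $M$ is an MV-algebra) to halve every element of $G$.

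As at the start of the proof of Theorem \ref{8.1}, \cite[Thm 5.6]{DvZa3} and \cite[Thm 5.2]{DvZa3} apply to the strict square root $r$ and give that $u/2$ exists in $G$ (here it is automatically central, since $G$ is Abelian) and that $r(x)=(x+u)/2$ for every $x\in M=[0,u]$; in particular $u/2=r(0)$. Hence for each $g\in[0,u]$ the element $r(g)=(g+u)/2$ lies in $G$, and
\[
g/2:=r(g)-u/2\in G,\qquad 2\cdot(g/2)=2r(g)-u=(g+u)-u=g,
\]
so halving is already available on the generating interval $[0,u]$.

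The decisive point, where the MV (Abelian) hypothesis replaces the direct-product hypothesis of Theorem \ref{8.1}, is that in an Abelian group doubling is additive, $2(a+b)=2a+2b$. I would use this to propagate halving from $[0,u]$ to all of $G^+$: for $g\in G^+$ choose $n$ with $g\le nu$ (possible since $u$ is a strong unit) and write the telescoping decomposition $g=\sum_{k=1}^{n}\bigl(g\wedge ku-g\wedge(k-1)u\bigr)$. Each summand lies in $[0,u]$, because $g\wedge ku-g\wedge(k-1)u\ge 0$ and the standard $\ell$-group inequality $a\wedge c-a\wedge b\le c-b$ (for $b\le c$) gives $g\wedge ku-g\wedge(k-1)u\le u$. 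Halving each summand and summing, additivity of doubling yields $g/2\in G$ with $2\cdot(g/2)=g$.

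Finally, for an arbitrary $g\in G$, since $u$ is a strong unit choose $n$ with $g+nu\ge 0$; then $(g+nu)/2$ exists by the previous step and $g/2:=(g+nu)/2-n(u/2)\in G$ satisfies $2\cdot(g/2)=(g+nu)-nu=g$. Thus every element of $G$ is divisible by $2$, i.e. $G$ is two-divisible. I expect the main obstacle to be exactly the step that Theorem \ref{8.1} handled through linearity: there one peeled off the interval part by comparing $g$ with consecutive multiples of $u$, which is unavailable for a general MV-algebra whose $G$ need not be a chain nor a direct product of chains. The thing to get right is that commutativity of $G$ makes $x\mapsto 2x$ additive, so that halving can be assembled additively from halving on the lattice-theoretic pieces $g\wedge ku-g\wedge(k-1)u\in[0,u]$; this is what lets the argument run for every MV-algebra with strict square root without assuming $M$ is a (sub)direct product of linearly ordered algebras.
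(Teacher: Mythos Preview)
Your proof is correct and follows the same three-step architecture as the paper: first halve on $[0,u]$ via the formula $r(x)=(x+u)/2$, then extend to $G^+$ by decomposing $g$ as a sum of elements of $[0,u]$ and using additivity of doubling in the Abelian group $G$, and finally pass to all of $G$. The only variation is in the middle step: where you use the explicit telescoping decomposition $g=\sum_{k=1}^{n}\bigl(g\wedge ku-g\wedge(k-1)u\bigr)$ (with the summands in $[0,u]$ by the $\ell$-group inequality you cite), the paper invokes the Riesz interpolation property to write $g=\sum_{i=1}^{n}x_i$ with $x_i\in[0,u]$; and for the last step the paper splits $g=g^+-g^-$ rather than shifting by $nu$. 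Your telescoping formula is in fact a concrete witness for the Riesz decomposition, so the two arguments are essentially the same.
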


\begin{proof}
By \cite[Thm 5.2]{DvZa3}, $r(x)=(x+u)/2$ for each $x\in M$.
So, $r(u-x)=(u-x+u)/2=x/2$ which means $x/2$ exists for all $x\in M$.
Now, let $g\in G^+$. Then there exists $n\in\mathbb N$ such that $g\leq nu$. The
Riesz interpolation property (see \cite[Thm 1.3.11]{Dar}) implies that
$g=\sum_{i=1}^n x_i$ where $0\leq x_i\leq u$. By the assumption, $x_i/2\in G$ for all $i\in\{1,2,\ldots,n\}$.
It follows that $g=\sum_{i=1}^n x_i=2(\sum_{i=1}^n (x_i/2))$, that is $g/2=\sum_{i=1}^n (x_i/2)\in G$.
Now, choose an arbitrary element $g\in G$.
Then $g=g^+-g^-$. Since $g^+,g^-\in G^+$, the elements $(g^+)/2,(g^-)/2$ exist.
It follows that $g=g^+-g^-=(g^+)/2-(g^-)/2+(g^+)/2-(g^-)/2$. Hence, $g/2$ exists, and $G$ is two-divisible.
\end{proof}

\begin{cor}\label{8.21}
Let $M=\Gamma(G,u)$ be an MV-algebra with a square root $r$, where
$(G,u)$ is a unital $\ell$-group. Then $G$ is isomorphic to the direct product of unital $\ell$-groups $(G_1,u_1)$ and $(G_2,u_2)$,
where $G_1$ is a subdirect product of copies of $(\mathbb Z,1)$, and $G_2$ is a two-divisible $\ell$-group.
\end{cor}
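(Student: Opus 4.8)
The plan is to reduce the statement to the classification of MV-algebras with square roots due to H\"ohle \cite{Hol}, combined with Theorem~\ref{8.20}, and then to transport the outcome across the categorical equivalence $\Gamma$ (with inverse $\Psi$) of \cite{Mun,Dvu1}. Since $M$ is an MV-algebra, the representing unital $\ell$-group $(G,u)$ is Abelian, so the Abelian version of the equivalence applies.

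First I would invoke H\"ohle's classification: as $M$ is an MV-algebra carrying a square root $r$, it splits as a direct product of MV-algebras $M\cong M_1\times M_2$, where $M_1=\B(M_1)$ is a Boolean algebra on which the induced square root is the identity, and $M_2$ is a strict MV-algebra (its induced square root $r_2$ satisfies $r_2(0)=r_2(0)^-$). One of the factors may be degenerate; the square roots on the factors are obtained by composing $r$ with the projections, as in \cite[Prop 3.9]{DvZa3}. Setting $G_i:=\Psi(M_i)$ with strong unit $u_i$, I would use that $\Gamma$ sends direct products of MV-algebras to direct products of unital $\ell$-groups: concretely $\Gamma(G_1\times G_2,(u_1,u_2))=[0,u_1]\times[0,u_2]=M_1\times M_2$, and $(u_1,u_2)$ is indeed a strong unit of $G_1\times G_2$. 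Hence $G\cong G_1\times G_2$ with $u=(u_1,u_2)$.

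For the strict factor, $M_2=\Gamma(G_2,u_2)$ is an MV-algebra with a strict square root, so Theorem~\ref{8.20} applies verbatim and yields that $G_2$ is two-divisible. For the Boolean factor, I would use that the only subdirectly irreducible Boolean algebra is the two-element one $\{0,1\}=\Gamma(\mathbb Z,1)$, so that $M_1$ admits a subdirect embedding $M_1\hookrightarrow \prod_{i}\{0,1\}$ into a power of $\{0,1\}$. Identifying this power concretely as $\prod_i\{0,1\}=\Gamma\big(G_0,(1,1,\dots)\big)$, where $G_0$ is the bounded product described in \eqref{eq:G}, and applying $\Psi$ (which preserves monomorphisms and sends the projections to the $\ell$-group projections), the subdirect embedding is transported to a subdirect embedding $G_1\hookrightarrow G_0$ into a power of $(\mathbb Z,1)$. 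Thus $G_1$ is a subdirect product of copies of $(\mathbb Z,1)$, which completes the argument.

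I expect the point needing the most care to be the decomposition into the Boolean and the strict factors, i.e.\ cleanly separating the two behaviours of $r$. If one prefers an argument internal to the present paper rather than citing \cite{Hol}, the natural route is to produce the governing central idempotent from the Boolean element $v:=r(0)^-\odot r(0)^-$ of Proposition~\ref{7.1} (which lies in $\B(M)$): since $v\in\B(M)$ it determines a direct decomposition $M\cong[0,v]\times[0,v^-]$, and one then checks, using Proposition~\ref{3.2.0}, that the induced square root is the identity on the Boolean factor $[0,v]\s\B(M)$ (there every $r(x)$ is Boolean, hence $r(x)=r(x)\odot r(x)=x$) and is strict on the complementary factor $[0,v^-]$ (note $v=0$ exactly in the strict case and $v=1$ exactly in the Boolean case). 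Transporting this decomposition through $\Psi$ and applying Theorem~\ref{8.20} and the subdirect representation of Boolean algebras then proceeds exactly as above.
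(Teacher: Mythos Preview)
Your proposal is correct and follows essentially the same approach as the paper: invoke H\"ohle's decomposition $M\cong M_1\times M_2$ into a Boolean and a strict factor, apply Theorem~\ref{8.20} to $M_2$ to obtain two-divisibility of $G_2$, and use the subdirect representation of the Boolean algebra $M_1$ by copies of $\{0,1\}=\Gamma(\mathbb Z,1)$ to realize $G_1$ as a subdirect product of copies of $(\mathbb Z,1)$. Your extra care in spelling out why $G\cong G_1\times G_2$ via the categorical equivalence, and your alternative route through the Boolean element $v=r(0)^-\odot r(0)^-$, are both fine and indeed the latter is exactly the internal mechanism behind the cited result \cite[Thm 2.21]{Hol} (cf.\ \cite[Thm 4.3]{DvZa3}).
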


\begin{proof}
By \cite[Thm 2.21]{Hol},  $M\cong M_1\times M_2$, where $M_1$ is a Boolean algebra and $M_2$ is an MV-algebra with a strict square root $s:M_2\to M_2$. Let $M_i=\Gamma(G_i,u_i)$, for $i=1,2$.
Since $M_2$ is strict, by Theorem \ref{8.20}, $G_2$ is two-divisible.
If $X$ is the set of all prime ideal of $M_1$, then $f:M_1\to \prod_{P\in X}M_1/P$, defined by $f(x)=(x/P)_{P\in X}$, is a subdirect embedding.
Clearly, $\prod_{P\in X}M_1/P\cong \prod_{P\in X}\{0,1\}=\prod_{P\in X}\Gamma(\mathbb Z,1)$.
Therefore, $G_1$ can be embedded in $\prod_{P\in X}\mathbb Z$.
\end{proof}

In the following example, we show that Theorem \ref{8.1} does not hold for MV-algebras with non-strict square root.

\begin{exm}\label{8.2}
Let $B=\{0,1\}$ with $0<1$ be a two-element Boolean algebra. By \cite{Hol,DvZa3}, $r:=\id_B$ is a square root on $B$ that is not strict.
Then $M=\Gamma(\mathbb Z,1)$, where $(\mathbb Z,1)$ is the unital $\ell$-group of integers with the strong unit $1$.
Clearly, $\mathbb Z$ is not two-divisible.
\end{exm}

\begin{prop}\label{8.3}
Let $M=\Gamma(G,u)$ be a representable pseudo MV-algebra with a square root $r$. If $G$ is two-divisible, then $r$ is strict.

Conversely, if $M$ is the direct product of linearly ordered pseudo MV-algebras, then $G$ is two-divisible.
\end{prop}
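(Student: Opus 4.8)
The plan is to prove the two assertions separately: the forward implication (two-divisibility of $G$ forces strictness of $r$) is the substantive part, while the converse reduces to Theorem~\ref{8.1}. Throughout I work inside the representing unital $\ell$-group, writing $x\odot y=(x^-\oplus y^-)^\sim$, $x^-=u-x$ and $x^\sim=-x+u$, and I set $m:=u/2$, which exists because $G$ is two-divisible.

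First I would verify that $m\in M$ and that it behaves like a square root of $0$. Since in any $\ell$-group $2g\ge 0$ is equivalent to $g\ge 0$, from $2m=u\ge 0$ we get $m\ge 0$; then $u-m=m\ge 0$ gives $m\le u$, so $m\in[0,u]=M$. The relation $m+m=u$ yields $m^-=m^\sim=m$, whence $m^-\oplus m^-=(m+m)\wedge u=u$ and therefore $m\odot m=u^\sim=0$. Applying (Sq2) to $m\odot m=0\le 0$ gives $m\le r(0)$, and since the operation $x\mapsto x^-$ is order-reversing we obtain $r(0)^-\le m$.

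Next I would bring in (Sq1). From $r(0)\odot r(0)=0$ and the definition of $\odot$ we get $r(0)^-\oplus r(0)^-=u$, that is $r(0)^-+r(0)^-\ge u=2m$. Combining this with $r(0)^-\le m$, which forces $r(0)^-+r(0)^-\le m+m=u$, we obtain $2\,r(0)^-=2m$. Because $M$ is representable, so is $G$ (exactly as in the proof of Theorem~\ref{8.1}), and representable $\ell$-groups enjoy unique extraction of roots; hence $r(0)^-=m=u/2$, and so $r(0)=u-r(0)^-=u/2=r(0)^-$, i.e.\ $r$ is strict. I expect the cancellation $2\,r(0)^-=2m\Rightarrow r(0)^-=m$ to be the only delicate point: in a non-representable $\ell$-group this step can fail, so representability is used here in an essential way.

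For the converse, once $r$ is strict the hypothesis that $M$ be a direct product of linearly ordered pseudo MV-algebras places us exactly in the setting of Theorem~\ref{8.1}, which yields that $G$ is two-divisible; the strictness of $r$ cannot be dropped here, as Example~\ref{8.2} demonstrates.
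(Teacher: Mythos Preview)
Your proof is correct, but the forward implication proceeds along a genuinely different line from the paper's argument. The paper works with the Boolean element $w=r(0)^-\odot r(0)^-$: it invokes \cite[Thm~4.3]{DvZa3} to know that $[0,w]$ is a Boolean subalgebra, then uses two-divisibility of $G$ (together with unique extraction of roots in the representable $G$) to show every $b\in[0,w]$ satisfies $b/2\in[0,w]$, hence $b/2=b/2\oplus b/2=b$, forcing $b=0$; thus $w=0$ and $r$ is strict. Your argument is more direct and self-contained: you pick $m$ with $2m=u$, observe $m\odot m=0$ so that (Sq2) gives $r(0)^-\le m$, and then sandwich $2r(0)^-$ between $u$ (from $r(0)^-\oplus r(0)^-=u$) and $2m=u$, concluding $r(0)^-=m$ by unique extraction of roots. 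Your route avoids the dependence on the structural theorem \cite[Thm~4.3]{DvZa3}; the paper's route, in turn, isolates the obstruction $w$ explicitly, which connects naturally to Proposition~\ref{8.4} and the general decomposition of non-strict square roots. One small point: the implication ``$2g\ge 0\Rightarrow g\ge 0$'' in an $\ell$-group that you use at the outset is standard (see e.g.\ \cite[Prop~3.6]{Dar}) but worth citing, since it is not entirely trivial in the non-Abelian case. For the converse both you and the paper simply invoke Theorem~\ref{8.1}.
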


\begin{proof}
Let $G$ be two-divisible. We claim that $w=r(0)^-\odot r(0)^-=0$. By Proposition \ref{3.2.0}(11), $w\in \B(M)$ and by the proof of Case 3 in \cite[Thm 4.3]{DvZa3},
$([0,w];\oplus,^{-_w},^{\sim_w},0,w)$ is a Boolean algebra, so $[0,w]\s \B(M)$. Choose $b\in [0,w]$. Since $G$ is two-divisible, $b/2\in G$ exists, and $G$ is a representable unital $\ell$-group (since $M$ is representable), so $G$ enjoys unique extraction of roots and $0\leq b/2\leq b\leq u$.
It follows that $b/2\in [0,w]$ and so $b/2=b/2\oplus b/2=(b/2+b/2)\wedge u=b\wedge u=b$. Consequently, $b=b/2=0$.
Therefore, $w=0$. Applying the proof of Case 2 in \cite[Thm 4.3]{DvZa3}, we have $r$ is strict.

The converse follows from Theorem \ref{8.1}.
\end{proof}

\begin{cor}\label{co:strict}
Let a pseudo MV-algebra $M=\Gamma(G,u)$ be a direct product of linearly ordered pseudo MV-algebras. The following statements are equivalent:
\begin{itemize}
\item[{\rm (i)}] The pseudo MV-algebra $M$ has a strict square root.
\item[{\rm (ii)}] The pseudo MV-algebra $M$ is two-divisible and $u/2\in \C(G)$.
\item[{\rm (iii)}] The $\ell$-group $G$ is two-divisible and $u/2\in \C(G)$.
\end{itemize}
In either case, $(x+u)/2$ is defined in $M$ for each $x\in M$, and $r(x)=(x+u)/2$, $x\in M$, is a strict square root on $M$.

\end{cor}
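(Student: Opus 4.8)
The plan is to prove the three-way equivalence as a cycle (ii) $\Rightarrow$ (i) $\Rightarrow$ (iii) $\Rightarrow$ (ii), so that the square root manufactured along the way simultaneously yields the asserted formula $r(x)=(x+u)/2$. Two of the implications are short. For (iii) $\Rightarrow$ (ii) I would simply invoke the remark from Section 2 that $n$-divisibility of $G$ always entails $n$-divisibility of $M=\Gamma(G,u)$; thus two-divisibility of $G$ forces two-divisibility of $M$, while $u/2\in\C(G)$ is literally the same condition. For (i) $\Rightarrow$ (iii), since $M$ is a direct product of chains and carries a strict square root, Theorem \ref{8.1} immediately gives that $G$ is two-divisible, and the remaining assertion $u/2\in\C(G)$ is exactly \cite[Thm 5.6]{DvZa3} (already quoted inside the proof of Theorem \ref{8.1}), which says a strict square root forces $M$ symmetric and $u/2$ to exist and lie in $\C(G)$.

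The heart of the argument is (ii) $\Rightarrow$ (i), where I must build a strict square root from the two hypotheses. The key preliminary point is that $\C(G)$ is a subgroup, so $u/2\in\C(G)$ yields $u=2\cdot(u/2)\in\C(G)$; hence $u$ is central and $x^-=u-x=-x+u=x^\sim$, i.e. $M$ is symmetric. Because $M$ is a direct product of chains it is representable, so $G$ is representable and therefore enjoys unique extraction of roots, while doubling preserves the order (automatic in any $\ell$-group) and, by representability and unique root extraction, also reflects it (checked componentwise on the linear factors). Two-divisibility of $M$ means each $x\in M$ has a unique half $x/2\in M$ with $x/2+x/2=x$; since $0\le x\le u$ gives $0\le x/2\le u/2$, I define
\begin{eqnarray*}
r(x):=x/2+u/2=(x/2)\oplus(u/2),\qquad x\in M,
\end{eqnarray*}
so $r(x)\in[0,u]$ and $r(0)=u/2$.

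I would then verify the axioms. Centrality of $u$ gives $a\odot a=(2a-u)\vee 0$ for $a\in M$, and since $2\,r(x)=x+u$ this produces $r(x)\odot r(x)=(x+u-u)\vee 0=x$, establishing (Sq1). For (Sq2), if $y\odot y=(2y-u)\vee 0\le x$ then $2y\le x+u=2\,r(x)$, and order-reflection of doubling gives $y\le r(x)$. For (Sq3), symmetry plus the central manipulation $r(x^-)=(u-x)/2+u/2=u-x/2=r(x)^-\oplus r(0)=r(x)\to r(0)$ (and identically for $\rightsquigarrow$) does the job; moreover $r(0)^-=u-u/2=u/2=r(0)$, so $r$ is strict. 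Closing the cycle (ii) $\Rightarrow$ (i) $\Rightarrow$ (iii) $\Rightarrow$ (ii) makes all three statements equivalent, and the explicit $r$ shows $(x+u)/2=x/2+u/2$ is defined and equals the unique square root, as claimed.

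I expect the main obstacle to be (Sq2): order-preservation of doubling is free in any partially ordered group, but order-reflection ($2y\le 2z\Rightarrow y\le z$) genuinely uses representability and unique root extraction and must be argued on the linearly ordered factors rather than in $G$ directly. A secondary point requiring care is the non-commutative bookkeeping in the $\odot$-computations; here the centrality of $u$ (inherited from $u/2\in\C(G)$) is precisely what makes $a\odot a=(2a-u)\vee 0$ and $r(x^-)=u-x/2$ go through cleanly.
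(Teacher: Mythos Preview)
Your proposal is correct and follows essentially the same route as the paper. The paper organizes the argument as two equivalences through (i) (citing \cite[Thm 5.6]{DvZa3} for (i)$\Rightarrow$(ii), Theorem \ref{8.1} for (i)$\Rightarrow$(iii), and \cite[Ex 3.7(iii)]{DvZa3} for (iii)$\Rightarrow$(i)), whereas you run a single cycle (ii)$\Rightarrow$(i)$\Rightarrow$(iii)$\Rightarrow$(ii); but the ingredients are the same---Theorem \ref{8.1}, \cite[Thm 5.6]{DvZa3}, and the explicit formula $r(x)=(x+u)/2$. The only substantive difference is that for (ii)$\Rightarrow$(i) you verify (Sq1)--(Sq3) by hand (correctly, including the order-reflection step for (Sq2) via representability), while the paper simply asserts that $r(x)=(x+u)/2$ is a strict square root, implicitly leaning on \cite[Ex 3.7(iii)]{DvZa3}; your version is therefore more self-contained at that point.
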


\begin{proof}
(i) $\Rightarrow$ (ii). It was established in \cite[Thm 5.6]{DvZa3}.

(ii) $\Rightarrow$ (i). If $x\in M$, then $x/2$ and $u/2$ are defined in $M$, therefore, $(x+u)/2=(x/2)+(u/2)$ exists in $M$, and the mapping $r(x)=(x+u)/2$, $x \in M$, is, in fact, a strict square root on $M$.

(i) $\Rightarrow$ (iii). By the equivalence (i) and (ii), we have $u/2\in \C(G)$. Theorem \ref{8.1} entails implication (i) $\Rightarrow$ (iii).

(iii) $\Rightarrow$ (i). The mapping $r(x)=(x+u)/2$, $x\in M$, is a strict square root on $M$; see \cite[Ex 3.7(iii)]{DvZa3}.
\end{proof}

The following result is a partial answer to the question posed in Problem \ref{op:1} above.

We note that a pseudo MV-algebra $M=\Gamma(G,u)$ is said to be {\em dense} in $G$, if for each $g\in G^+$, there exists $x\in M$ and $n\in \mathbb N$ such that $g=nx$.

\begin{thm}\label{dense2}
Let $M=\Gamma(G,u)$ be a representable pseudo MV-algebra, where
$(G,u)$ is a unital $\ell$-group. The following statements are equivalent:
\begin{itemize}[nolistsep]
\item[{\rm (i)}] The $\ell$-group $G$ is two-divisible and $u/2\in \C(G)$.
\item[{\rm (ii)}] The pseudo MV-algebra $M$ is dense in $G$ and $M$ has a strict square root.
\end{itemize}
\end{thm}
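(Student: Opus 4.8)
The plan is to prove the two implications separately, using the structural description of strict square roots from \cite{DvZa3} together with the density hypothesis as the device that transfers two-divisibility between $M$ and $G$. Throughout I will use that, since $M$ is representable, so is $G$, and hence $G$ enjoys unique extraction of roots and satisfies $2h\ge 0\Rightarrow h\ge 0$ (both checked componentwise in the subdirect representation by chains).

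For the implication (i) $\Rightarrow$ (ii), assume $G$ is two-divisible and $u/2\in\C(G)$. The strict square root is immediate: by \cite[Ex 3.7(iii)]{DvZa3} the map $r(x)=(x+u)/2$ is a square root on $M$, and it is strict because $r(0)=u/2=u-u/2=r(0)^-$. For density, I would take $g\in G^+$ and use that $u$ is a strong unit to fix $m\in\mathbb{N}$ with $g\le mu$. Two-divisibility lets me form $g/2^k$ for every $k$, and representability guarantees $g/2^k\ge 0$ and that halving preserves order, so from $g\le mu$ I get $g/2^k\le m(u/2^k)$. Choosing $k$ with $2^k\ge m$ yields $0\le g/2^k\le 2^k(u/2^k)=u$, hence $x:=g/2^k\in M$ and $g=2^k x$. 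Thus $M$ is dense in $G$.

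For (ii) $\Rightarrow$ (i), assume $M$ is dense in $G$ and carries a strict square root $r$. By \cite[Thm 5.2]{DvZa3} and \cite[Thm 5.6]{DvZa3}, $M$ is symmetric, $u/2$ exists with $u/2\in\C(G)$, and $r(x)=(x+u)/2$ for all $x\in M$; this already establishes the centrality assertion of (i). Because $u/2$ is central, for each $x\in M$ the element $x/2:=r(x)-u/2=r(x)-r(0)$ exists, since $(r(x)-u/2)+(r(x)-u/2)=2r(x)-u=x$. Hence every element of $[0,u]$ is two-divisible in $G$. This is exactly where density is needed: given $g\in G^+$, density supplies $y\in M$ and $n\in\mathbb{N}$ with $g=ny$, and then $g/2:=n(y/2)$ exists and halves $g$ because $2\bigl(n(y/2)\bigr)=n\bigl(2(y/2)\bigr)=ny=g$. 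For an arbitrary $g\in G$ I would write $g=g^+-g^-$ with $g^+,g^-\ge 0$ disjoint; the previous step produces $(g^+)/2$ and $(g^-)/2$, and since $G$ is representable these halves are again disjoint (in each linearly ordered component one of $g^+,g^-$ vanishes, so does its half), hence commute, and therefore $(g^+)/2-(g^-)/2$ halves $g$. Consequently $G$ is two-divisible, which completes (i).

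The main obstacle, and the crux of the whole statement, is the (ii) $\Rightarrow$ (i) direction, precisely because strictness of the square root by itself only yields halving inside the interval $[0,u]$. What genuinely upgrades this to all of $G^+$ is the density of $M$ in $G$ — the very hypothesis that is absent in the open subdirect-product case of Problem \ref{op:1}. The only non-commutative subtlety to watch is the final passage from $G^+$ to $G$, which I handle via the standard fact that disjoint elements of an $\ell$-group commute, applied to the halves of $g^+$ and $g^-$ in the representable group $G$.
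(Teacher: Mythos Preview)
Your proof is correct and follows essentially the same route as the paper: both directions use \cite[Ex 3.7(iii), Thm 5.2, Thm 5.6]{DvZa3} in the same places, establish density via repeated halving into $[0,u]$, and obtain two-divisibility of $G^+$ from density plus halving in $M$, then pass to all of $G$ via $g=g^+-g^-$. The only cosmetic difference is the final non-commutative step: the paper argues componentwise in a subdirect embedding that $(g^+)/2$ and $(g^-)/2$ satisfy $c-d=-d+c$, whereas you invoke the equivalent fact that their disjointness (checked componentwise) forces them to commute.
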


\begin{proof}
Let $g\in G$ and take the positive and negative parts of $g$: $g^+:=g\vee 0$ and $g^-:=-(g\wedge 0)= -g\vee 0$. Then $g^+\wedge g^-=0$ so that $g^++g^-=g^-+g^+$, and
$g+(-g\vee 0)=0\vee g=(-g\vee 0)+g$. That is, $g=g^+-g^-=-g^-+g^+$.

(i) $\Rightarrow$ (ii). Suppose that $G$ is two-divisible and $u/2\in \C(G)$. For each $g\in G^+$, there exists $n\in\mathbb N$ such that $g\leq nu\leq 2^nu$. It follows that
$0\leq g/2^n\leq u$. Hence $M$ is dense in $G$. Consider the mapping $r:M\to M$ defined by $r(x)=(x+u)/2$. By \cite[Exm 3.7(iii)]{DvZa3},
$r$ is a square root on $M$. In addition, $r(0)=u/2=u-r(0)=r(0)^-$, whence $r$ is strict.

(ii) $\Rightarrow$ (i). Assume that $M$ is dense in $G$ and $r:M\to M$ is a strict square root on $M$.
There exist $m,n\in \mathbb N$ such that $g^+=nx$ and $g^-=my$ for some $x,y\in M$.
Since $r$ is strict, by the first step in the proof of Theorem \ref{8.1},
$x/2,y/2\in M$. We have $g^+=nx=n(x/2+x/2)=n(x/2)+n(x/2)$ and $g^-=my=m(y/2+y/2)=m(y/2)+m(y/2)$.
The elements $c=g^+/2$ and $d=g^-/2$ exist in $G$.

On the other hand, since $M$ is representable, $G$ also is representable. Let $h:M\to \prod_{i\in I} M_i$ be a subdirect embedding, where $\{M_i\mid  i\in I\}$ is a family of linearly ordered pseudo MV-algebras. Suppose that $M_i=\Gamma(G_i,u_i)$ for all $i\in I$, where $G_i$ an $\ell$-group with the strong unit $u_i$. Then $\{(G_i,u_i)\mid  i\in I\}$ is a family of linearly ordered unital $\ell$-groups, and let $f:G\to \prod_{i\in I}G_i$ be a subdirect embedding.

We have $f(g^+)=f(g)\vee f(0)=f(g)\vee (0)_{i\in I}$ and $f(g^-)=-f(g)\vee f(0)=-f(g)\vee (0)_{i\in I}$.
Suppose that $f(g)=(g_i)_{i\in I}$, $f(g^+)=(g^+_i)_{i\in I}$, $f(g^-)=(g^-_i)_{i\in I}$, $f(c)=(c_i)_{i\in I}$ and $f(d)=(d_i)_{i\in I}$.
The linearity of $G_i$ gives $g^+_i\neq 0 \Leftrightarrow g_i>0 \Leftrightarrow g^-_i=0$ and $g^-_i\neq 0 \Leftrightarrow g_i<0 \Leftrightarrow g_i^+=0$.
Since $2c_i=g^+_i$,  $2d_i=g^-_i$ and $G_i$ is a chain, $c_i\neq 0\Leftrightarrow g^+_i\neq 0$ and $d_i\neq 0\Leftrightarrow g^-_i\neq 0$ for all $i\in I$. Hence, $c_i\neq 0 \Leftrightarrow d_i=0$, which yields $c_i+d_i=d_i+c_i$ and therefore, we have $c_i-d_i=-d_i+c_i$.
It follows that $f(c-d)=f(c)-f(d)=(c_i)_{i\in I}-(d_i)_{i\in I}=-(d_i)_{i\in I}+(c_i)_{i\in I}=-f(d)+f(c)=f(-d+c)$
consequently, $c-d=-d+c$. Whence $2(c-d)=2c-2d=g^+-g^-=g$. That is, $g/2$ exists in $G$. Therefore, $G$ is two-divisible.
\end{proof}

In \cite[Thm 5.6]{DvZa3}, we showed that if $M=\Gamma(G,u)$ is a representable pseudo MV-algebra with a strict square root $r$, then
$u/2$ exists, belongs to $\C(G)$, and $r(0)=u/2$. A similar result for the general case is as follows:

\begin{prop}\label{8.4}
Let $r$ be an arbitrary square root on a representable pseudo MV-algebra $M=\Gamma(G,u)$ and $w=r(0)^-\odot r(0)^-$.
Then $(u-w)/2$ exists, is equal to $r(0)$, and for all $x\in M$, $x+(u-w)/2=(u-w)/2+x$. In particular, $(u-w)/2=r(0)=(-w+u)/2\in \C(G)$.
\end{prop}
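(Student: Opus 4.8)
The plan is to establish the central assertion $r(0)\in\C(G)$ first, since the remaining claims then follow by a short computation. Write $a:=r(0)$ and work with $M=\Gamma(G,u)$. Because $M$ is representable, so is $G$, and I would fix a subdirect embedding $f\colon G\to\prod_{i\in I}G_i$ with each $(G_i,u_i)$ a linearly ordered unital $\ell$-group, together with the induced subdirect embedding of $M$ into $\prod_{i\in I}\Gamma(G_i,u_i)$. Exactly as in the proof of Corollary \ref{18-1}, for each $i$ the rule $r_i(\pi_i\circ f(x)):=\pi_i\circ f(r(x))$ defines, by \cite[Prop 3.9]{DvZa3}, a square root $r_i$ on the chain $M_i=\Gamma(G_i,u_i)$; taking $x=0$ gives $f(a)_i=\pi_i\circ f(r(0))=r_i(0_i)$.

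Next I would run the dichotomy for square roots on a chain: by \cite[Cor 4.5 and 5.7(3)]{DvZa3}, for each $i$ either $r_i$ is strict or $|M_i|=2$. If $r_i$ is strict, then \cite[Thm 5.2 and Thm 5.6]{DvZa3} give $r_i(0_i)=u_i/2\in\C(G_i)$; if $|M_i|=2$, then $M_i=\{0_i,1_i\}$ and \cite[Thm 3.8]{DvZa3} yields $r_i(0_i)=0_i$, which is trivially central in $G_i$. In either case $f(a)_i=r_i(0_i)$ commutes with every element of $G_i$, so $f(a)$ commutes componentwise with every element of $\prod_{i\in I}G_i$, in particular with every element of $f(G)$. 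Since $f$ is injective, $a=r(0)\in\C(G)$. This already delivers $x+r(0)=r(0)+x$ for all $x\in M$.

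It then remains to identify $w$. With $a\in\C(G)$ we have $a^-=u-a=-a+u=a^\sim$ and $(a^-)^-=a$, while $r(0)\odot r(0)=0$ (Sq1) forces $a\le a^-$, that is $2a\le u$. Using $x\odot y=(x^-\oplus y^-)^\sim$,
\[
w=a^-\odot a^-=(a\oplus a)^\sim=(2a\wedge u)^\sim=(2a)^\sim=u-2a,
\]
where the fourth equality uses $2a\le u$. Consequently $u-w=2a$ and, by centrality, $-w+u=2a$ as well, so both $(u-w)/2$ and $(-w+u)/2$ exist and equal $a=r(0)\in\C(G)$, as claimed.

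I expect the centrality step to be the crux: once $r(0)$ is known to be central, the computation of $w$ is routine, but proving $r(0)\in\C(G)$ genuinely requires the subdirect representation into chains together with the classification of square roots on chains from \cite{DvZa3}. The only points needing care are the bookkeeping of the induced square roots $r_i$ and the fact that a strict square root on a chain pins down $r_i(0_i)=u_i/2$ inside the center of $G_i$.
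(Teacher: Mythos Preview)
Your proof is correct and follows the same overall architecture as the paper's---reduce to chains via a subdirect representation and then lift---but the execution differs in two worthwhile ways. First, you reverse the order: the paper computes $r(0)=(u-w)/2=(-w+u)/2$ directly from the Boolean identities for $w$ (using $w\oplus r(0)=r(0)^-$ and $w\odot r(0)=0$) and only afterwards proves centrality, whereas you establish $r(0)\in\C(G)$ first and then read off $w=u-2r(0)$ by a routine computation exploiting centrality. Second, and more substantively, your handling of the chain case is shorter: instead of the paper's direct three-case analysis on the position of $x+(u-w)/2$ relative to $u$ (their part (I)), you invoke the dichotomy \cite[Cor 4.5]{DvZa3} to split each chain factor into the strict case, where \cite[Thm 5.6]{DvZa3} already gives $r_i(0_i)=u_i/2\in\C(G_i)$, and the two-element Boolean case, where $r_i(0_i)=0_i$ is trivially central. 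This buys you a cleaner argument at the cost of importing the classification of square roots on chains; the paper's proof of part (I) is more self-contained but longer. Both routes pass through the same subdirect lifting step, and your centrality transfer (``$f(a)$ commutes with every component, hence with $f(G)$, hence $a\in\C(G)$ by injectivity'') is exactly what the paper's part (II) does with the $\hat f$ extension.
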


\begin{proof}
Since $w\in \B(M)$, $w\vee r(0)=w\oplus r(0)=(r(0)^-\odot r(0)^-)\oplus r(0)=r(0)^-\vee r(0)=r(0)^-$. Also, by Proposition \ref{3.2.0}(11), $w\odot r(0)=r(0)\odot w=0$. Thus, $w+r(0)=w\oplus r(0)=r(0)^-$ entails that $r(0)^--r(0)=w$. Similarly, $r(0)+w=r(0)\oplus w=r(0)^-$. From $u=r(0)^-+r(0)=w+r(0)+r(0)$ we get that $-w+u=2r(0)$ and so $r(0)=(-w+u)/2$ exists. On the other hand, by \cite[Lem 3.15]{DvZa3}, $u=r(0)+r(0)^\sim=r(0)+r(0)^-=r(0)+r(0)+w$. Thus, $u-w=2r(0)$ and $r(0)=(u-w)/2$. Consequently, $(u-w)/2=r(0)=(-w+u)/2$.

(I) First, assume that $M$ is a chain. Choose $x\in M$.

(1) If $x+(u-w)/2< u$, by \cite[Prop 3.5(vi)]{DvZa3}, we have $x+(u-w)/2=x\oplus (u-w)/2=x\oplus r(0)=r(0)\oplus x=(r(0)+x)\wedge u$. Since $M$ is a chain, we have $r(0)+x\le u$, and whence $x+(u-w)/2=(r(0)+x)\wedge u=r(0)+x=(u-w)/2+x$.

(2) If $x+(u-w)/2=u$, then $x+r(0)=u$ and so $x=u-r(0)=r(0)^-=r(0)^\sim=-r(0)+u=-((u-w)/2)+u$. It follows that $(u-w)/2+x=u$.

(3) If $u<x+(u-w)/2$, then for $y=x-((u-w)/2)$, we have $y+(u-w)/2=x$, so $0\leq y+(u-w)/2\leq u$. Hence, by the first parts of the proof, $x=x-((u-w)/2)+(u-w)/2=y+(u-w)/2=(u-w)/2+y=(u-w)/2+x-((u-w)/2)$ which implies that $x+(u-w)/2=((u-w)/2)+x$.

(II) Now, let $X:=X(M)$ be the set of all proper normal prime ideals of $M$.
Then $r_P:M/P\to M/P$ defined by $r_P(x/P)=r(x)/P$ ($x\in M$) is a square root on $M/P$; see \cite[Prop 3.9]{DvZa3}. Given $P\in X$, let $\hat P$ be the $\ell$-ideal of $G$ generated by $P$. Then, due to the representation theorem of pseudo MV-algebras by unital $\ell$-groups, \cite{Dvu1}, $M/P=\Gamma(G/\hat P, u/P)$, and since $M/P$ is a chain, then $G/\hat P$ is a linearly ordered group.  The map $f:M\to\prod_{P\in X}M/P$, $f(x)=(x/P)_{P\in X}$, is a subdirect embedding, and $\hat f: (G,u)\to \prod_{P\in X}(G/\hat P,u/P)$ is also a subdirect embedding; it is an extension of $f$.
Then $f(r(x))=(r_P(x/P))_{P\in X}$, so that $f(r(0))=(r(0)/P)_{P\in X}$ and  $f(w)= (w/P)_{P\in X} = (w_P)_{P\in X}$, where $w_P= r_P(0/P)^-\odot r_P(0/P)^-$. Due to part (I), we have $(x/P)+_P (u/P-_Pw_P)/2=(u_P-_P w/P)/2+_P (x/P)$ for each $x\in M$, where $+_P$ and $-_P$ are the addition and subtraction, respectively, in $G/\hat P$. Therefore,
\begin{eqnarray*}
\hat f(x+ ((u-w)/2))&=&\hat f(x)+ \hat f((u-w)/2)\\
&=& \big(x/P\big)_{P\in X}+_P\big((u_P-_Pw/P)/2\big)_{P\in X}\\
&=& \big(x/P+_P(u_P-_Pw/P)/2\big)_{P\in X}\\ &=&\big((u/P-_Pw/P)/2+_Px/P\big)_{P\in X}\\
&=&\hat f((u-w)/2)+ \hat f(x)= \hat f((u-w)/2)+ x),
\end{eqnarray*}
giving $x+(u-w)/2=(u-w)/2+x$ for each $x\in M$ as stated.

Since, every $x\in G^+$ is of the form $x=x_1+\cdots+x_n$ for some $x_1,\ldots,x_n \in M$, we have $x+(u-w)/2=(u-w)/2+x$. The equality holds for each $x\in G^-$. If $x=g\in G$, then the equality holds for both $g^+$ and $g^-$, and finally, for each $g\in G$.
\end{proof}

Consequently, the latter proposition says that if $r$ is a square root on a representable pseudo MV-algebra $M=\Gamma(G,u)$, the element $r(0)=(u-w)/2=(-w+u)/2 \in \C(G)$. If $r$ is strict, equivalently, $w=0$, then $r(0)=u/2\in \C(G)$ as it was established in \cite[Thm 5.6]{DvZa3}, and Proposition \ref{8.4} generalizes the situation known only for strict square roots.

In \cite[Prob 4.7]{DvZa3}, we proposed a question whether the class of pseudo MV-algebras with square roots satisfying
$r(x)\odot r(x)\leq r(x\odot y)$ for all $x,y\in M$ is a proper subvariety of the variety of pseudo MV-algebras with square roots. In the sequel, we give a partial answer to it.

\begin{prop}\label{8.5}
Let $\mathcal V$ be the variety of pseudo MV-algebras with square roots satisfying the inequality
\begin{eqnarray}
\label{Eq8.5} r(x)\odot r(y)\leq r(x\odot y).
\end{eqnarray}
Then $\mathcal V$ properly contains the variety $\mathcal W$ of MV-algebras with square roots. In addition, each representable symmetric pseudo MV-algebra with square root is contained in $\mathcal V$.
\end{prop}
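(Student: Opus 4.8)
The plan is to prove the two assertions of the statement separately: first the inclusion $\mathcal W\s\mathcal V$ (which also carries the properness once a non-commutative witness is found), and then the clause about representable symmetric algebras. Both will be reduced to a computation in linearly ordered factors. For the inclusion $\mathcal W\s\mathcal V$, I would use that in an MV-algebra $\odot$ is commutative: for a square root $r$ and all $x,y$, (Sq1) gives
\[
(r(x)\odot r(y))\odot (r(x)\odot r(y))=r(x)\odot r(x)\odot r(y)\odot r(y)=x\odot y,
\]
and then (Sq2) yields $r(x)\odot r(y)\le r(x\odot y)$. This is precisely the argument of Corollary \ref{7.3}, so every MV-algebra with square root lies in $\mathcal V$, giving $\mathcal W\s\mathcal V$.

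For the ``in addition'' clause, let $M$ be representable and symmetric with square root $r$, and present $M$ as a subdirect product of linearly ordered $M_i=\Gamma(G_i,u_i)$. Symmetry ($x^-=x^\sim$) and the inequality \eqref{Eq8.5} are equational conditions, hence preserved and reflected by subdirect products; by \cite[Prop 3.9]{DvZa3} the square root descends to a square root $r_i$ on each chain $M_i$. Thus it suffices to verify \eqref{Eq8.5} in every $M_i$. By \cite[Cor 4.5, 5.7(3)]{DvZa3} each $r_i$ is either the identity on the two-element Boolean algebra, where \eqref{Eq8.5} holds as an equality, or strict; in the strict case \cite[Thm 5.2, 5.6]{DvZa3} give that $M_i$ is symmetric, $u_i/2\in\C(G_i)$, $G_i$ is two-divisible (Theorem \ref{8.1}), and $r_i(z)=(z+u_i)/2=z/2+u_i/2$.

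The delicate point, and the step I expect to be the main obstacle, is \eqref{Eq8.5} for a strict square root on a possibly non-commutative chain. Working in $G_i$, where $u_i$ is central by symmetry, the $\ell$-group description of $\odot$ gives $x\odot y=(y+x-u_i)\vee 0$, while $r_i(z)=z/2+u_i/2$ yields $r_i(x)\odot r_i(y)=(y/2+x/2)\vee 0$ and, on the region $y+x\ge u_i$, $r_i(x\odot y)=(y+x)/2$. Hence \eqref{Eq8.5} reduces to the halving inequality $y/2+x/2\le (y+x)/2$ in $G_i$. This is exactly the identity analysed in the discussion preceding Theorem \ref{8.1}, where it is tied to commutativity; reconciling it with the non-commutativity of $G_i$ (the reversal $x\odot y\leftrightarrow y+x$ is what makes the two sides asymmetric) is the heart of the matter and the hardest part of the argument.

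Finally, properness requires exhibiting a non-commutative member of $\mathcal V$, i.e. a pseudo MV-algebra in $\mathcal V$ that is not an MV-algebra. The representable symmetric algebras just placed in $\mathcal V$ are the natural source: a symmetric, representable, non-commutative algebra built from a lexicographic product such as $\Gamma(\mathbb Q\lex G,(1,0))$ with $G$ a suitable two-divisible ordered $\ell$-group would give a member of $\mathcal V\setminus\mathcal W$ and hence $\mathcal W\subsetneq\mathcal V$. Since such a witness lies in $\mathcal V$ only if \eqref{Eq8.5} survives in its non-commutative chain factors, the properness claim and the main obstacle stand or fall together, and I would verify the halving inequality for the chosen example before asserting the strict inclusion.
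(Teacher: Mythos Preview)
Your strategy coincides with the paper's: establish $\mathcal W\subseteq\mathcal V$ via commutativity of $\odot$ (the paper invokes Proposition~\ref{3.2.0}(8), which is the same computation), then reduce the representable symmetric case to chains by a subdirect embedding and compute there with $r(z)=(z+u)/2$. You are right to flag the halving step as the crux. For a strict chain with $u$ central, the paper asserts
\[
r(x)\odot r(y)=\big((x+u)/2-u+(y+u)/2\big)\vee 0=(x+y)/2,
\]
but doubling the middle expression gives $r(x)+r(y)+r(x)+r(y)-2u$, which equals $x+y=2r(x)+2r(y)-2u$ only when $r(x)$ and $r(y)$ commute. As the discussion preceding Theorem~\ref{8.1} makes explicit, the inequality $a/2+b/2\le(a+b)/2$ holding for all $a,b$ already forces the group to be Abelian. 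So the step you isolate as ``the main obstacle'' is not merely hard---it fails in the non-commutative setting, and the paper glosses over exactly the point you worried about.

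Concretely, in Example~\ref{8.5.1} take $q=(1/2,0,1,0)$ and $p=(1/2,1,0,0)$. Then $r(q)=(3/4,0,1/2,0)$, $r(p)=(3/4,1/2,0,0)$, and (with the formula the paper's own computation employs, $a\odot b=(a-u+b)\vee 0$) one finds $r(q)\odot r(p)=(1/2,1/2,1/2,0)$, while $q\odot p=(0,1,1,0)$ and $r(q\odot p)=(1/2,1/2,1/2,-1/8)$; hence $r(q)\odot r(p)\not\le r(q\odot p)$. Thus inequality~\eqref{Eq8.5} fails for this symmetric linearly ordered algebra, so the ``in addition'' clause is not established by the paper's argument, and the route to $\mathcal W\subsetneq\mathcal V$ via Example~\ref{8.5.1} collapses with it. Your caution---to verify the halving inequality in a concrete witness before asserting strict inclusion---was exactly right.
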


\begin{proof}
According to Proposition \ref{3.2.0}(8), we know that $\mathcal W\s \mathcal V$.

First, assume that $M$ is a linearly ordered pseudo MV-algebra with a square root $r$. If $r(0)=0$, then $M$ is a Boolean algebra and $r$ is an identity map, which means inequality (\ref{Eq8.5}) holds. Otherwise, by \cite[Thms 5.1, 5.6]{DvZa3}, $M$ is two-divisible and symmetric with a square root $r(x)=(x+u)/2$ for all $x\in M$.  We have
\begin{eqnarray*}
r(x)\odot r(y)&=&((x+u)/2-u+(y+u)/2)\vee 0=(x+y)/2,\\
r(x\odot y)&=&(((x-u+y)\vee 0)+u)/2=((x+y)\vee u)/2=(x+y)/2\vee u/2, \mbox{ since $M$ is a chain}\\
&=& (r(x)\odot r(y))\vee r(0).
\end{eqnarray*}
Therefore, $M\in\mathcal V$ implies $\mathcal W$ is a proper subvariety of $\mathcal V$.

Let $M$ be a symmetric representable pseudo MV-algebra with a square root $r$ and $X$ be the set of all
normal prime ideals of $M$. Consider the subdirect embedding $f:M\to\prod_{P\in X} M/P$ defined by $f(x)=(x/P)_{P\in X}$.
By \cite[Prop 3.9]{DvZa3}, the onto homomorphism $\pi_P\circ f:M\to M/P$ induces a square root $r_P:M/P\to M/P$ defined by
$r_P(x/P)=r(x)/P$ for all $x\in M$. By the first part, $r_P(x/P\odot y/P)=(r_P(x/P)\odot r_P(y/P))\vee r_P(0/P)$.
It follows that
\begin{eqnarray*}
f\big((r(x)\odot r(y))\vee r(0)\big)&=& \big(f(r(x))\odot f(r(y))\big)\vee f(r(0))=\big((\frac{r(x)}{P})_{P\in X}\odot (\frac{r(y)}{P})_{P\in X}\big)\vee (\frac{r(0)}{P})_{P\in X}\\
&=& \big((r_P(\frac{x}{P}))_{P\in X}\odot (r_P(\frac{y}{P}))_{P\in X}\big)\vee (r_P(\frac{0}{P}))_{P\in X}\\
&=& \big((r_P(x/P)\odot r_P(y/P))\vee r_P(0/P) \big)_{P\in X}=\big(r_P(x/P\odot y/P)\big)_{P\in X}\\
&=& \big(r(x\odot y)/P\big)_{P\in X}=f(r(x\odot y)).
\end{eqnarray*}
So, $(r(x)\odot r(y))\vee r(0)=r(x\odot y)$, which entails that $M\in \mathcal V$.
According to Proposition \ref{3.2.0}(8), we know that $\mathcal W\s \mathcal V$.
\end{proof}

We present an example of a linearly ordered pseudo MV-algebra $M\in \mathcal V \setminus \mathcal W$.

\begin{exm}\label{8.5.1}
Let $\mathbb Q$ be the set of all rational numbers and $G=\mathbb Q\lex \mathbb Q\lex \mathbb Q\lex \mathbb Q$.
Consider the following binary operation on $G$:
\begin{eqnarray}
\label{8.5e1} (a,b,c,d)+(x,y,z,w)=(a+x,b+y,c+z,d+w+bz),\quad \forall (a,b,c,d),(x,y,z,w)\in G.
\end{eqnarray}
Similarly to \cite[Page 138, E41]{Anderson}, we can show that $(G;+,(0,0,0,0))$ is a linearly ordered group, where $-(a,b,c,d)=(-a,-b,-c,-d+bc)$.
Clearly, $G$ is not Abelian.
The element $u=(1,0,0,0)$ is a strong unit of $G$: Indeed, if $(a,b,c,d)\in G^+$, then for each integer $n>1+\max\{|a|,|b|,|c|,|d|\}$ we have $nu=(n,0,0,0)>(a,b,c,d)$. In addition, $G$ is two-divisible:
For each $(a,b,c,d)$ consider the element $(a/2,b/2,c/2,(4d-bc)/8)\in G$. Then
$(a/2,b/2,c/2,(4d-bc)/8)+(a/2,b/2,c/2,(4d-bc)/8)=(a,b,c,(4d-bc)/4+bc/4)=(a,b,c,d)$.
On the other hand, $u/2=(1/2,0,0,0)$ and
$u/2+(a,b,c,d)=(1/2,0,0,0)+(a,b,c,d)=(1/2+a,b,c,d)=(a,b,c,d)+(1/2,0,0,0)=(a,b,c,d)+u/2$ which means $u/2\in \C(G)$ (consequently, $u\in \C(G)$).
By \cite[Exm 3.7]{DvZa3}, $M=\Gamma(G,u)$ is a pseudo MV-algebra with a square root $r$ defined by $r(x)=(x+u)/2$ for all $x\in M$.
By Proposition \ref{8.5}, $M\in \mathcal V$, but $M\notin\mathcal W$.
\end{exm}

\begin{lem}\label{8.6.1}
Let $r$ be a square root on a pseudo MV-algebra $(M;\oplus,^-,^\sim,0,1)$ and $S\s M$. The following properties hold:
\begin{itemize}[nolistsep]
\item[{\rm (i)}] If $\bigwedge S$ exists, then $\bigwedge r(S)$ exists and is equal to $r(\bigwedge S)$.
\item[{\rm (ii)}] If $\bigvee S$ exists, then $\bigvee r(S)$ exists and is equal to $r(\bigvee S)$.
\end{itemize}
\end{lem}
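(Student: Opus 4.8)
The plan rests on two order-theoretic facts about a square root $r$. First, $r$ is monotone by Proposition \ref{3.2.0}(2). Second, $r$ is \emph{order reflecting}: if $r(x)\le r(y)$ then, since $\odot$ is isotone in both arguments, $x=r(x)\odot r(x)\le r(y)\odot r(y)=y$ by (Sq1). Together these say that $r$ is an order embedding, and since its image is the interval $[r(0),1]$ (Proposition \ref{3.2.0}(11)(c)), $r$ is an order isomorphism of $M$ onto $[r(0),1]$. I will use these repeatedly, together with the fact from Proposition \ref{3.2.0}(10) that $r(0)\le x$ forces $r(x\odot x)=x$.

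For part (i), write $m=\bigwedge S$. Monotonicity gives $r(m)\le r(s)$ for each $s\in S$, so $r(m)$ is a lower bound of $r(S)$. If $t$ is any lower bound, then $t\le r(s)$ and isotonicity of $\odot$ yield $t\odot t\le r(s)\odot r(s)=s$ for every $s\in S$ by (Sq1); hence $t\odot t\le m$, and (Sq2) gives $t\le r(m)$. Thus $r(m)=\bigwedge r(S)$. I note that this argument invokes only monotonicity, (Sq1), and (Sq2), so part (i) in fact holds for every weak square root.

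For part (ii), write $j=\bigvee S$; monotonicity again makes $r(j)$ an upper bound of $r(S)$. Let $t$ be an arbitrary upper bound. Because $0\le s$ forces $r(0)\le r(s)\le t$ for any fixed $s\in S$, we have $r(0)\le t$, so Proposition \ref{3.2.0}(10) gives $t=r(t\odot t)$; that is, $t$ lies in the image of $r$. Now for each $s\in S$, from $r(s)\le t=r(t\odot t)$ and order reflection we get $s\le t\odot t$, hence $j\le t\odot t$ and, by monotonicity, $r(j)\le r(t\odot t)=t$. Therefore $r(j)=\bigvee r(S)$.

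The delicate point is part (ii): (Sq1) and (Sq2) only let one bound elements of the form $r(x)$ from above, and give no handle on bounding $r(j)$ itself, so the elementary argument of part (i) does not carry over. What rescues the proof is that every upper bound of $r(S)$ automatically sits above $r(0)$ and is therefore a value of $r$ (Proposition \ref{3.2.0}(10), (11)(c)); order reflection then does the rest. This surjectivity is exactly why part (ii) needs a genuine square root rather than merely a weak one. Finally, both statements should be read for nonempty $S$: for (i) the empty case holds vacuously since $r(1)=1$, whereas for (ii) the identity would reduce to the generally false $r(0)=0$.
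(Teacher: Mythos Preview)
Your proof is correct and follows essentially the same line as the paper's: part (i) is identical, and for part (ii) both arguments hinge on showing that any upper bound $t$ of $r(S)$ lies in the image of $r$, then using that $r$ is order-reflecting. The only cosmetic difference is that you produce the preimage explicitly as $t\odot t$ via Proposition~\ref{3.2.0}(10), whereas the paper invokes the surjectivity $r(M)=[r(0),1]$ from Proposition~\ref{3.2.0}(11)(c) and then appeals to Proposition~\ref{3.2.0}(7) for order reflection; your observation that (i) needs only a weak square root and your remark on the nonemptiness of $S$ in (ii) are welcome additions not made explicit in the paper.
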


\begin{proof}
(i) By Proposition \ref{3.2.0}(2), for each $s\in S$, we have $r(\bigwedge S)\leq r(s)$. Let $x\in M$ be a lower bound for
$r(S)$. Then $x\leq r(s)$ for all $s\in S$ and so $x\odot x\leq r(s)\odot r(s)=s$. It follows that $x\odot x\leq \bigwedge S$, which implies that
$x\leq r(\bigwedge S)$ (by (Sq2)). Hence, $r(\bigwedge S)$ is the greatest lower bound of $r(S)$, that is, $\bigwedge r(S)=r(\bigwedge S)$.

(ii) Clearly, $r(\bigvee S)$ is an upper bound for the set $r(S)$. Let $b\in M$ be such that $r(s)\leq b$ for all $s\in S$.
Then $b\in [r(0),1]$. According to Proposition \ref{3.2.0}(11), we know that $r(M)=[r(0),1]$, so there exists $a\in M$ such that
$r(a)=b$ which implies that $r(s)\leq r(a)$ for all $s\in S$.
From Proposition \ref{3.2.0}(7), we get that $r(a\wedge x)=r(a)\wedge r(x)=r(x)$, whence $a\wedge x=x$ and so $s\leq a$ for all $s\in S$
(since $r$ is a one-to-one ordered-preserving map). Hence, $\bigvee S\leq a$ and $r(\bigvee S)\leq r(a)=b$.
Therefore, $r(\bigvee S)$ is the least upper bound of the set $r(S)$.
\end{proof}

Recall that, for each Boolean element $a$ of a pseudo MV-algebra $(M;\oplus,^-,^\sim,0,1)$, the set $[a,1]$ is closed under $\oplus$ and $([a,1];\oplus_a,^{-a},^{\sim a},a,1)$ is a pseudo MV-algebra, where $x^{-a}=x^-\vee a$, $x^{\sim a}=x^\sim\vee a$, and $x\oplus_a y=x\oplus y$, $x,y\in [a,1]$. In addition,
for all $x,y\in [a,1]$, $a=a\odot a\leq x\odot y$, so $x\odot_a y:=(x\odot y)\vee a=x\odot y$.

\begin{prop}\label{8.6.2}
Let $(M;\oplus,^-,^\sim,0,1)$ be a pseudo MV-algebra with a square root $r$ such that the element $a=\bigvee_{n\in \mathbb N}r^n(0)$ exists.
Then the pseudo MV-algebra $([a,1];\oplus_a,^{-a},^{\sim a},a,1)$ is a Boolean algebra.
\end{prop}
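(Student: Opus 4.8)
The plan is to show that the restriction of $r$ to $[a,1]$ is a \emph{weak} square root on the interval pseudo MV-algebra whose value at the neutral element is that neutral element, and then to read off from Proposition~\ref{3.2.0}(5) that every element of $[a,1]$ is Boolean. First I would pin down $a$. By Proposition~\ref{3.2.0}(1) we have $0\le r(0)$, and since $r$ is order-preserving (Proposition~\ref{3.2.0}(2)), the set $S=\{r^n(0)\mid n\in\N\}$ is an increasing chain, so $a=\bigvee S=\bigvee_{n\ge 1}r^n(0)$. Applying Lemma~\ref{8.6.1}(ii) to $S$ gives
\[
r(a)=r\Big(\bigvee_{n} r^n(0)\Big)=\bigvee_{n} r\big(r^n(0)\big)=\bigvee_{n} r^{n+1}(0)=a .
\]
By Proposition~\ref{3.2.0}(6), $r(a)=a$ forces $a\in\B(M)$, which is exactly what makes $([a,1];\oplus_a,^{-a},^{\sim a},a,1)$ a pseudo MV-algebra with $x\odot_a y=x\odot y$ for $x,y\in[a,1]$, as recalled just before the statement. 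Moreover, monotonicity gives $a=r(a)\le r(x)\le r(1)=1$ for every $x\in[a,1]$, so $r$ maps $[a,1]$ into itself.

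Next I would verify that $r|_{[a,1]}$ satisfies (Sq1) and (Sq2) for the interval algebra, i.e.\ that it is a weak square root on $[a,1]$ whose value at $0_{[a,1]}=a$ is $r(a)=a$. For (Sq1), since $\odot_a$ agrees with $\odot$ on $[a,1]$ and $x\ge a$, we get $r(x)\odot_a r(x)=r(x)\odot r(x)=x$. For (Sq2), if $x,y\in[a,1]$ with $y\odot_a y\le x$, then $y\odot y\le x$ in $M$, whence $y\le r(x)$ by (Sq2) for $r$; as the order on $[a,1]$ is inherited from $M$, this reads $y\le_a r(x)$. Thus $r|_{[a,1]}$ is a weak square root on $[a,1]$.

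Finally, since the properties (1)--(8) of Proposition~\ref{3.2.0} hold for every weak square root, I would invoke part~(5) \emph{inside} $[a,1]$, where the square root of the zero element is $r(a)=a=0_{[a,1]}$:
\[
\big(x\wedge x^{-a}\big)\vee\big(x\wedge x^{\sim a}\big)\le a,\qquad x\in[a,1].
\]
Each meet also dominates $a$, so $x\wedge x^{-a}=a=0_{[a,1]}$ for all $x\in[a,1]$. In any pseudo MV-algebra $\Gamma(K,v)$ the identity $x\wedge x^{-}=0$ is equivalent to $x\oplus x=x$, because $x\oplus x=2x\wedge v$ and $2x\wedge v=x$ iff $x\wedge(v-x)=0$; applied in $[a,1]$ this says every $x\in[a,1]$ is a Boolean element of $[a,1]$. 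Hence $([a,1];\oplus_a,^{-a},^{\sim a},a,1)$ is a Boolean algebra.

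The main obstacle I anticipate is not the suprema step, which is immediate from Lemma~\ref{8.6.1}, but the bookkeeping in the middle: confirming that the interval operations $\odot_a$, $\wedge$, and the negations $^{-a},^{\sim a}$ entering (Sq1), (Sq2) and Proposition~\ref{3.2.0}(5) coincide with (or reduce cleanly to) the ambient ones on $[a,1]$, so that the weak-square-root axioms and property~(5) genuinely transfer to the subalgebra. Once $a\in\B(M)$ and $r(a)=a$ are established, each of these identifications is routine, and the Boolean conclusion follows directly.
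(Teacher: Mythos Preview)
Your argument is correct and follows essentially the same route as the paper: establish $r(a)=a$ via Lemma~\ref{8.6.1}, deduce $a\in\B(M)$, observe that $r$ restricts to a (weak) square root on $[a,1]$ with $r(0_{[a,1]})=0_{[a,1]}$, and conclude that $[a,1]$ is Boolean. The only cosmetic difference is that the paper dispatches the final implication by citing \cite[Thm~3.8]{DvZa3} (which says $r(0)=0$ forces $r=\id$ and $M$ Boolean), whereas you unpack that result via Proposition~\ref{3.2.0}(5) and the $\ell$-group identity $(2x)\wedge v=(x\wedge(v-x))+x$; your care in only claiming a \emph{weak} square root for the restriction is also appropriate, since (Sq3) is not needed and would require extra checking.
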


\begin{proof}
Set $a=\bigvee_{n\in \mathbb N}r^n(0)$. Since $r^{n}(0)\leq r^{n+1}(0)$ for all $n\in\mathbb N$, by Lemma \ref{8.6.1}, we have
$r(a)=r(\bigvee_{n\in \mathbb N}r^n(0))=\bigvee_{n\in\mathbb N}r^{n+1}(0)=\bigvee_{n=2}^{\infty}r^{n}(0)=\bigvee_{n\in \mathbb N}r^n(0)=a$.
Thus $a=r(a)\odot r(a)=a\odot a$, that is $a\in\B(M)$, consequently by the remark mentioned  just before this proposition, $([a,1];\oplus_a,^{-a},^{\sim a},a,1)$ is a pseudo MV-algebra.
Clearly, $r([a,1])\s [a,1]$, so by the note just before this proposition, $r$ is a square root on the pseudo MV-algebra $[a,1]$.
Now, $r(a)=a$, and \cite[Thm 3.8]{DvZa3} imply that $r=\id_{[a,1]}$, so that $([a,1];\oplus_a,^{-a},^{\sim a},a,1)$ is a Boolean algebra.
\end{proof}

From Proposition \ref{8.6.2}, we get that if $(M;\oplus,^-,^\sim,0,1)$ is a $\sigma$-complete pseudo MV-algebra with a square root $r$, then $[\bigvee_{n\in \mathbb N}r^n(0),1]\s \B(M)$.

For example, let $M_1=M_2=M_3=[0,1]$ be the MV-algebra of the unit real interval and $M_2=M_4=\{0,1\}$.
Consider the MV-algebra $M=\prod_{i=1}^5M_i$.
Define $r_i:M_i\to M_i$ by $r_i(x)=(x+1)/2$, $i=1,2,3$ and $r_2=r_4=\id_{\{0,1\}}$.
The mapping $r=(r_1,r_2,r_3,r_4,r_5)$ is a square root on $M$. We have $a:=\bigvee_{n\in\mathbb N}r^n(0)=(1,0,1,0,1)$
and in the MV-algebra $M$, we have $[a,1]=\{a,(1,1,1,0,1),(1,0,1,1,1),(1,1,1,1,1)\}$, which is a Boolean algebra.

Moreover, in a pseudo MV-algebra $M$ with a square root $r$, we have $\bigvee_{n\in\mathbb N}r^n(0)=1$ \iff
$|U_r|=1$ where $U_r:=\{x\in M\mid r^n(0)\leq x,~\forall n\in\mathbb N\}$. We note if $I$ is a normal ideal of $M$ with a square root $r$, then $r/I:M/I \to M/I$, defined by $r/I(x/I)=r(x)/I$, $x/I\in M/I$, is a square root on $M/I$, \cite[Cor 3.10]{DvZa3}.

In \cite[Prop 5.5(vi)]{DvZa3}, we proved that if $r$ is a square root on a pseudo MV-algebra $(M;\oplus,^-,^\sim,0,1)$, then $r(0)\oplus x=x\oplus r(0)$ for all $x\in M$.
We show that $r(0)\odot x=x\odot r(0)$ for all $x\in M$.

\begin{prop}\label{ns1}
Let $(M;\oplus,^-,^\sim,0,1)$ be a pseudo MV-algebra with a square root $r$. Then $r(0)\odot x=x\odot r(0)$ for all $x\in M$.
\end{prop}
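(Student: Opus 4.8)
The plan is to pass to the $\ell$-group representation $M=\Gamma(G,u)$ and to reduce the claim to a single commutativity statement that can be read off in $G$. First I would apply the negation ${}^-$ to the desired identity: by the De Morgan law $(a\odot b)^-=a^-\oplus b^-$, the equality $r(0)\odot x=x\odot r(0)$ is equivalent to $r(0)^-\oplus x^-=x^-\oplus r(0)^-$, and since ${}^-$ is a bijection of $M$ this says precisely that $r(0)^-$ commutes with every element under $\oplus$. We are given (just before the statement) that $r(0)$ itself commutes with every element under $\oplus$, so the entire content of the proposition is the passage from $r(0)$ to $r(0)^-$.

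The next step is to locate $r(0)$ inside $G$. Evaluating the given relation $r(0)\oplus y=y\oplus r(0)$ at $y=r(0)^-$ and at $y=r(0)^\sim$, and using $r(0)\oplus r(0)^-=1=r(0)\oplus r(0)^\sim$, one obtains $r(0)+u\ge u+r(0)$ from the first and the reverse inequality from the second, whence $r(0)+u=u+r(0)$; equivalently $r(0)^-=r(0)^\sim$. With $r(0)$ now commuting with $u$, the truncation by $u$ becomes harmless on the lower interval: for $y\le r(0)^-$ both $r(0)+y$ and $y+r(0)$ lie below $u$, so $r(0)\oplus y=y\oplus r(0)$ upgrades to the genuine group equality $r(0)+y=y+r(0)$. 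Thus $r(0)$ commutes in $G$ with every element of $[0,r(0)^-]$. Since $r(0)\odot r(0)=0$ forces $r(0)+r(0)\le u$, we get $u=r(0)^-+r(0)\le r(0)^-+r(0)^-$, so $r(0)^-$ is a strong unit; the Riesz decomposition property \cite[Thm 1.3.11]{Dar} then writes every $g\in G^+$ as a finite sum of elements of $[0,r(0)^-]$, and I would conclude that $r(0)$ is central in $G$.

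Finally, with $r(0)$ central and commuting with $u$, a direct computation in $\Gamma(G,u)$ gives $r(0)\odot x=((r(0)+x)-u)\vee 0$ and $x\odot r(0)=(-u+(r(0)+x))\vee 0$. These two expressions differ only by conjugation by $u$; indeed one checks the identity $(r(0)\odot x)+u=u+(x\odot r(0))$, with both products lying in $[0,r(0)]$. The remaining step, which I expect to be the genuine obstacle, is to show that $u$ commutes with the element $r(0)\odot x$, equivalently that $u$ commutes with every element of $[0,r(0)]$; granting this, the two displayed expressions coincide and the proof is finished. This is exactly where the full strength of $r$ (and not merely the additive commutativity of $r(0)$) has to be invoked, for additive centrality of $r(0)$ together with $2r(0)\le u$ alone does not force the conjugation to be trivial. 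I would therefore devote the bulk of the argument to establishing this $u$-invariance, either by showing directly that the elements $r(0)\odot x$ are fixed under conjugation by $u$, or by using the square root $r$ to place $[0,r(0)]$ inside the part of $G$ centralized by $u$.
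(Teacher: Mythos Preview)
Your reduction is sound up to and including the centrality of $r(0)$: the De~Morgan step, the derivation of $r(0)^-=r(0)^\sim$, the upgrade $r(0)+y=y+r(0)$ for $y\le r(0)^-$, and the Riesz-decomposition argument showing $r(0)\in\C(G)$ are all correct. The problem is that you then stop at precisely the point that carries the content. Centrality of $r(0)$ gives
\[
r(0)\odot x=(r(0)-u+x)\vee 0=(-u+x+r(0))\vee 0,\qquad
x\odot r(0)=(x-u+r(0))\vee 0,
\]
and equality of these two needs $-u+x=x-u$, i.e.\ $u$ commutes with $x$ (or, in your formulation, $u$ commutes with every element of $[0,r(0)]$). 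You explicitly flag this as the remaining obstacle but offer only a program (``show the conjugation is trivial'' or ``place $[0,r(0)]$ in the centralizer of $u$'') with no mechanism. Nothing in your argument so far forces $u$ to centralize $[0,r(0)]$; the hypothesis $2r(0)\le u$ together with $r(0)\in\C(G)$ is simply too weak in a non-symmetric pseudo MV-algebra, and you say as much yourself. So the proof is genuinely incomplete.

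The paper closes this gap not by a centralizer argument but by invoking the structure theorem \cite[Thm~4.3]{DvZa3}: every pseudo MV-algebra with square root is Boolean, or strict, or a product of the two. In the strict case one has $r(0)=r(0)^-$, and then the identity is a one-line computation using only the $\oplus$-commutativity of $r(0)$:
\[
r(0)\odot x=(r(0)^-\oplus x^-)^\sim=(r(0)\oplus x^-)^\sim=(x^-\oplus r(0))^\sim=(x^-\oplus r(0)^-)^\sim=x\odot r(0).
\]
The mixed case follows by working componentwise. Note that in the strict case your own approach also succeeds, since then $u=2r(0)$ is central; the decomposition theorem is exactly what is needed to reduce the general case to this one, and it is the missing ingredient in your outline.
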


\begin{proof}
If $M$ is a Boolean algebra, then the proof is evident.
Choose $x\in M$.

(i) If $M$ is strict, then by \cite[Prop 3.5(vi)]{DvZa3},
$x\odot r(0)=(r(0)^-\oplus x^-)^\sim=(r(0)\oplus x^-)^\sim=(x^-\oplus r(0))^\sim=(x^-\oplus r(0)^-)^\sim=r(0)\odot x$.

(ii) If $M$ is neither strict nor a Boolean algebra, then $v=r(0)^-\odot r(0)^-\neq 0,1$. According to \cite[Thm 4.3]{DvZa3},
$M\cong [0,v]\times [0,v^-]$, where $[0,v]$ is a Boolean algebra and $[0,v^-]$ is a strict pseudo MV-algebra.
Let $r_1:[0,v]\to [0,v]$ and $r_2:[0,v^-]\to [0,v^-]$ be square roots, where $r_1$ is the identity map on $[0,v]$ and $r_2$ is strict.
Define $s:M\to M$ by $s(x)=r_1(x\wedge v)\vee r_2(x\wedge v^-)$.
Then $s(x)\odot s(x)=(r_1(x\wedge v)\vee r_2(x\wedge v^-))\odot (r_1(x\wedge v)\vee r_2(x\wedge v^-))=
(x\wedge v)\vee \big(r_1(x\wedge v)\odot r_2(x\wedge v^-)\big)\vee (x\wedge v^-)=(x\wedge v)\vee 0\vee (x\wedge v^-)=x$.

In addition, if $y\in M$ such that
$y\odot y\leq x$, then
$((y\wedge v)^2\vee (y\wedge v^-)^2)=((y\wedge v)\vee (y\wedge v^-))\odot ((y\wedge v)\vee (y\wedge v^-))\leq (x\wedge v)\vee (x\vee v^-)$, it follows that
$(y\wedge v)^2\leq (x\wedge v)$ and $(y\wedge v^-)^2\leq (x\wedge v^-)$, since $v\wedge v^-=0$.
So, $y\wedge v\leq r_1(x\wedge v)$ and $z\wedge v^-\leq r_2(x\wedge v^-)$, consequently, $y=(y\wedge v)\vee (y\wedge v^-)\leq r_1(x\wedge v)\vee r_2(x\wedge v^-)=s(x)$.
Hence, $s$ is a square root on $M$, so $s=r$.
We have $x\odot r(0)=x\odot s(0)=((x\wedge v)\vee (x\wedge v^-))\odot (r_1(0)\vee r_2(0))=(x\wedge v^-)\odot r_2(0)$.
By (i),  $(x\wedge v^-)\odot r_2(0)=r_2(0)\odot(x\wedge v^-)$.
In a similar way, $r_2(0)\odot(x\wedge v^-)=r(0)\odot x$. Therefore,
$x\odot r(0)=(x\wedge v^-)\odot r_2(0)=r_2(0)\odot(x\wedge v^-)=r(0)\odot x$.
\end{proof}

\begin{cor}
Let $x$ be an element of a pseudo MV-algebra $M$ with a square root $r$. The following statements hold:
\begin{itemize}[nolistsep]
\item[{\rm (i)}] $r(x^n)=r(x)^n\vee r(0)$ for all $n\in\mathbb N$.
\item[{\rm (ii)}] $\bigvee_{n\in\mathbb N}r(x^n)$ exists \iff $\bigvee_{n\in\mathbb N}r(x)^n$ exists.
\end{itemize}
\end{cor}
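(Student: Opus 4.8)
The plan is to establish the identity in (i) first and then read off (ii) from it together with the elementary monotonicity of $\odot$ and of $r$. Throughout I would use freely the basic facts $a\odot b\le a\wedge b$ and the distributivity of $\odot$ over $\vee$, as well as Proposition \ref{3.2.0}(2), which makes $r$ order-preserving, so that $r(0)\le r(z)$ for every $z\in M$. Here $x^n$ and $r(x)^n$ denote the $n$-fold $\odot$-powers.

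For (i) I would prove the two inequalities separately. The key observation for the lower bound $r(x)^n\vee r(0)\le r(x^n)$ is that, by associativity of $\odot$ and (Sq1), the $2n$-fold power $r(x)^{2n}$ splits into $n$ consecutive pairs $r(x)\odot r(x)=x$, so that $r(x)^n\odot r(x)^n=r(x)^{2n}=x^n$. Applying (Sq2) to $y=r(x)^n$ and to $x^n$ then gives $r(x)^n\le r(x^n)$ at once; combined with $r(0)\le r(x^n)$ this yields $r(x)^n\vee r(0)\le r(x^n)$, with no induction needed. For the reverse inequality $r(x^n)\le r(x)^n\vee r(0)$ I would induct on $n$, the case $n=1$ being $r(x)\le r(x)\vee r(0)=r(x)$. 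For the step, writing $x^{n+1}=x^n\odot x$ and invoking the general inequality of Proposition \ref{3.2.0}(10),
\begin{eqnarray*}
r(x^{n+1})=r(x^n\odot x)\le \big(r(x^n)\odot r(x)\big)\vee r(0)\le \big((r(x)^n\vee r(0))\odot r(x)\big)\vee r(0),
\end{eqnarray*}
where the last step uses the induction hypothesis and monotonicity of $\odot$. Distributing $\odot$ over $\vee$ rewrites the right-hand side as $r(x)^{n+1}\vee(r(0)\odot r(x))\vee r(0)=r(x)^{n+1}\vee r(0)$, since $r(0)\odot r(x)\le r(0)$. The two inequalities give $r(x^n)=r(x)^n\vee r(0)$.

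Part (ii) is then immediate. Because $a\odot b\le a$, the sequence $\big(r(x)^n\big)_n$ is non-increasing, so its join is its first term $r(x)$ and always exists; likewise $(x^n)_n$ is non-increasing, and since $r$ is order-preserving so is $\big(r(x^n)\big)_n$, whence $\bigvee_n r(x^n)=r(x^1)=r(x)$ exists as well. Thus both joins exist (indeed both equal $r(x)$), which is the asserted equivalence. Alternatively, substituting (i) into $\bigvee_n r(x^n)=\bigvee_n\big(r(x)^n\vee r(0)\big)=\big(\bigvee_n r(x)^n\big)\vee r(0)$ exhibits the relationship between the two joins directly.

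As for the difficulty: the inequality supplied by Proposition \ref{3.2.0}(10) only delivers the $\le$ half of (i), so the real point is to secure the matching lower bound. The one genuinely clever step is recognizing that $r(x)^n$ squares, under $\odot$, exactly to $x^n$, which lets (Sq2) hand over $r(x)^n\le r(x^n)$ with no further work; once this is in place, the remaining parts are a routine induction and an elementary chain argument.
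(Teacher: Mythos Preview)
Your proof is correct and, for part (i), takes a genuinely cleaner route to the lower bound than the paper does. The paper establishes $r(x)^n\vee r(0)\le r(x^n)$ inductively by showing $(r(x)\odot r(x^n))\odot(r(x)\odot r(x^n))=x^{n+1}$, a computation that hinges on Proposition~\ref{ns1} (the commutation $r(0)\odot y=y\odot r(0)$) to reorder the middle factors. You bypass this entirely: from associativity of $\odot$ alone, $r(x)^n\odot r(x)^n=r(x)^{2n}=\big(r(x)\odot r(x)\big)^n=x^n$, and (Sq2) hands over $r(x)^n\le r(x^n)$ directly, with no induction and no appeal to Proposition~\ref{ns1}. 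The upper-bound induction via Proposition~\ref{3.2.0}(10) is then essentially the same in both proofs. What the paper's extra work buys is the intermediate equality $r(x^{n+1})=(r(x)\odot r(x^n))\vee r(0)$, but that is not needed for the corollary as stated. For (ii), your observation that both sequences are $\odot$-decreasing, so both joins equal $r(x)$ and always exist, is more informative than the paper's one-line ``follows from part (i)''.
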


\begin{proof}
(i) For $n=1$, the proof is clear, since $r(0)\leq r(x)$. For $n=2$, the statement was proved in \cite[Prop 3.3(10)]{DvZa3}.
Let for $2\leq n$ we have $r(x^n)=r(x)^n\vee r(0)$.
By \cite[Prop 3.3(10)]{DvZa3}, $r(x^{n+1})\leq (r(x)\odot r(x^{n}))\vee r(0)$.
Then,
\begin{align*}
(r(x)\odot r(x^{n}))\odot (r(x)\odot r(x^{n}))&=r(x)\odot \big((r(x)^n\vee r(0))\odot r(x)\big)\odot r(x^n)\\
&= r(x)\odot \big((r(x)^n\odot r(x))\vee (r(0)\odot r(x))\big)\odot r(x^n)\\
&= r(x)\odot \big((r(x)\odot r(x)^n)\vee (r(x)\odot r(0))\big)\odot r(x^n)\mbox{, by Proposition \ref{ns1}}\\
&= r(x)\odot r(x)\odot (r(x)^n\vee r(0))\odot r(x^n)\\
&= x\odot r(x^n)\odot r(x^n)=x^{n+1},
\end{align*}
so, by (Sq2), $r(x)\odot r(x^{n})\leq r(x^{n+1})$. Clearly, $r(0)\leq r(x^{n+1})$, whence $(r(x)\odot r(x^{n}))\vee r(0)\leq r(x^{n+1})$.
Therefore,
\begin{eqnarray*}
r(x^{n+1})&=& (r(x)\odot r(x^{n}))\vee r(0)=\Big(r(x)\odot \big( r(x)^n\vee r(0) \big) \Big)\vee r(0)\\
&=& r(x)^{n+1}\vee (r(x)\odot r(0)) \vee r(0)\\&=& r(x)^{n+1}\vee r(0).
\end{eqnarray*}
Therefore, $r(0)\vee r(x)^{n}=r(x^{n})$ for all $n\in\mathbb N$

(ii) It follows from part (i).
\end{proof}

\begin{thm}\label{semisimple}
Let $M=\Gamma(G,u)$ be a semisimple MV-algebra with a square root $r$.
Then $r$ is strict if and only if $\bigvee_{n\in\mathbb N}r^n(0)=u$.
\end{thm}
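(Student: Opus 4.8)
The plan is to prove the two implications separately, exploiting the explicit form of a strict square root together with the archimedean character of semisimple MV-algebras. Throughout I would use three facts. First, the sequence $(r^n(0))_{n\in\mathbb N}$ is nondecreasing, since $x\le r(x)$ for every $x$ by Proposition \ref{3.2.0}(1) (apply this to $x=r^n(0)$). Second, a semisimple MV-algebra is representable and its representing unital $\ell$-group $(G,u)$ is archimedean. Third, by \cite[Thm 5.2]{DvZa3}, a strict square root on such an $M=\Gamma(G,u)$ is given explicitly by $r(x)=(x+u)/2$.

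For the forward implication, suppose $r$ is strict. Then $r(0)=u/2$, and iterating $r(x)=(x+u)/2$ an easy induction yields $r^n(0)=(1-2^{-n})u$ for every $n\in\mathbb N$. Since $(1-2^{-n})u\le u$, the element $u$ is an upper bound of $\{r^n(0)\mid n\in\mathbb N\}$, and I would show it is the least one. Taking any upper bound $b$ (so $b\le u$), the inequality $(1-2^{-n})u\le b$ gives $u-b\le 2^{-n}u$, hence $2^n(u-b)\le u$; choosing, for a fixed $m$, some $n$ with $2^n\ge m$ yields $m(u-b)\le 2^n(u-b)\le u$ for all $m$. The archimedean property of $G$ then forces $u-b=0$, i.e. $b=u$, so $\bigvee_{n\in\mathbb N}r^n(0)=u$.

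For the converse I would argue through the canonical decomposition. Setting $v:=r(0)^-\odot r(0)^-\in\B(M)$, one has $M\cong [0,v]\times[0,v^-]$ with $[0,v]$ a Boolean algebra and $[0,v^-]$ a strict MV-algebra (\cite[Thm 4.3]{DvZa3}; see also the proof of Proposition \ref{8.3}), and $r$ is strict iff $v=0$. Under this isomorphism the square root splits componentwise, and on the Boolean factor $[0,v]$ it is the identity by \cite[Thm 3.8]{DvZa3}, so the $[0,v]$-component of $r^n(0)$ equals $0$ for every $n$. Since suprema in a direct product are computed componentwise, the hypothesis $\bigvee_n r^n(0)=u$ projects onto the $[0,v]$-factor to give $\bigvee_n 0=v$, that is $v=0$; hence $r$ is strict. (Alternatively, one may run the forward computation inside the semisimple factor $[0,v^-]$ to get $\bigvee_n r^n(0)=v^-$ directly, and then $v^-=u$ iff $v=0$.)

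The main obstacle is the forward passage to the limit, namely verifying that the increasing sequence $(1-2^{-n})u$ really has supremum $u$; this is precisely where semisimplicity is indispensable, since in a non-archimedean strict MV-algebra this supremum can fail to be $u$ or may not exist at all. Once the formula $r(x)=(x+u)/2$ and the product decomposition are in hand, the remaining steps are structural and routine.
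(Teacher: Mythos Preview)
Your proof is correct. The converse is essentially the paper's argument: the paper also passes through the decomposition $M\cong[0,v]\times[0,v']$ and observes that every $r^n(0)$ lies below $v'$, forcing $u\le v'$ and hence $v=0$; your phrasing via componentwise suprema is the same idea. For the forward direction, however, you take a genuinely different route. The paper argues through the subdirect embedding $f:M\to\prod_{I\in Y}M/I$ over maximal ideals $I$, checks that each quotient $M/I$ is a subalgebra of $[0,1]$ with a strict induced square root (first ruling out $|M/I|=2$), computes the supremum of $r_I^n(0/I)$ inside $[0,1]$, and then lifts back to $M$ using $\bigcap Y=\{0\}$. You instead stay inside $(G,u)$, invoke the equivalence ``$M$ semisimple $\Leftrightarrow$ $G$ archimedean'' once, and finish with the elementary archimedean estimate $m(u-b)\le u$ for all $m$. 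Your approach is shorter and avoids the quotient machinery, at the cost of importing the archimedean characterization of semisimplicity as a black box; the paper's approach makes the role of semisimplicity more explicit (it literally uses the maximal-ideal representation) and stays purely within MV-algebraic language. Both are perfectly valid; yours is arguably the cleaner computation once the background fact is granted.
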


\begin{proof}
The proof is clear for $|M|=1$, so we assume that $M$ is non-degenerate. Suppose that $r$ is a strict square root on $M$ and $Y=\text{MaxI}(M)$ is the set of all maximal ideals of $M$.
Then $r(0)=u-r(0)$, and so $r(0)=u/2$.
Since $M$ is semisimple, $\bigcap Y=\{0\}$. Consider the natural embedding $f:M\to \prod_{I\in Y}M/I$.
For each $I\in Y$, the MV-algebra $M/I$ is isomorphic to a subalgebra of the unit real interval $[0,1]$. In addition, $|M/I|>2$ and $r_I:M/I\to M/I$ defined by $r_I(x/I)=r(x)/I$ is a strict square root on the linearly ordered MV-algebra $M/I$. By \cite[Thm 5.1]{DvZa3},
for any $n\in\mathbb N$, $r_I^n(0/I)=(2^{n}-1)u/2^n$. Since $M/I$ is isomorphic to a subalgebra of $[0,1]$, we have $\bigvee_{n\in\mathbb N}r_I^n(0/I)=\bigvee_{n\in\mathbb N}(2^{n}-1)u/2^n=1$.
We show that $\bigvee_{n\in\mathbb N}r^n(0)=u$.
Let $a\in M$ be an upper bound for the set $\{r^n(0)\mid n\in\mathbb N\}$. We have
\begin{eqnarray*}
(a/I)_{I\in Y}=f(a)&\ge& f(r^n(0))=f((2^n-1)u/2^n)=(((2^n-1)u/I)/2^n)_{I\in Y}.
\end{eqnarray*}
Hence, $a/I=u/I$ for all $I\in Y$ and so $a=u$.

Conversely, let $\bigvee_{n\in\mathbb N}r^n(0)=u$. We claim that $r$ is strict. If $B$ is a Boolean algebra, by \cite[Thm 3.8]{DvZa3}, $r(0)=0$, so that
$u=\bigvee_{n\in\mathbb N}r^n(0)=0$, which is absurd. Otherwise, by \cite[Thm 2.21]{Hol},
$M\cong [0,v]\times [0,v']$, where $v=r(0)'\odot r(0)'$, $[0,v]$ is a Boolean algebra and $[0,v']$ is a strict MV-algebra.
According \cite[Thm 5.3]{DvZa3}, $r(x)=(x\wedge v)\vee ((x\wedge v')+v')/2$ for all $x\in M$ and so
$r(0)\leq v'$. In a similar way, $r^2(0)=r(r(0))=(r(0)\wedge v)\vee (r(0)+v')/2\leq v'$ and $r^{n}(0)\leq v'$ for all $n\in\mathbb N$.
From $\bigvee_{n\in\mathbb N}r^n(0)=u$ we get $u\leq v'$ which means $u=v'$. Therefore, $M$ is strict.
\end{proof}

\begin{prop}\label{H-root0}
Let $r$ be a square root on a representable pseudo MV-algebra $(M;\oplus,^-,^\sim,0,1)$. Then
\begin{itemize}%[nolistsep]
\item[{\rm (i)}] $I:=\{x\in M\mid x\le r^n(0)^-,~\forall n\in\mathbb N\}$ is an ideal of $M$.
\item[{\rm (ii)}] If $M$ is linearly ordered, then $I$ is a normal and maximal ideal.
\item[{\rm (iii)}] If $I$ is a normal ideal of $M$, then $|U_{r/I}|=1$.
\end{itemize}
\end{prop}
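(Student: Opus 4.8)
The plan is to isolate first the one algebraic identity that drives all three parts, namely
\begin{equation}\label{eq:key-root}
r^{n+1}(0)^-\oplus r^{n+1}(0)^- = r^n(0)^-,\qquad n\in\mathbb{N}.
\end{equation}
To obtain \eqref{eq:key-root} I would set $a:=r^n(0)$, so $r^{n+1}(0)=r(a)$ and, by (Sq1), $r(a)\odot r(a)=a$; since $r(a)\odot r(a)=(r(a)^-\oplus r(a)^-)^\sim$ and $^-,{}^\sim$ are mutually inverse bijections (A8), applying $^-$ to $(r(a)^-\oplus r(a)^-)^\sim=a$ gives $r(a)^-\oplus r(a)^-=a^-$, which is \eqref{eq:key-root}. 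Iterating \eqref{eq:key-root} and using associativity of $\oplus$, I also record $(r^m(0)^-)^{\oplus 2^m}=r^0(0)^-=0^-=1$ for every $m$, which I will need for maximality. Part (i) is then immediate: downward closure is built into the definition of $I$, and for closure under $\oplus$ I take $x,y\in I$ and a fixed $n$; since $x,y\le r^{n+1}(0)^-$ and $\oplus$ is monotone, \eqref{eq:key-root} gives $x\oplus y\le r^{n+1}(0)^-\oplus r^{n+1}(0)^-=r^n(0)^-$, and as $n$ is arbitrary, $x\oplus y\in I$.

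For (ii) I would first invoke the dichotomy that a linearly ordered pseudo MV-algebra with a square root is either the two-element Boolean algebra or strict (this is \cite[Cor 4.5, Thm 5.7(3)]{DvZa3}, already used in the proof of Corollary \ref{18-1}); the substantive case is the strict one, since if $M=\{0,1\}$ then $r=\id$ forces $I=M$. Assume then $M$ is strict. Primeness is free: in a chain $x\wedge y$ is the smaller of $x,y$, so $x\wedge y\in I$ forces $x\in I$ or $y\in I$. For normality I would argue that each $r^n(0)^-$ is central: by Proposition \ref{8.4}, $r(0)\in\C(G)$, hence $u\in\C(G)$, and because a linearly ordered $G$ enjoys unique extraction of roots, the equation $2^n\cdot\big(g+r^n(0)^--g\big)=g+u-g=u=2^n\cdot r^n(0)^-$ forces $g+r^n(0)^--g=r^n(0)^-$ for all $g$, i.e. $r^n(0)^-\in\C(G)$. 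Centrality makes the convex $\ell$-subgroup $\hat I=\{g\in G:|g|\le r^n(0)^-\ \forall n\}$ closed under conjugation, hence an $\ell$-ideal, so $I$ is normal. Finally $I$ is proper since $u\not\le r(0)^-<u$; and if an ideal $J$ strictly contains $I$, I pick $x\in J\setminus I$, so $x\not\le r^m(0)^-$ for some $m$, whence $r^m(0)^-\le x$ by linearity and $1=(r^m(0)^-)^{\oplus 2^m}\le 2^m.x\in J$, forcing $J=M$.

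For (iii) the cleanest route is through the remark preceding Proposition \ref{ns1}: in any pseudo MV-algebra with a square root $s$, the condition $|U_s|=1$ is equivalent to $\bigvee_n s^n(0)$ being the top. Applying this to $s=r/I$ on $M/I$, it suffices to show that $1/I$ is the only upper bound of $\{r^n(0)/I\}$. So let $x/I$ satisfy $r^n(0)/I\le x/I$ for all $n$; writing $y:=x^-$ and using that complementation reverses the order of $M/I$, this reads $y/I\le r^n(0)^-/I$ for all $n$, and I must prove $y\in I$ (equivalently $x/I=1/I$). From $y/I\le r^n(0)^-/I$ I get $\iota_n:=y\odot (r^n(0)^-)^-\in I$, and (A6) yields $y\le y\vee r^n(0)^-=\iota_n\oplus r^n(0)^-$. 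Since $\iota_n\in I$ we have $\iota_n\le r^n(0)^-$, so monotonicity of $\oplus$ together with \eqref{eq:key-root} gives $y\le r^n(0)^-\oplus r^n(0)^-=r^{n-1}(0)^-$. Letting $n$ range over $\mathbb{N}$ forces $y\le r^m(0)^-$ for every $m$, i.e. $y\in I$, as required.

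I expect the main obstacle to be the normality claim in (ii): once \eqref{eq:key-root} and the centrality of the $r^n(0)^-$ are in place, parts (i) and (iii) fall out by monotone bookkeeping, but the conjugation-invariance of $\hat I$ genuinely needs the root-uniqueness of the linearly ordered group and Proposition \ref{8.4}. I would also flag the two-element case as the one point where the literal statement of (ii) degenerates ($I=M$ is not proper), so the maximality assertion should be read under the tacit assumption that $M$ is non-Boolean (equivalently strict).
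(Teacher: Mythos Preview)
Your argument is correct. Parts (i) and (iii) follow essentially the same route as the paper: both isolate the identity $r^{n+1}(0)^-\oplus r^{n+1}(0)^-=r^n(0)^-$ for closure under $\oplus$, and both finish (iii) by bounding $x^-$ above by $r^{n-1}(0)^-$ via $r^n(0)^-\vee x^-=r^n(0)^-\oplus(r^n(0)\odot x^-)$ and the key identity. Your write-up of (iii) is in fact tighter than the paper's, which asserts that $r^n(0)^-\oplus(r^n(0)\odot x^-)\in I$ when what is really used is only the bound $\le r^{n-1}(0)^-$.

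The one genuine difference is the normality step in (ii). The paper argues by contradiction at the level of conjugates: assuming $g+x-g\notin I$ for some $x\in I$, it picks $n$ with $r^n(0)^-<g+x-g$, multiplies by $2^n$ to reach $u<g+2^nx-g$, and then uses $u\in\C(G)$ to contradict $2^nx\le u$. You instead prove outright that every $r^n(0)^-$ is central, via the equation $2^n(g+r^n(0)^--g)=g+u-g=u=2^n r^n(0)^-$ and unique extraction of roots in a chain; normality of $\hat I$ is then immediate. Your approach is more conceptual and yields the stronger conclusion $r^n(0)^-\in\C(G)$, at the cost of explicitly invoking root-uniqueness, whereas the paper's argument is purely order-theoretic once $u\in\C(G)$ is known. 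One small point to make explicit in your version: the passage from the MV-identity $(r^m(0)^-)^{\oplus 2^m}=1$ to the group identity $2^m r^m(0)^-=u$ uses that in a chain $a\oplus a=b<u$ forces $a+a=b$; this is routine but worth a line. Your flag on the two-element case (where $I=M$ is not proper) is well taken; the paper simply records normality there and leaves maximality tacitly to the non-Boolean case.
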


\begin{proof}
(i) Since $r(0)\leq r^2(0)\leq\cdots\leq r^n(0)\leq \cdots$, we have
$r(0)^-\geq r^2(0)^-\geq\cdots\geq r^n(0)^-\geq \cdots$.
Clearly, $x\leq y\in I$ implies that $x\in I$.
Let $x,y\in I$. Then $x,y\leq r^n(0)^-$ for all $n\in\mathbb N$. Choose $m\in\mathbb N$.
$x\oplus y\leq r^{m+1}(0)^-\oplus r^{m+1}(0)^-=(r^{m+1}(0)\odot r^{m+1}(0))^-=r^{m}(0)^-$. Thus, $x\oplus y\in I$. Therefore,
$I$ is an ideal of $M$.

(ii) If $|M|=2$, then $M=\{0,1\}$, $r=\id_M$, and clearly, $I$ is normal.
Let $3\leq |M|$ and $M=\Gamma(G,u)$, where $(G,u)$ is a unital $\ell$-group.
By \cite[Thm 4.3]{DvZa3}, $r$ is strict. \cite[Thm 5.6]{DvZa3} implies that
$u/2$ exists, $u/2,u\in \C(G)$, and $M$ is symmetric. Moreover, $r(x)=(x+u)/2$ for each $x \in M$ (by \cite[Thm 4.3]{DvZa3}).
Since for each $x\in M$, $r(x^-)^-=u-(u+(u-x))/2=x/2$, then $r(0)^-=u/2$,
$r^2(0)^-=r(r(0)^-)^-=u/4,\ldots, r^{n+1}(0)^-=r((u/2^n)^-)^-=u/2^{n+1}$.
Set $H:=I\cup -I$. Then $H$ is an $\ell$-ideal (a normal convex $\ell$-subgroup) of $G$.

(1)  If $0\geq x\geq y\in -I$, then by (i),
$0\leq -x\leq -y\in I$ and so $x\in -I$. Now, since $G$ is linearly ordered, by (i) $H=I\cup -I$ is convex.

(2) Let $x,y\in I$. We can assume $x\leq y$. Then $x+y\leq y+y$.
If $u\leq y+y$, then $r(0)^- =u/2\leq y$, which contradicts with $y\in I$.
Thus $y+y<u$ and so $y+y=y\oplus y\in I\s H$. If $x,y\in -I$, then
$-x,-y\in I$, so $-(y+x)=-x+-y\in I$ and consequently $y+x\in -I\s H$.

(3) Let $x\in I$ and $y\in -I$. If $0\leq x+y$, then $0\leq x+y\leq x$ and (1) imply that $x+y\in I\s H$. Otherwise,
$y+x\leq 0$, then similarly by (1), $y\leq y+x\leq 0$ implies that $y+x\in -I\s H$.

(4) Let $g\in G$ and $x\in I$. We claim that $g+x-g\in I$.
If $g+x-g>u$, then $x\geq -g+u+g=u-g+g=u$ implies that $x\notin G^+\setminus I$.
Clearly, $0\leq g+x-g$ (since $g+x-g<0$ implies that $x\leq 0$ and so $x=0$). Consequently, $g+x-g=0$.
Thus $0\leq g+x-g\leq u$. If $g+x-g\notin H$, there exists $n\in\mathbb N$ such that $r^{n}(0)^-\leq g+x-g$.
It follows that $g+2^nx-g=2^n(g+x-g)\geq 2^nr^n(0)^-=2^n(u/2^n)=u$ and so
$2^nx\geq -g+u+g=u$, which contradicts with $x\in I$. Similarly, we can show that for each $g\in G$ and $x\in -I$,
$g+x-g\in -I$. Hence, $H$ is a normal ideal (convex $\ell$-subgroup) of $G$, and so is its corresponding ideal
in the pseudo MV-algebra $M$, that is, $I=H\cap M$ is a normal ideal of $M$.

On the other hand, if $J$ is an ideal of $M$ properly containing $I$, then for each $x\in J\setminus I$, there exists $n\in\mathbb N$ such that
$r^n(0)^-< x$. It follows from the first step of this part that $u/2^n\leq x$ and so $u\in J$, which means $J=M$. Therefore,
$I$ is a maximal ideal of $M$.

(iii) Let $I$ be normal. Consider the square root $r/I:M/I\to M/I$ defined by $r/I(x/I)=r(x)/I$. Choose $x/I\in U_{r/I}$.
Then $r_{r/I}^n(0/I)\leq x/I$ for all $n\in\mathbb N$ and so $r^n(0)/I\leq x/I$ for all $n\in\mathbb N$. It follows that
$r^n(0)\odot x^-\in I$. For each $n\in \mathbb N$ we have
$r^n(0)^-\vee x^-=r^n(0)^-\oplus(r^n(0)\odot x^-)\in I$, which means $x^-\in I$. Hence $x/I=1/I$, that is $|U_{r/I}|=1$.
\end{proof}

From the note right before Proposition \ref{H-root0}, it follows that if $M$ is a linearly pseudo MV-algebra with a square root $r$, then $\bigvee\{r^n(0)/I\mid n\in\mathbb N\}=1/I$. In addition, Proposition \ref{8.6.2} implies that if $\bigvee\{r^n(0)\mid n\in \mathbb N\}$ exists, then it is equal to $1$.

In the sequel, we characterize strongly $(H,1)$-perfect MV-algebras with square roots.

\begin{prop}\label{H-perfect1}
Let $M=\Gamma(H\lex G,(1,0))$ be a strongly $(H, 1)$-perfect pseudo MV-algebra with a square root $r$.
Then $M$ satisfies precisely one of the following statements:
\begin{itemize}[nolistsep]
\item[{\rm (i)}]  If $H\cong \mathbb Z$, then $|G|=1$ and $r(0)=0$. The converse holds, too.
\item[{\rm (ii)}] If $r$ is strict, then $H$ is two-divisible. The converse holds, too.
\end{itemize}
\end{prop}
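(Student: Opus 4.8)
The plan is to compute everything inside the representing unital $\ell$-group $K:=H\lex G$ with strong unit $v:=(1,0)$, so that $M=\Gamma(K,v)$ consists of the pairs $(h,g)$ with $(0,0)\le(h,g)\le(1,0)$: here $h=0$ forces $g\ge0$, $h=1$ forces $g\le0$, and $0<h<1$ permits any $g\in G$. Since in $\Gamma(K,v)$ one has $x\odot y=(x+y-v)\vee 0$, the axioms (Sq1) and (Sq2) become elementary statements about the lexicographic order. The single fact I would isolate first is that, applying (Sq1) and (Sq2) with $x=0$, the element $r(0)$ is exactly
\[
r(0)=\max\{y\in M\colon y\odot y\le 0\}=\max\{y\in M\colon 2y\le v\};
\]
both halves of the proposition flow from evaluating this maximum.

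For part (i) I would assume $H\cong\mathbb{Z}$. Then $M$ contains no pair with first coordinate strictly between $0$ and $1$, and solving $2y\le v$ in the lexicographic order shows $\{y\colon y\odot y\le0\}=\{(0,g)\colon g\in G^+\}$. Thus $r(0)=(0,g_0)$ with $g_0$ a greatest element of $G^+$; but $G^+$ is unbounded whenever $G\ne\{0\}$ (if $g>0$ then $0<g<2g<\cdots$), so $|G|=1$ and $r(0)=(0,0)=0$. For the converse, $|G|=1$ makes $M=\Gamma(H,1)$ a chain, and $r(0)=0$ gives $r=\mathrm{id}_M$ by \cite[Thm 3.8]{DvZa3}; hence $M$ is a Boolean chain, i.e. $M=\{0,1\}$, and a subgroup of $\mathbb{R}$ containing $1$ but meeting $(0,1)$ emptily is $\mathbb{Z}$.

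For part (ii), suppose $r$ is strict. Then $r(0)=r(0)^-=v-r(0)$, so $r(0)=v/2=(1/2,0)$ and in particular $1/2\in H$. For any $h\in H\cap(0,1)$ the pair $(h,0)$ lies in $M$; writing $r((h,0))=(p,q)$, the identity $(p,q)\odot(p,q)=(2p-1,2q)\vee0=(h,0)$ with $(h,0)>0$ forces the join to be attained by $(2p-1,2q)$, giving $2p-1=h$ and $2q=0$. As $\ell$-groups are torsion-free, $q=0$, so $p=(h+1)/2\in H$, and subtracting $1/2\in H$ yields $h/2\in H$. Reducing a general $h\in H$ modulo $\mathbb{Z}\subseteq H$ and using $\tfrac12\mathbb{Z}\subseteq H$ then gives $h/2\in H$ for every $h$, i.e. $H$ is two-divisible. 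Conversely, if $H$ is two-divisible then $1/2\in H$, and the maximum formula for $r(0)$ evaluates to $(1/2,0)=v/2$, whence $r(0)^-=v-v/2=r(0)$ and $r$ is strict.

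Finally, the dichotomy is immediate: $\mathbb{Z}$ is not two-divisible, while any two-divisible $H\ni1$ contains $1/2$ and hence is not $\mathbb{Z}$; and the two cases exhaust all possibilities, because if $H\ne\mathbb{Z}$ there is $h\in H\cap(0,1)$, and the relation $r((h,0))=((h+1)/2,0)$ above (which holds for an arbitrary square root) applied to $1-h$ and to $h$ gives $1-h/2\in H$ and $1-(1-h)/2\in H$, so $h/2,(1-h)/2\in H$ and $1/2=h/2+(1-h)/2\in H$, forcing two-divisibility. I expect the only genuine work to lie in the order bookkeeping inside the lexicographic product—confirming that $r(0)$ really is the stated maximum and that $G^+$ has no greatest element unless $G$ is trivial—rather than in any conceptual difficulty; the backbone of the argument is simply the two evaluations of $\max\{y\in M\colon 2y\le v\}$.
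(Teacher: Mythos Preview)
Your proof is correct and takes a genuinely different, more elementary route than the paper. The paper first passes to the quotient by the normal maximal ideal $I=\{0\}\times G^+$, obtains an induced square root on $M/I\cong\Gamma(H,1)$, invokes the chain classification \cite[Cor 4.5]{DvZa3} to see that this induced root is either the identity or strict, and then (for the forward direction of (ii)) appeals to Theorem~\ref{8.1} to conclude that $H$ is two-divisible. You instead work entirely inside the lexicographic product: the single observation $r(0)=\max\{y\in M:2y\le v\}$ handles (i) and the converse of (ii) directly, while the explicit computation $r((h,0))=((h+1)/2,0)$ for $h\in(0,1)$ (valid for any square root, strict or not) yields both the forward direction of (ii) and the exhaustiveness of the dichotomy without ever leaving $M$. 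Your argument for the converse of (i) is also more careful than the paper's, which asserts ``$|G|=1$ implies $|M|=2$'' without using the hypothesis $r(0)=0$; you correctly deduce from $r(0)=0$ that $M$ is a Boolean chain, hence two-element, hence $H\cong\mathbb Z$. The trade-off is that the paper's structural approach makes the connection to the general theory (quotients, Theorem~\ref{8.1}) explicit, whereas your approach is self-contained but specific to the lexicographic setting. One small point you leave implicit in the converse of (ii): verifying that $(1/2,0)$ really is the maximum of $\{y:2y\le v\}$ uses that $2b\le 0$ implies $b\le 0$ in the $\ell$-group $G$, which is the standard fact the paper cites as \cite[Prop 3.6]{Dar}.
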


\begin{proof}
Suppose that $M$ has a square root $r$.
By \cite[Thm 3.4]{Dvu4}, $I:=\{0\}\times G^+$ is a normal and maximal ideal of $M$, and $M/I\cong \Gamma(H,1)$. Due to \cite[Prop 3.9]{DvZa3}, there is a square root $t:M/I\to M/I$ defined by $t(x/I)=r(x)/I$ for all $x\in M$.
Consequently, $\Gamma(H,1)$ has a square root $s$.
Since $\Gamma(H,1)$  is linearly ordered and symmetric, \cite[Thm 4.5]{DvZa3} implies $|\Gamma(H,1)|=2$ or
$t$ is strict.

(i) If $|\Gamma(H,1)|=2$ or equivalently $H\cong \mathbb Z$, then $M=(\{0\}\times G^+)\cup (\{1\}\times G^-)$.
Let $r((0,0))=(a,b)$, then $(0,0)=(a,b)\odot (a,b)=((2a,2b)-(1,0))\vee (0,0)=(2a-1,2b)\vee (0,0)$ and so $a=0$.
Clearly, for each $b\leq c\in G^+$ we have $(0,c)\odot (0,c)=(0,0)$, whence by (Sq2), $(0,c)\leq r((0,0))=(0,b)$.
Thus, $b$ is the top element of $G$.
In a similar way, we can show that $r((0,g))=(0,b)$, where $g\in G^+$. Injectivity of $r$ implies that $|G|=1$.
The proof of the converse is straightforward because $|G|=1$ implies that $|M|=2$. Hence, $M$ is a Boolean algebra, and so
$\id_M$ is the only square root on $M$.

(ii) Let $r$ be strict. By (i), $H$ is not isomorphic to $\mathbb Z$, so $2< |\Gamma(H,1)|$, and $s$ is strict.
Due to Theorem \ref{8.1}, $H$ is two-divisible.
Conversely, let $H$ be two-divisible. Then $1/2\in H$.
Thus $(1/2,0)\in M$. Clearly, $(1/2,0)\odot (1/2,0)=(0,0)$. For each $(a,b)\in M$, $(a,b)\odot (a,b)\leq (0,0)$ implies that
$(a,b)-(1,0)+(a,b)\leq (0,0)$ consequently, $(2a,2b)=2(a,b)\leq (1,0)=2(1/2,0)$.
If $2a< 1$, then clearly $(a,b)\leq (1/2,0)$. If $2a=1$, then $a=1/2$ and $2b\leq 0$. From \cite[Prop 3.6]{Dar}, it follows that $b\leq 0$ and so $(a,b)\leq (1/2,0)$. That is, $r((0,0))=(1/2,0)$.
Therefore, $r$ is strict.
\end{proof}

\begin{thm}\label{H-perfect2}
Let $M=\Gamma(H\lex G,(1,0))$ be a strongly $(H, 1)$-perfect pseudo MV-algebra such that $G$ enjoys unique extraction of roots and  $2<|M|$.
If $M$ has a square root, then $H$ and $G$ are two-divisible. In general, if $G$ does not enjoy unique extraction of roots, then for each
$g\in G$, the set $\{x\in G\mid 2x=g\}$ has a top element in $G$.
\end{thm}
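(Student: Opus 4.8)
The plan is to read the square root off the lexicographic coordinates, using the identity $r(x)=\max\{y\in M\mid y\odot y\le x\}$, which is immediate from (Sq1) and (Sq2) (compare Proposition~\ref{7.1}(vii)). Since the group operation on $H\lex G$ is componentwise and $a\odot b=(a-(1,0)+b)\vee(0,0)$ in $\Gamma(H\lex G,(1,0))$, every $y=(c,d)$ satisfies $y\odot y=(2c-1,2d)\vee(0,0)$. Thus computing $r$ reduces to a purely order-theoretic maximization in the lexicographic product, and the whole proof comes down to evaluating $r$ on suitable elements and invoking (Sq1).

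First I would secure the two-divisibility of $H$, which is needed before any useful coordinate can be named. Because $2<|M|$, Proposition~\ref{H-perfect1}(i) excludes $H\cong\mathbb Z$, so the chain $\Gamma(H,1)$ has more than two elements. By \cite[Thm 3.4]{Dvu4} the set $I=\{0\}\times G^+$ is a normal maximal ideal with $M/I\cong\Gamma(H,1)$, and by \cite[Prop 3.9]{DvZa3} the square root $r$ descends to a square root on $\Gamma(H,1)$; as this quotient is a symmetric chain with $|\Gamma(H,1)|>2$, \cite[Thm 4.5]{DvZa3} forces the induced root to be strict, and then Theorem~\ref{8.1} yields that $H$ is two-divisible. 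In particular $1/2\in H$ and hence also $3/4\in H$.

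The core of the argument is a single computation on the ``middle'' elements $(1/2,g)$, which belong to $M$ for \emph{every} $g\in G$ (since $0<1/2<1$); these, rather than the elements of $I$, are what let me reach an arbitrary $g$. Fixing $g$, I analyse $r\big((1/2,g)\big)=\max\{(c,d)\in M\mid (2c-1,2d)\vee(0,0)\le(1/2,g)\}$ in the lexicographic order: candidates with $c>3/4$ are excluded by their first coordinate, those with $c<3/4$ are dominated, and the decisive layer is $c=3/4$ (here $3/4\in H$ is used), on which the constraint is precisely $2d\le g$. Since $\{d\in G\mid 2d\le g\}$ is nonempty (e.g.\ $d=g\wedge 0$ works) and $r\big((1/2,g)\big)$ exists, the maximum $d^{*}=\max\{d\in G\mid 2d\le g\}$ exists and $r\big((1/2,g)\big)=(3/4,d^{*})$.

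Finally I would apply (Sq1): from $(3/4,d^{*})\odot(3/4,d^{*})=(1/2,2d^{*})\vee(0,0)=(1/2,2d^{*})$ equalling $(1/2,g)$ one reads off $2d^{*}=g$. Hence $d^{*}$ solves $2x=g$; and since every solution lies in $\{d\mid 2d\le g\}$ and is therefore $\le d^{*}$, the set $\{x\in G\mid 2x=g\}$ has $d^{*}$ as its top element, which in particular shows $G$ is two-divisible. When $G$ enjoys unique extraction of roots this top element is the unique half $g/2$, so the clean statement ``$G$ is two-divisible'' results, while dropping that hypothesis leaves exactly the stated top-element conclusion. The main obstacle I anticipate is the order analysis pinning down $r((1/2,g))$ exactly — in particular justifying that the maximizing layer is $c=3/4$, which is precisely why the two-divisibility of $H$ must be established first — together with keeping the bookkeeping correct for a possibly non-abelian $G$ lacking unique roots, where one must argue with the maximum of $\{d\mid 2d\le g\}$ rather than with a presumed unique $g/2$.
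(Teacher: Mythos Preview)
Your proposal is correct and follows essentially the same route as the paper: both arguments first secure the two-divisibility of $H$ (you via the quotient $M/I\cong\Gamma(H,1)$ and Theorem~\ref{8.1}, the paper by invoking Proposition~\ref{H-perfect1} which packages the same reasoning), and then both evaluate $r((1/2,g))$, using (Sq1) to force the first coordinate to be $3/4$ and the second to satisfy $2d=g$, with (Sq2) giving the maximality of $d$ among solutions. Your framing via the explicit maximization $r(x)=\max\{y\mid y\odot y\le x\}$ and the layer analysis at $c=3/4$ is a slightly different presentation of the same computation, but the substance is identical.
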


\begin{proof}
Suppose that $r$ is a square root on $M$. Since $2<|M|$, by Proposition \ref{H-perfect1}, $r$ is strict and $H$ is two-divisible, so $1/2\in H$. Choose $g\in G$.
Then $(1/2,g)\in M$. Let $r((1/2,g))=(a,b)\in M$. Then
\begin{eqnarray*}
(1/2,g)=(a,b)\odot (a,b)=\big((a,b)-(1,0)+(a,b)\big)\vee (0,0)= (2a-1,2b)\vee (0,0).
\end{eqnarray*}
If $2a-1<0$, then $(2a-1,2b)\vee (0,0)=(0,0)\neq (1/2,g)$ and if $2a-1=0$, then $(2a-1,2b)\vee (0,0)=(0,2b)\neq (1/2,g)$.
It follows that $0<2a-1$ and so $(2a-1,2b)\vee (0,0)=(2a-1,2b)$ that implies $2a-1=1/2$ and $2b=g$. That is $a=3/4$, $2b=g$, and
$G$ is two-divisible. Since $G$ enjoys unique extraction of roots, we have $b=g/2$ and so $r((1/2,g))=(3/4,g/2)$.

Now, let $G$ not enjoy the unique extraction of roots. Choose $g\in G$. By the first step of the proof, we know that
$\{x\in G\mid 2x=g\}\neq\emptyset$ and $r(1/2,g)=(3/4,d)$ for some $d\in G$ with $2d=g$.
If $x\in G$ such that $2x=g$, then $(3/4,x)\odot (3/4,x)=(1/2,2x)=(1/2,g)$, so $(3/4,x)\leq (3/4,d)$ which implies that $x\leq d$.
Therefore, $d=\max\{x\in G\mid 2x=g\}$.
\end{proof}

\begin{cor}\label{H-perfect3}
Let $M=\Gamma(H\lex G,(1,0))$ be a strongly $(H,1)$-perfect pseudo MV-algebra with a square root $r$, and let $G$ enjoy unique extraction of roots.
Then
\begin{eqnarray}
r((x,y))=\big(\frac{x+1}{2},\frac{y}{2}\big), \quad\forall (x,y)\in M\setminus\{(a,b)\in M\mid a\neq 0\}.
\end{eqnarray}
\end{cor}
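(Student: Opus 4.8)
The plan is to read off $r$ on the bottom block $M\setminus\{(a,b)\in M\mid a\neq 0\}=\{(0,y)\mid y\in G^+\}$ directly from the two square-root axioms, using that in $\Gamma(H\lex G,(1,0))$ the doubling map is explicit: for $(a,b)\in M$ one has $(a,b)\odot(a,b)=(2a-1,2b)\vee(0,0)$, the join being taken in the lexicographic order of $H\lex G$. First I would dispose of the degenerate case: the right-hand side $(\frac{x+1}{2},\frac{y}{2})$ presupposes $1/2\in H$ and $y/2\in G$, so I may assume $2<|M|$. Then Proposition \ref{H-perfect1}(i) excludes $H\cong\mathbb Z$, and Proposition \ref{H-perfect1}(ii) together with Theorem \ref{H-perfect2} show that $r$ is strict and that both $H$ and $G$ are two-divisible; in particular $(1/2,d)\in M$ for every $d\in G$, since its first coordinate $1/2$ lies strictly between $0$ and $1$. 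Throughout I would use the characterization $r(z)=\max\{w\in M\mid w\odot w\le z\}$, which is immediate from (Sq1) and (Sq2).

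Next, fix $y\in G^+$ and determine the maximum $w=(c,d)$ with $(c,d)\odot(c,d)\le(0,y)$ by a case analysis on the sign of $2c-1$ in the chain $H$. If $2c-1>0$, the join $(2c-1,2d)\vee(0,0)$ equals $(2c-1,2d)$ and has strictly positive first coordinate, so it cannot be $\le(0,y)$; hence $c\le 1/2$. Every $w$ with $c<1/2$ gives $(c,d)\odot(c,d)=(0,0)\le(0,y)$ but is dominated in the lexicographic order by any element with first coordinate $1/2$; so the maximum is attained with $c=1/2$. For $c=1/2$ the square is $(0,2d\vee 0)$, and the constraint $(0,2d\vee 0)\le(0,y)$ reads $2d\le y$ (recall $y\ge 0$). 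Consequently $r((0,y))=(1/2,d^{*})$, where $d^{*}=\max\{d\in G\mid 2d\le y\}$; this maximum exists because it is realized by the second coordinate of the already-existing element $r((0,y))$.

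It remains to identify $d^{*}$ with $y/2$, and here is the only genuinely delicate point: in a possibly non-commutative $\ell$-group the naive order-reflection $2d\le 2(y/2)\Rightarrow d\le y/2$ is not available, so I would argue by a squeeze rather than by cancelling the factor $2$. On one hand $(1/2,y/2)\odot(1/2,y/2)=(0,y\vee 0)=(0,y)$, so $(1/2,y/2)\le r((0,y))=(1/2,d^{*})$ by (Sq2), whence $y/2\le d^{*}$ and therefore $y=2(y/2)\le 2d^{*}$ by translation invariance of the order. On the other hand $d^{*}$ satisfies $2d^{*}\le y$ by construction. Thus $2d^{*}=y=2(y/2)$, and since $G$ enjoys unique extraction of roots we conclude $d^{*}=y/2$, giving $r((0,y))=(1/2,y/2)=(\frac{0+1}{2},\frac{y}{2})$. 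The boundary value $y=0$ is covered by the same computation (there $d^{*}=\max\{d\mid 2d\le 0\}=0$ by the standard $\ell$-group fact that $2d\le 0$ forces $d\le 0$), in agreement with the strictness $r(0)=(1/2,0)$; and running the identical analysis with $2c-1$ matched against an arbitrary $x\in H$ with $0\le x\le 1$ reproduces the uniform expression on all of $M$, matching the value $r((1/2,g))=(3/4,g/2)$ found in Theorem \ref{H-perfect2}.
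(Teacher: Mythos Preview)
Your proof is correct and rests on the same computational core as the paper's: both analyze $(c,d)\odot(c,d)=(2c-1,2d)\vee(0,0)$ and split cases on the sign of $2c-1$ in the chain $H$. The paper, however, works directly from the \emph{equation} (Sq1): it sets $r((x,y))=(a,b)$, obtains $(2a-1,2b)\vee(0,0)=(x,y)$, and in the case $x=0<y$ reads off $2a-1=0$ and $2b\vee 0=y$, then asserts $b=y/2$. Your route via the max characterization $r(z)=\max\{w\mid w\odot w\le z\}$ and the squeeze $y/2\le d^{*}$, $2d^{*}\le y$, hence $2d^{*}=y$, is slightly longer but actually supplies a step the paper glosses over: when $G$ is not a chain, the relation $2b\vee 0=y$ by itself does \emph{not} force $2b=y$ (there may be solutions with $2b\not\ge 0$), so (Sq2) is genuinely needed to single out the correct second coordinate, and your squeeze is exactly what does this. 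You are also more explicit than the paper about disposing of the degenerate case $|M|\le 2$.
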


\begin{proof}
Choose $(x,y)\in M$. Let $r((x,y))=(a,b)$. Then $(2a-1,2b)\vee (0,0)=(x,y)$.

(i) If $x=0=y$ then there are two cases, $2a-1<0$ or $2a-1=0=2b$.
By Proposition \ref{H-perfect1},
$r((x,y))=(1/2,0)=((x+1)/2,y/2)$.

(ii) If $x=0$ and $0<y$. Then $(x,y)=(2a-1,2b)\vee (0,0)=(0,2b\vee 0)$. So, $2a-1=x=0$ and $2b\vee 0=y$, which means
 $r((x,y))=((x+1)/2,y/2)=(1/2,y/2)$.

$2a-1=0$, $2b\geq 0$ and $(2a-1,2b)\vee (0,0)=(2a-1,2b)$. It follows that $a=(x+1)/2$ and $b=y/2$. That is, $r(x,y)=((x+1)/2,y/2)$.

(iii) If $x>0$, then  $(2a-1,2b)=(2a-1,2b)\vee (0,0)=(x,y)$ and so $a=(x+1)/2$ and $b=y/2$.  That is, $r(x,y)=((x+1)/2,y/2)$.
\end{proof}

\end{document}